\documentclass[a4paper,11pt,oneside,reqno]{amsart}
\usepackage{amscd,amsfonts,amssymb,amsmath,amsthm,bbm}
\usepackage{graphicx,psfrag}
\usepackage[english]{babel}
\usepackage[headinclude,DIV13]{typearea}
\areaset{15.1cm}{25.0cm}
\parskip 0pt plus .5pt
\overfullrule 0mm

\usepackage[colorlinks=true,
 linkcolor=blue,
 citecolor=blue,
]{hyperref}

\usepackage[T1]{fontenc}
\usepackage{mathrsfs}
\usepackage{amsmath,amsthm,amssymb}
\usepackage{latexsym}
\usepackage{enumerate}
\usepackage{mathrsfs}
\usepackage{stmaryrd}
\usepackage{amsopn}
\usepackage{amsmath}
\usepackage{amssymb}
\usepackage{amsfonts}
\usepackage{soul}
\usepackage{amsbsy}
\usepackage{amscd,indentfirst,epsfig}
\usepackage{amsfonts,amsmath,latexsym,amssymb,verbatim,amsbsy}
\usepackage{amsthm}
\usepackage{colordvi}
\usepackage{pstricks}
\parindent=5mm
\setlength{\oddsidemargin}{.5cm} \setlength{\evensidemargin}{.5cm}
\setlength{\textwidth}{15.0cm} \setlength{\textheight}{19.5cm}
\setlength{\topmargin}{1cm} \baselineskip = 18pt
\baselineskip=2\baselineskip 

\theoremstyle{plain}
\newtheorem{theo}{Theorem}[section]
\newtheorem{lemme}[theo]{Lemma}

\newtheorem{propo}[theo]{Proposition}

\newtheorem{nb}[theo]{Remark}
\newtheorem{defi}[theo]{Definition}
\theoremstyle{definition}

\def \leq {\leqslant}
\def \geq {\geqslant}

\numberwithin{equation}{section}
\def\ind#1{\lower5pt\hbox{$\scriptstyle #1$}}


\def \d {\,\mathrm{d} }
\def \L {\mathbf{L}}

\def \H {\mathcal{H}}

\def \ds {\displaystyle}

\def \D {\mathscr{D}}

\def \ds {\displaystyle}
\def\Q {\mathcal{Q}}

\def\R{{\mathbb R}}
\def\C{{\mathbb C}}
\def \S {{\mathbb S}^2}

\def \u {{u} }
\def \v {{v}}

\def \w {{w}}

\def \fe {F_\alpha}

\def \M {\mathcal{M}}


\def \IS {\int_{\S}}
\def \IR {\int_{\R^3}}
\def \IRR {\int_{\R^3 \times \R^3}}

\def \B {\mathcal{B}}




\def\Re{\operatorname{Re}}
\def\dist{\operatorname{dist}}


\title[Stability of the steady state ]
{{Exponential trend to equilibrium for the inelastic Boltzmann equation
driven by a particle  bath }}

\author{Jos\'{e}  A. Ca\~{n}izo \& Bertrand Lods }

\address{\textbf{Jos\'{e} A.  Ca\~{n}izo},  Departamento de Matem\'{a}tica
  Aplicada, Universidad de Granada, Av. Fuentenueva S/N, 18071
  Granada, Spain.}
\email{canizo@ugr.es}

\address{\textbf{Bertrand Lods}, Dipartimento di Statistica e Matematica Applicata \& Collegio Carlo Alberto, Universit\`{a} degli
Studi di Torino,  Corso Unione Sovietica, 218/bis, 10134 Torino, Italy.}\email{lods@econ.unito.it}

\hyphenation{bounda-ry rea-so-na-ble be-ha-vior pro-per-ties
cha-rac-te-ris-tic  coer-ci-vity}
\begin{document}

\maketitle

\begin{abstract}
  We consider the spatially homogeneous Boltzmann equation for
  inelastic hard spheres (with constant restitution coefficient
  $\alpha \in (0,1)$) under the thermalization induced by a host
  medium with a fixed Maxwellian distribution. We prove that the
  solution to the associated initial-value problem converges
  exponentially fast towards the unique equilibrium solution. The
  proof combines a careful spectral analysis of the linearised
  semigroup as well as entropy estimates. The trend towards
  equilibrium holds in the weakly inelastic regime in which $\alpha$
  is close to $1$, and the rate of convergence is explicit and
  depends solely on the spectral gap of the \emph{elastic} linearised
  collision operator.
\end{abstract}

\tableofcontents
\section{Introduction}
\setcounter{equation}{0}

We pursue our investigation initiated in \cite{BiCaLo,BCL} of the
qualitative properties of inelastic hard spheres suspended in a
thermal medium. In a more precise way, we investigate here the large
time behavior of the one-particle distribution function \(f(v,t)\),
\(v\in\R^3\),
\(t>0\)
solution to the following spatially homogeneous Boltzmann equation:
\begin{equation}
\label{be:force} \partial_t f  =  \Q_\alpha(f,f) + \L(f),
\end{equation}
where $\Q_\alpha(f,f)$ is the inelastic quadratic Boltzmann collision
operator, while $\L(f)$ models the forcing term. The parameter
$\alpha$ is the \emph{restitution coefficient}, expressing the degree
of inelasticity of binary collisions between grains:
$0< \alpha \leq 1$, and the purely elastic case is recovered when
$\alpha = 1$.
 
\subsection{Setting of the problem}

As is well documented, dilute granular flows can be described by
kinetic models associated to suitable modifications of the Boltzmann
operator for which hard-sphere collisions are assumed to be inelastic
\cite{Br}: each encounter dissipates a fraction of the kinetic
energy. In absence of energy supply the system cools down and the
corresponding dissipative Boltzmann equation admits only trivial
equilibria. This is no longer the case if the spheres are forced to
interact with an external thermostat, in which case the energy supply
may lead to a non trivial steady state. Different kinds of forcing
term have been considered in the literature \cite{ernstbrito,
  noije,MiMo,MiMo2,MiMo3,GPV**}, and the qualitative properties of the
corresponding steady states have been investigated. In particular, the
following questions have been addressed regarding steady solutions of
kinetic equations of the type \eqref{be:force}: (1) Existence
\cite{MiMo, GPV**}, (2) Uniqueness in some weakly inelastic regime
corresponding to $\alpha$ close to $1$ \cite{MiMo2,MiMo3} and (3)
stability, i.e. convergence of the solution to the associated
Boltzmann equation towards the steady state \cite{MiMo2,MiMo3}. In
particular, for hard spheres subject to diffuse forcing, the
large-time behaviour of the solution to the BE has been completely
characterised in \cite{MiMo3} whereas, for anti-drift forcing (closely
related to self-similar solutions to the freely evolving Boltzmann
equation), asymptotic behaviour has been considered in
\cite{MiMo2}.

In this paper we are concerned with the asymptotic behaviour of a
physical model in which the system of inelastic hard spheres is
immersed in a thermal bath of particles at equilibrium, already
investigated in \cite{BiCaLo,BCL}. In this model the forcing term is
given by a linear scattering operator describing \emph{elastic
  collisions} with the background medium:
$$\L (f) = \Q_{1}(f, \M)$$
where $\M$ stands for the distribution function of the host fluid,
supposed to be a Maxwellian with unit mass, bulk
velocity~$u_0 \in \R^{3}$ and temperature $\Theta_0>0$:
\begin{equation}\label{maxwe1}
\M(\v)=\bigg(\dfrac{1}{2\pi
\Theta_0 }\bigg)^{3/2}\exp
\left\{-\dfrac{(\v-\u_0 )^2}{2\Theta_0 }\right\}, \qquad \qquad \v
\in \R^3.
\end{equation}
The precise definitions of the collision operators $\Q_\alpha(f,f)$
and $\L (f)$ are given in Subsection~2.1. We refer to
\cite{MiMo,MiMo2,MiMo3,GPV**} for a mathematical discussion of various
models and their physical motivation. We restrict our attention to
interacting hard spheres and refer to \cite{CarrTo} for exhaustive
references on the pseudo-Maxwell approximation. A salient feature of
both collision operators $\Q_{\alpha}(f,f)$ and $\L$ is that their
only collision invariant is mass, i.e.
$$\IR \Q_{\alpha}(f,f)\d v=\IR \L f\d v=0.$$
In contrast with the elastic Boltzmann operator, neither the momentum
nor the energy are conserved by $\Q_{\alpha}$ or $\L$.

The existence of smooth stationary solutions for the inelastic
Boltzmann equation under the above thermalization has already been
proved in~\cite{BiCaLo}, for any choice of the restitution
coefficient~$\alpha$. This has been achieved by controlling the
$L^p$-norms, the moments and the regularity of the solutions for the
Cauchy problem, together with a dynamical argument based on the
Tychonoff fixed-point theorem.

Uniqueness of the steady state is proven in some previous contribution
\cite{BCL} for a smaller range of parameters $\alpha$. Namely, the
main results of both \cite{BiCaLo} and \cite{BCL} can be summarized as
follows:
\begin{theo}[Existence and Uniqueness of the steady
  state]\label{theo:ex-uni}
  For any $\varrho \geq 0$ and $\alpha \in (0,1]$, there exists a
  steady solution $F_{\alpha} \in L^{1}_{2}(\R^{3})$,
  $F_{\alpha}(v) \geq 0$ to the problem
  \begin{equation}\label{equi}
    \Q_{\alpha}(F_{\alpha},F_{\alpha}) +\L(F_{\alpha})=0
  \end{equation}
  with $\ds\IR F_{\alpha}(v)\d v=\varrho.$ 

  Moreover, there exists $\alpha_0 \in (0,1]$ such that such a
  solution is unique for $\alpha \in (\alpha_0,1]$. The (unique)
  steady state $F_\alpha$ for $\alpha \in (\alpha_0,1]$ is radially
  symmetric and belongs to $\mathcal{C}^\infty(\R^3)$.
\end{theo}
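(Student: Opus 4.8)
The plan is to handle the two assertions by quite different tools: existence through a dynamical fixed-point argument for the evolution \eqref{be:force}, and uniqueness through a perturbative analysis around the elastic Maxwellian. For existence I would first set up the Cauchy problem for \eqref{be:force} and construct the associated nonlinear solution flow $S_t$ on a weighted space such as $L^1_2(\R^3)$. The idea is then to realize a stationary solution as a fixed point of this flow. Since the only collision invariant of both $\Q_\alpha$ and $\L$ is mass, every solution conserves $\IR f \d v$, so the constraint $\IR F_\alpha \d v=\varrho$ is automatically propagated and one may restrict attention to the affine slice of fixed mass $\varrho$ (the case $\varrho=0$ being trivial). The heart of the matter is to exhibit a convex set $\mathcal{C}$ in this slice that is invariant under $S_t$ and compact in a topology for which the time-$t$ map is continuous; the Tychonoff fixed-point theorem then produces a fixed point of $S_t$ for suitable $t>0$, which a standard argument promotes to a genuine stationary solution, nonnegativity being preserved along the flow.

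Building $\mathcal{C}$ is the analytic core of the existence part and rests on uniform-in-time a priori estimates. First I would derive moment bounds: the inelastic operator $\Q_\alpha$ dissipates kinetic energy while the bath term $\L(f)=\Q_1(f,\M)$ acts as a thermalizing restoring force, so the competition between the two yields a uniform bound on $\IR |v|^s f \d v$ for the relevant moments. Next I would propagate the $L^p$ norms together with higher moments; combined, these give uniform integrability, and by the Dunford--Pettis theorem they furnish weak-$L^1$ compactness of a convex, closed, invariant $\mathcal{C}$. This is exactly the package of controls on $L^p$-norms, moments and regularity of the Cauchy problem alluded to above.

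For uniqueness the natural approach is perturbative in the weakly inelastic regime. At $\alpha=1$ the stationary equation reads $\Q_1(F,F)+\Q_1(F,\M)=0$, which is solved by $F=\M$ (normalized to mass $\varrho$), since the Maxwellian annihilates the elastic operator. The first step is to show, using the uniform estimates above, that any family of steady states $F_\alpha$ converges to $\M$ as $\alpha\to 1$ in a sufficiently strong norm. Writing $F_\alpha=\M+h$ and linearizing, the linear part is governed by the elastic linearized collision operator together with the linearized bath term; because only mass is conserved, this operator is coercive on the complement of the one-dimensional mass mode, with a spectral gap inherited from the \emph{elastic} linearized collision operator. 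Inverting the linear part on that complement and absorbing the quadratic remainder $\Q_\alpha(h,h)$, one sets up a contraction on a small ball, which forces uniqueness of $F_\alpha$ for $\alpha$ in some interval $(\alpha_0,1]$.

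The remaining qualitative properties then come essentially for free. When the bath is at rest ($u_0=0$) the full problem is invariant under the rotation group, so any rotation of a steady state is again a steady state, and uniqueness forces $F_\alpha$ to be rotation invariant, hence radially symmetric; smoothness follows from a bootstrap on the stationary equation, exploiting the regularizing effect of the gain part of $\Q_\alpha$ together with the already-established moment and $L^p$ bounds to place $F_\alpha$ in every weighted Sobolev space, whence $F_\alpha\in\mathcal{C}^\infty(\R^3)$. The main obstacle is the uniqueness step: both the convergence $F_\alpha\to\M$ and the contraction require quantitative control of the nonlinearity by the spectral gap in norms compatible with the a priori estimates, and it is precisely here that the restriction to $\alpha$ near $1$ is genuinely needed.
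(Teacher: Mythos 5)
Your outline reproduces the strategy that the paper attributes to its antecedents \cite{BiCaLo} and \cite{BCL}, where this theorem is actually proved: existence via uniform control of moments, $L^p$-norms and regularity for the Cauchy problem combined with a Tychonoff fixed-point argument on the evolution flow, and uniqueness in a weakly inelastic range $(\alpha_0,1]$ via a perturbative contraction around the elastic Maxwellian $\M$ exploiting the spectral gap of the elastic linearised operator, with radial symmetry and smoothness obtained exactly as you describe. The only small caveat is that your symmetry argument does not need $u_0=0$: the equation is invariant under rotations of velocity space centred at the bulk velocity $u_0$ of the bath, so uniqueness forces symmetry of $F_\alpha$ about $u_0$ in general.
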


We assume in the sequel that $\alpha \in (\alpha_{0},1]$ and we are
interested in the large-time behaviour of the solution $f(t,v)$ to
\eqref{be:force} (whose existence and uniqueness are guaranteed by
\cite{BiCaLo}). In particular, if one assumes
\begin{equation}\label{mass}
  \IR f_{0}(v)\d v =1,
\end{equation}
then one expects the solution $f(t,v)$ to converge towards the unique
steady solution with unit mass given in Theorem \ref{theo:ex-uni}. Our
goal in the present paper is to provide sufficient conditions on the
initial datum $f_{0}$ ensuring that this convergence holds true, with
an exponential rate that we make explicit. As in \cite{MiMo2,MiMo3},
this exponential trend to equilibrium is proven to hold in the weakly
inelastic regime, i.e. for a range of parameters
$\alpha \in (\alpha^{\dagger},1]$ for a certain explicit
$\alpha^{\dagger} > \alpha_{0}.$

\subsection{Main result and strategy of proof}

Our goal is to prove a quantitative version of the return to
equilibrium for the solution to \eqref{be:force}. Our main result
combines local stability estimates in a certain weighted $L^{1}$-space
with suitable entropy estimates. A crucial point in our approach is
that it strongly relies on the understanding of the elastic problem
corresponding to $\alpha=1.$ In the elastic case, as is well known,
$F_{1}=\M$ is exactly the host-medium Maxwellian appearing in
$\L$. The spectral properties of the linearised operator around $\M$
have been studied in \cite{BCL}. Namely, in the weighted space
$$\mathcal{X}=L^{1}(\R^{3},\exp(a|v|)\d v), \qquad a > 0,$$
the elastic linearised operator $\mathscr{L}_{1}$ given by
\begin{equation*}
  \mathscr{L}_{1}h
  = \Q_{1}(h,\M)+\Q_{1}(\M,h)+\L h,
  \qquad h \in L^{1}\big( \R^{3};\,(1+|v|)\exp(a|v|)\d v \big)
\end{equation*}
admits a positive spectral gap $\nu > 0$ which can be explicitly
estimated. We shall consider solutions to \eqref{be:force} associated
to a nonnegative initial datum $f_{0} \in \mathcal{X}$ satisfying
\eqref{mass} and
\begin{equation}
  \label{eq:Hf0M}
  H(f_{0}| \M) =
  \IR f_{0}(v)\log\left(\dfrac{f_{0}(v)}{ \M(v)}\right)
  \d v < \infty.
\end{equation}
Our main result can be stated as follows:
\begin{theo}\label{theo:main}
  For any $0 < \nu_{*} < \nu$ (with $\nu$ equal to the size of the
  spectral gap of $\mathscr{L}_1$) there exists some explicit
  $\alpha^{\dagger} \in (0,1)$ such that, for any
  $\alpha \in (\alpha^{\dagger},1]$ and any nonnegative initial datum
  with $f_{0} \in \mathcal{X}$ satisfying \eqref{mass} and
  \eqref{eq:Hf0M} the solution $f(t,v)$ to \eqref{be:force} satisfies
$$\|f(t)-\fe\|_{\mathcal{X}}\leq K\exp\left(-\nu_{*} t\right) \qquad \forall t \geq 0$$
for some positive constant $K$  depending on $\varepsilon, \alpha$ and $H(f_{0}|\M)$. 
\end{theo}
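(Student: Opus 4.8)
The plan is to split the problem into a \emph{global} entropic step and a \emph{local} spectral step, glued together at a finite time. Writing $f = \fe + h$ and using the stationary equation \eqref{equi} together with the bilinearity of $\Q_\alpha$, the perturbation solves
$$\partial_t h = \mathscr{L}_\alpha h + \Q_\alpha(h,h), \qquad \mathscr{L}_\alpha h := \Q_\alpha(\fe,h)+\Q_\alpha(h,\fe)+\L h,$$
and, since mass is the only collision invariant, $h(t)$ remains in the closed hyperplane $\mathcal{X}_0 = \{g \in \mathcal{X} : \IR g \d v = 0\}$. Throughout I would work with the mild (Duhamel) formulation
$$h(t) = S_\alpha(t)h_0 + \int_0^t S_\alpha(t-s)\,\Q_\alpha(h(s),h(s))\,\d s,$$
where $(S_\alpha(t))_{t\ge0}$ is the semigroup generated by $\mathscr{L}_\alpha$ on $\mathcal{X}$.

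First I would establish the local linear decay. The operator $\mathscr{L}_1$ has spectral gap $\nu$ on $\mathcal{X}_0$ by \cite{BCL}, and I would treat $\mathscr{L}_\alpha$ as a perturbation of it, proving $\|\mathscr{L}_\alpha - \mathscr{L}_1\|_{\mathcal{X}_1 \to \mathcal{X}} \to 0$ as $\alpha \to 1$, where $\mathcal{X}_1 = L^1(\R^3,(1+|v|)\exp(a|v|)\d v)$. This rests on two closeness estimates: $\|\fe - \M\|_{\mathcal{X}_1} \to 0$ (continuity of the steady state as $\alpha \uparrow 1$, controlled through the uniform moment and regularity bounds of \cite{BiCaLo}) and the convergence $\Q_\alpha \to \Q_1$ in the relevant weighted norm. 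Since the continuous spectrum of these operators stems from the collision-frequency (multiplication) part, which is unchanged to leading order, the gap is stable under such relatively bounded perturbations; invoking a spectral-mapping/enlargement argument I would conclude that for every $\nu_* < \nu$ there is $\alpha^\dagger<1$ so that, for $\alpha \in (\alpha^\dagger,1]$,
$$\|S_\alpha(t) g\|_{\mathcal{X}} \le C\,e^{-\nu_* t}\,\|g\|_{\mathcal{X}}, \qquad g \in \mathcal{X}_0,\ t\ge0.$$

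Next comes the nonlinear local stability. The bilinear operator satisfies $\|\Q_\alpha(g,h)\|_{\mathcal{X}} \lesssim \|g\|_{\mathcal{X}_1}\|h\|_{\mathcal{X}_1}$, so the quadratic term costs one velocity weight (the hard-sphere loss of a moment). To absorb this I would run the Duhamel estimate in the stronger norm $\mathcal{X}_1$, using that $(S_\alpha(t))$ still contracts in $\mathcal{X}_1$ with rate $\nu_*$ (the gap being robust across this scale of spaces). A bootstrap argument then produces $\delta>0$ such that $\|h_0\|_{\mathcal{X}_1} < \delta$ implies $\|h(t)\|_{\mathcal{X}} \le C' e^{-\nu_* t}$ for all $t \ge 0$: one posits the decay on a maximal interval, feeds it into the Duhamel integral, and checks that the quadratic contribution is of order $\delta$ times the linear one, hence negligible once $\delta$ is small.

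It remains to drive an arbitrary admissible datum into this small ball, and this is where the entropy enters. Using $H(f_0|\M)<\infty$ from \eqref{eq:Hf0M}, and combining the entropy dissipation produced by the nearly elastic dynamics with the uniform-in-time $L^p$, moment and smoothness bounds of \cite{BiCaLo}, one shows $f(t)\to\fe$ in $\mathcal{X}_1$; quantitatively, there is a time $T_0=T_0(H(f_0|\M),\alpha)$ after which $\|f(T_0)-\fe\|_{\mathcal{X}_1}<\delta$. Applying the local result with initial time $T_0$ and bounding $f$ on $[0,T_0]$ by the uniform estimates yields the stated bound, with $K$ depending on $H(f_0|\M)$ and $\alpha$. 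I expect this entropic entry to be the main obstacle: because $\fe$ is not a Maxwellian and neither $\Q_\alpha$ nor $\L$ enjoys a clean $H$-theorem, one cannot dissipate $H(\cdot|\fe)$ directly, so the quantitative control must be extracted by treating the inelastic correction as a small, entropy-production-controlled perturbation of the elastic relative entropy $H(\cdot|\M)$, and rendering the entry time $T_0$ effective is the delicate point on which the explicit constant $K$ ultimately rests.
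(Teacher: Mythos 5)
Your overall architecture is exactly the paper's: a local stability theorem obtained by perturbing the elastic linearised operator $\mathscr{L}_1$ and running Duhamel plus Gronwall, glued at a finite entry time to a global entropy estimate for $H(\cdot|\M)$ in which the inelastic correction is treated as an $O(1-\alpha)$ perturbation. However, two of your steps would not close as written. First, the passage from the spectral gap of $\mathscr{L}_\alpha$ to exponential decay of the semigroup $(\mathcal{S}_\alpha(t))_{t\ge0}$ on the zero-mass hyperplane is the technical heart of the argument and cannot be dispatched by ``invoking a spectral-mapping argument'': there is no spectral mapping theorem in $L^1$. The paper handles this by splitting $\mathscr{L}_\alpha=\mathcal{A}_\alpha+\mathcal{B}_\alpha$ with $\mathcal{B}_\alpha$ dissipative (with explicit rate $-\beta_\star$) and $\mathcal{A}_\alpha$ \emph{weakly compact} from $\mathcal{X}$ into a weighted $L^2$ space, then uses Duhamel together with the stability of the Schechter essential spectrum under weakly compact perturbations in $L^1$ to get $\omega_{\mathrm{ess}}(\mathcal{S}_\alpha)\le-\beta_\star$, after which the decay outside the eigenvalue $0$ follows from finite-dimensional spectral theory. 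Some such localisation of the essential type is indispensable.

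Second, your plan to run the Duhamel estimate ``in the stronger norm $\mathcal{Y}$'' so as to absorb the moment loss of $\Q_\alpha$ leads to a ladder that does not terminate: controlling $\|\Q_\alpha(h,h)\|_{\mathcal{Y}}$ requires $h\in L^1_2(m^{-1})$, one more weight again, and the claim that the semigroup contracts in $\mathcal{Y}$ with the same rate is not established. The paper instead closes the loop \emph{in} $\mathcal{X}$ via the interpolation $\|h(t)\|_{\mathcal{Y}}\le C\|h(t)\|_{\mathcal{X}}^{1-\varepsilon}$, valid along the nonlinear flow thanks to the uniform-in-time propagation of exponential tails (Theorem \ref{tailT}); this turns the quadratic term into $\|h\|_{\mathcal{X}}^{3/2}$ and Gronwall applies. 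Finally, a small but important correction to your closing remark: the linear scattering operator $\L$ \emph{does} enjoy a clean quantitative $H$-theorem relative to $\M$, namely the entropy--entropy production inequality $\mathbf{D}(f)\ge\lambda H(f|\M)$ of Theorem \ref{theo:bcl1}; this external input is precisely what makes the entry time $T_0$ effective, and without it your entropic step has no quantitative engine.
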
 

Notice that the rate of convergence is explicitly computable in terms
of the spectral gap of the \emph{elastic} linearised operator
$\mathscr{L}_{1}$ in $\mathcal{X}.$

The proof of the above is based upon two main ingredients:
\begin{enumerate}
\item \emph{A local stability estimate} in which exponential
  convergence is established for small perturbations of the equilibrium
  state, i.e. whenever the initial datum $f_{0}$ is close enough to
  $ \fe.$
\item Suitable \emph{entropy estimates} as a tool to pass from local
  to global stability. We take advantage of the fact that the
  scattering operator $\L$ is dominant in the weakly inelastic
  regime.
\end{enumerate}

To tackle the above first point (1), we have to perform a fine study
of the spectral properties of both the linearised operator
$\mathscr{L}_{\alpha}$ around the steady solution $\fe$ and its
associated evolution semigroup. More precisely, introduce
\begin{equation*}
\mathscr{L}_{\alpha}h=\Q_{\alpha}(h,\fe)+\Q_{\alpha}(\fe,h)+\L h,
\qquad h \in L^{1}(\R^{3}\,;\,(1+|v|)\exp(a|v|)\d v), \alpha \in
(\alpha_{0},1].
\end{equation*}
We deduce the spectral properties of $\mathscr{L}_{\alpha}$ in
$\mathcal{X}$ from those of the elastic operator $\mathscr{L}_{1}$ by
a perturbation argument valid for $\alpha$ close enough to $1$. Notice
that the elastic limit $\alpha \to 1$ is actually well behaved since
the operator gap (in the sense of \cite{kato}; see Appendix
\ref{sec:perturbation}) between $\mathscr{L}_{\alpha}$ and
$\mathscr{L}_{1}$ is going to $0$ as $\alpha \to 1$. This allows us to
apply results from the perturbation theory of unbounded operators
\cite{kato}. This strongly contrasts with the analysis of
\cite{MiMo2,MiMo3} which, though perturbative, was ill-behaved in the
elastic limit.

As is well known, the spectral properties of the $C_{0}$-semigroup
$(\mathcal{S}_{\alpha}(t))_{t \geq 0}$ generated by
$\mathscr{L}_{\alpha}$ cannot be directly deduced from those of
$\mathscr{L}_{\alpha}$ because of the lack of spectral mapping theorem
in infinite dimensional Banach spaces. In particular, one cannot
directly derive from the existence of a spectral gap for
$\mathscr{L}_{\alpha}$ the decay of the associated semigroup. However,
following an operator splitting strategy introduced in \cite{MS}, we
can localise the essential spectrum of
$(\mathcal{S}_{\alpha}(t))_{t\geq 0}$ through a weak compactness
argument and deduce from that the local stability theorem (see Theorem
\ref{theo:local} and Theorem \ref{thm:exp-convergence}). We give a
direct and elementary proof that does not rely on the recent results
of \cite{GMM,MS}.\medskip

In order to address the above point (2) entropy estimates play a
crucial role. Our method is based upon the following entropy-entropy
production estimate recently obtained in \cite{BCL1}. Introducing the
entropy production associated to $\L$,
\begin{equation*}
  \mathbf{D}(f)=-\IR \L f(v)\log\left(\dfrac{f(v)}{\M(v)}\right)\d v,
\end{equation*}
the result reads as follows:
\begin{theo}\label{theo:bcl1}
 There exists $\lambda > 0$ such that
  \begin{equation*}
    \mathbf{D}(f)
    \geq \lambda H(f|\M)
    := \lambda \,\IR f(v)\log\left(\dfrac{f(v)}{\M(v)}\right)\d v
    \geq 0
  \end{equation*}
  holds for any probability distribution $f \in L^{1}(\R^{3},\d v)$.
\end{theo}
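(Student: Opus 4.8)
The plan is to start from the weak, symmetric form of the entropy production and then upgrade the resulting nonnegativity to the quantitative estimate. Writing $h=f/\M$ and using the microscopic detailed balance $\M(v)\M(v_*)=\M(v')\M(v'_*)$ valid for elastic collisions, the scattering operator takes the form $\L f(v)=\M(v)\IR\IS B(v-v_*,\sigma)\,\M(v_*)\,[h(v')-h(v)]\,\d\sigma\,\d v_*$, where $B$ is the hard-sphere kernel and $v',v'_*$ the post-collisional velocities. Substituting this into $\mathbf{D}(f)=-\IR(\L f)\log h\,\d v$ and symmetrising against the pre–post collisional involution $(v,v_*,\sigma)\mapsto(v',v'_*,\sigma')$, which leaves the measure $B\,\M(v)\M(v_*)\,\d\sigma\,\d v_*\,\d v$ invariant, yields
\[
\mathbf{D}(f)=\frac12\Itt B(v-v_*,\sigma)\,\M(v)\M(v_*)\,[h(v')-h(v)]\,[\log h(v')-\log h(v)]\,\d\sigma\,\d v_*\,\d v .
\]
Since $(a-b)(\log a-\log b)\geq 0$, this already proves $\mathbf{D}(f)\geq 0$, and it is the starting point for the lower bound.

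The remaining estimate $\mathbf{D}(f)\geq\lambda H(f|\M)$ is a modified logarithmic Sobolev inequality for the linear, reversible operator $\L$. Reversibility (detailed balance) makes $\L$ self-adjoint and nonpositive on $L^{2}(\M^{-1}\,\d v)$, with kernel spanned by $\M$; for hard spheres the collision frequency $\Sigma(v)=\IR\IS B(v-v_*,\sigma)\,\M(v_*)\,\d\sigma\,\d v_*$ is bounded below by a positive constant and the gain part is relatively compact, so $\L$ has a genuine spectral gap $\kappa>0$. To exploit it I would use the elementary bound $(a-b)(\log a-\log b)\geq 4(\sqrt a-\sqrt b)^{2}$ to estimate the integrand above by its quadratic analogue in $\sqrt h$, giving $\mathbf{D}(f)\geq 4\,\mathcal{E}(\sqrt h)$ with $\mathcal{E}(g)=\frac12\Itt B\,\M(v)\M(v_*)\,(g(v')-g(v))^{2}$ the Dirichlet form of $\L$. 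The Poincaré inequality attached to the gap then yields $\mathcal{E}(\sqrt h)\geq\kappa\,\mathrm{Var}_{\M}(\sqrt h)$, and a linearisation shows $\mathrm{Var}_{\M}(\sqrt h)$ is comparable to $H(f|\M)$ when $h$ is close to $1$, closing the estimate for probability densities in a neighbourhood of $\M$.

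The main obstacle is the regime of large relative entropy. There the spectral gap is of no use, since $\mathrm{Var}_{\M}(\sqrt h)\leq 1$ is bounded while $H(f|\M)$ is not, so the inequality genuinely requires $\mathbf{D}(f)$ to grow \emph{at least linearly} in $H(f|\M)$ far from equilibrium — a constant lower bound $\mathbf{D}(f)\geq c$ would not suffice. I would obtain this through a Foster–Lyapunov plus local Doeblin (Harris-type) argument adapted to relative entropy: the growth $\Sigma(v)\gtrsim 1+|v|$ of the hard-sphere collision frequency provides a Lyapunov mechanism draining the entropy carried by large velocities, while a uniform minorisation of the gain kernel on bounded velocity sets controls the bulk contribution. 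Combining the two quantities produces the linear coercivity of $\mathbf{D}$ by $H$ in this regime, and patching it with the perturbative estimate above gives a single $\lambda>0$. The delicate point throughout is to keep all constants independent of $f$, which is precisely what forces the separate large-entropy analysis rather than a pure spectral-gap argument.
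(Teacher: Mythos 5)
First, a point of orientation: the paper does not prove this statement at all. Theorem \ref{theo:bcl1} is imported verbatim from the companion work \cite{BCL1} (``recently obtained in \cite{BCL1}''), so there is no internal proof to compare yours against; you are in effect being asked to reprove a separate paper. Your first step is correct and standard: since the collisions with the background are elastic and the masses are equal, detailed balance $\M(v)\M(w)=\M(v^\star)\M(w^\star)$ holds, the pre/post-collisional involution leaves the measure $|v-w|\,\M(v)\M(w)\d\sigma\d w\d v$ invariant, and symmetrising the weak form of $\L$ against $\log h$ gives the nonnegative representation of $\mathbf{D}(f)$ you write down. This yields $\mathbf{D}(f)\geq 0$.

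The quantitative bound $\mathbf{D}(f)\geq\lambda H(f|\M)$ --- which is the entire content of the theorem --- is not established by your argument, and the two-regime strategy you outline does not close. Near equilibrium, the chain $\mathbf{D}(f)\geq 4\mathcal{E}(\sqrt h)\geq 4\kappa\,\mathrm{Var}_{\M}(\sqrt h)$ is fine, but $\mathrm{Var}_{\M}(\sqrt h)$ is (up to constants) the squared Hellinger distance to $\M$, and the relative entropy is \emph{not} comparable to it on any neighbourhood of $\M$ unless you already hold two-sided pointwise bounds on $h=f/\M$: a density $f=(1-\epsilon)\M+\epsilon g$ with $g$ concentrated on a set of very small measure has Hellinger distance $O(\sqrt{\epsilon})$ to $\M$ while $H(f|\M)$ can be made arbitrarily large relative to $\mathrm{Var}_{\M}(\sqrt h)$. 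This is exactly the well-known fact that a Poincar\'e inequality for a jump process does not imply a modified logarithmic Sobolev inequality, even locally; the ``linearisation'' step you invoke is the implication that fails. Far from equilibrium, a Foster--Lyapunov plus Doeblin minorisation yields contraction of the \emph{semigroup} in a weighted norm; converting that into the static functional inequality $\mathbf{D}(f)\geq\lambda H(f|\M)$ for an arbitrary probability density $f$ (which is not a solution of anything) is precisely the missing step, and invoking it via $\frac{\d}{\d t}H=-\mathbf{D}$ would be circular. The proof in \cite{BCL1} has to exploit the explicit integral structure of the gain part $\L^{+}$ (its kernel and spreading properties), not merely the spectral gap. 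As written, your proposal establishes only the inequality $\mathbf{D}(f)\geq 0$.
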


This result has important consequences on the asymptotic behaviour of
the solution to \eqref{be:force} in the elastic case $\alpha=1$.  In
this case the scattering operator $\L$ becomes dominant and forces the
solution $f(t,v)$ to \eqref{be:force} to converge exponentially fast
towards the unique equilibrium state of $\L$, which is the Maxwellian
$\M$ (see \cite{BCL1} for details).  Roughly speaking, in the elastic
limit $\alpha \to 1$, we expect the persistence of this behaviour and
we expect the scattering operator $\L$ to drive the system in some
neighbourhood of $\M$. Since $ \fe \simeq \M$ for $\alpha$ close to 1,
the dynamics is forced to take the solution close to $\M$ as
$t \to \infty.$ To be more precise, using the above Theorem
\ref{theo:bcl1}, one can estimatethe evolution of the relative entropy
along the solutions to \eqref{be:force} (see Proposition
\ref{prop:entropy}) to get
\begin{equation*}
  H(f(t)|\M)
  \leq \exp(-\lambda t)H(f_{0}|\M) + K(1-\alpha)
  \qquad \forall t \geq 0\;;\;\alpha \in (\alpha_{0},1]
\end{equation*}
for some positive constant $K > 0$ independent of $\alpha$.  The above
estimate ensures that, for large time and $\alpha \simeq 1$, the
solution to \eqref{be:force} will become close enough to $\M$ and,
hence to $\fe$ which, combined with the local stability theorem,
yields our main result. It is worth mentioning here that, while the
analysis in \cite{BCL} dealt with a (possibly) inelastic scattering
operator, we restrict ourselves here to \emph{elastic} interactions
between the hard spheres and the host medium due to the inavailability
of Theorem \ref{theo:bcl1} in the inelastic case. Notice however that
all the spectral results of the paper, as well as the local stability
theorem \ref{thm:exp-convergence}, hold true without modification
substituting $\L$ by the inelastic scattering operator
\begin{equation*}
  \L_{e}f=\Q_{e}(f,\M),
\end{equation*}
associated to a general constant restitution coefficient
$e \in (0,1]$.

\subsection{Plan of the paper.}

The organisation of the paper is as follows. In the next section we
define the collision operators $\Q_{\alpha}$ and $\L$, and recall from
\cite{BCL} the main properties of the solution to \eqref{be:force} and
the steady state $\fe$. Section 3 is devoted to a study of the
spectral properties of $\mathscr{L}_{\alpha}$ and
$(\mathcal{S}_{\alpha}(t))_{t\geq 0}$, used later in Section 4 to
derive the local stability Theorem \ref{thm:exp-convergence}. In
Section 5 we exploit the entropy estimates to establish our main
global stability result. The proof that, for any
$\alpha\in (\alpha_{0},1]$, $\mathscr{L}_{\alpha}$ generates a
$C_{0}$-semigroup in $\mathcal{X}$ is postponed to Appendix A and uses
a weak compactness argument.

\subsection{Notation.}

Given two Banach spaces $X$ and $Y$, we denote by $\mathscr{B}(X,Y)$
the set of linear bounded operators from $X$ to $Y$ and by
$\|\cdot\|_{\mathscr{B}(X,Y)}$ the associated operator norm. If $X=Y$,
we simply denote $\mathscr{B}(X):=\mathscr{B}(X,X)$. We denote then by
$\mathscr{C}(X)$ the set of closed, densely defined linear operators
on $X$ and by $\mathscr{K}(X)$ the set of all compact operators in
$X$. For $A \in \mathscr{C}(X)$, we write $\D(A) \subset X$ for the
domain of $A$, $\mathscr{N}(A)$ for the null space of $A$ and
$\mathrm{Range}(A) \subset X$ for the range of $A$. The spectrum of
$A$ is then denoted by $\mathfrak{S}(A)$ and the resolvent set is
$\rho(A)$. For $\lambda \in \rho(A)$, $R(\lambda,A)$ denotes the
resolvent of $A$.  We also define the discrete spectrum
$\mathfrak{S}_{\mathrm{d}}(A)$ as the set of eigenvalues of $A$ with
finite algebraic multiplicity (see \cite{kato, engel} for more
details). We denote by $s(A)$ the \emph{spectral bound} of $A$, i.e.
$$s(A)=\sup\{\Re\lambda\,;\,\lambda \in \mathfrak{S}(A)\}.$$
There are several definitions of the essential spectrum of $A$ in the
literature which are unfortunately not equivalent. In the present
paper we adopt the notion of \emph{Schechter essential spectrum},
denoted by $\mathfrak{S}_{\mathrm{ess}}(A)$ and defined by
\begin{equation*}
  \mathfrak{S}_{\mathrm{ess}}(A)=\bigcap_{K \in \mathscr{K}(X)} \mathfrak{S}(A+K).
\end{equation*}
For a bounded operator $T \in \mathscr{B}(X)$ we can also define the
\emph{essential radius} of $T$ as
\begin{equation*}\begin{split}
  r_{\mathrm{ess}}(T) &= \inf\left\{
    r > 0\,;
    \mathfrak{S}(T)\cap \{\lambda \in \mathbb{C}\,;\,|\lambda| > r\}
    \subset \mathfrak{S}_{\mathrm{d}}(T)\,
  \right\}\\
 & =\sup\left\{|\mu|\,;\,\mu \in \mathfrak{S}_{\mathrm{ess}}(T)\right\}.
\end{split}\end{equation*}
Notice that the first identity is peculiar to Schechter essential spectrum whereas the second one is valid for any of the various notions of  essential spectrum (see \cite[Corollary 4.11, p. 44]{edmunds}.
If $(U(t))_{t\geq 0}$ is a $C_{0}$-semigroup in $X$ with generator
$A$, we denote by $\omega_{0}(U)$ its \emph{growth bound} and by
$\omega_{\mathrm{ess}}(U)$ its \emph{essential type}, defined by
\begin{align*}
  &\exp(t\,\omega_{0}(U)) =
  \sup\left\{|\mu|\,;\,\mu \in \mathfrak{S}(U(t))\right\},
  \\
  &\exp(t\,\omega_{\mathrm{ess}}(U))
  = r_{\mathrm{ess}}(U(t))
  = \sup\left\{|\mu|\,;\,\mu \in \mathfrak{S}_{\mathrm{ess}}(U(t))\right\}
  \qquad \text{for $t \geq 0$.}
\end{align*}

\section{Preliminary results}

\subsection{The kinetic model}

Given a constant restitution coefficient $\alpha \in (0,1)$, one
defines the bilinear Boltzman operator $\Q_\alpha$ for inelastic
interactions and hard spheres by its action on test functions
$\psi(v)$:
 \begin{equation}\label{weakfg}
 \IR \Q_\alpha (f,g)(v)\, \psi(v)\d\v  = \IR\IR \IS f(v)g(w)\,|v-w|\,\left(\psi(v')-\psi(v)\right)\d v \d w \d \sigma
 \end{equation}
with $v'=v+\frac{1+\alpha}{4}\,(|v-w|\sigma-v+w)$. In particular, for any test function $\psi=\psi(v)$, one has the following weak form of the \textit{quadratic} collision operator:
 \begin{equation}\label{co:weak}
\begin{split}
\IR \Q_\alpha (f,f)(v)\, \psi(v)\d\v  =  \frac{1}{2} \IR\IR f(v)\,f(w)\,|v-w|
\mathcal{A}_\alpha[\psi](v,w)\d\w\d\v,
\end{split}
\end{equation}
where
\begin{equation}
  \begin{split}
    \label{coll:psi} \mathcal{A}_\alpha[\psi](v,w) &=
    \frac{1}{4\pi}\IS(\psi(v')+\psi(w')-\psi(v)-\psi(w))\d{\sigma}
    \\
    &=\mathcal{A}^+_\alpha[\psi](v,w)-\mathcal{A}^-_\alpha[\psi](v,w)
  \end{split}
\end{equation}
(where we have used the symmetry of the integral under interchange of
$v$ and $w$) and the post-collisional velocities $(v',w')$ are given
by
\begin{equation}
\label{co:transf}
  v'=v+\frac{1+\alpha}{4}\,(|q|\sigma-q),
\qquad  w'=w-\frac{1+\alpha}{4}\,(|q|\sigma-q), \qquad q=v-w.
\end{equation}
In the same way,   one defines the linear scattering operator $\L$ by its action
on test functions:
 \begin{equation}\label{co:weakL}
\begin{split}
\IR \L (f)(v)\, \psi(v)\d\v  =  \frac{1}{2} \IR\IR f(v)\,\M(w)\,|v-w|
\mathcal{J}[\psi](v,w)\d\w\d\v,
\end{split}
\end{equation}
where
\begin{equation}\label{col:Je}
 \mathcal{J}[\psi](v,w)=
\dfrac{1}{4\pi}\int_{\S}
\left(\psi( {v}^\star)-\psi(v)\right)\d\sigma=\mathcal{J}^+[\psi](v,w)-\mathcal{J}^-[\psi](v,w).
\end{equation}
with  post-collisional velocities $(v^\star,w^\star)$
\begin{equation}
\label{co:transf1}
  v^\star=v+\frac{1}{2}\,(|q|\sigma-q),
\qquad  w^\star=w-\frac{1}{2}\,(|q|\sigma-q), \qquad q=v-w.
\end{equation}
For simplicity, we shall assume in the paper that the particles governed by $f$ and those with distribution function $\M$ share the same mass.
Notice that
$$\L(f)=\Q_1(f,\M)$$
and we shall adopt the convention that post (or pre-) collisional
velocities associated to the coefficient $\alpha$ are denoted with a
prime, while those associated to elastic collision are denoted with a
$\star$. We are interested in the large time behaviour of solutions to
the following Boltzmann equation:
\begin{equation}
  \label{BEev}
  \partial_t f(t,v)
  = \Q_\alpha(f(t,\cdot);f(t,\cdot))(v) + \L(f)(t,v),
  \quad f(0,v)=f_0(v),
  \qquad t > 0, v \in \R^3.
\end{equation}
Notice that
$$\Q_\alpha(f,f)=\Q^+_\alpha(f,f)-\Q^-_\alpha(f,f)=\Q^+_\alpha(f,f)-f \mathbf{\Sigma}(f) $$
where
$$\mathbf{\Sigma}(f)(v) = (f \ast |\cdot|)(v) = \IR f(w)|v-w|\d w.$$
Notice that $\mathbf{\Sigma}(f)$ does not depend on the restitution
coefficient $\alpha \in (0,1]$. In the same way,
\begin{equation*}
  \L(f)(v)=\L^+(f)(v)-\L^-(f)(v)
  =
  \L^+(f)(v)-\mathbf{\Sigma}(\M)(v)f(v)
\end{equation*}
Existence and uniqueness of solutions to \eqref{BEev} have been
established in \cite{BiCaLo}. In particular, if $f_{0}$ is a
nonnegative initial datum with
\begin{equation}
  \label{eq:mm}
  \int_{\R^{3}}f_{0}(v)|v|^{3}\d v < \infty
  \qquad \text{ and } \qquad
  \IR f_{0}(v)\d v=1
\end{equation}
then there exists a unique nonnegative solution $(f(t,v))_{t\geq 0}$
to \eqref{BEev} which additionally satisfies
$$\sup_{t\geq 0}\int_{\R^{3}}f(t,v)|v|^{3}\d v
< \infty, \text{ and } \quad \IR f(t,v)\d v=1 \qquad \forall t \geq 0.$$
More generally, uniform propagation of moments holds: namely, for any
$p \geq 2$ one has
\begin{equation}
  \label{eq:propmoments}
  \IR f_{0}(v)|v|^{p}\d v < \infty
  \implies
  \sup_{t \geq 0}\IR f(t,v)|v|^{p}\d v < \infty.
\end{equation}
See \cite[Proposition 4.2 \& Theorem 4.8]{BiCaLo} for more
details. Owing to the above mass conservation property we shall
restrict ourselves in the sequel to nonnegative initial data
satisfying \eqref{eq:mm}. Due to the influence of the scattering
operator $\L$ there is no additional conservation law besides mass
conservation. In fact, it appears impossible to express the evolution
of the momentum
$$\mathbf{u}(t)=\IR f(t,v)v \d v \in \R^{3}$$
and the energy
$$E(t)=\IR f(t,v)|v|^{2}\d v$$
in a closed form.

\subsection{A posteriori estimates}

We collect here several results obtained in our previous contribution
\cite{BCL} regarding the properties of solutions to \eqref{BEev} as
well as those of the steady solution $\fe$ to \eqref{equi}. We begin
with \emph{high-energy tails} for the solution to \eqref{be:force} and
$F_{\alpha}.$
\begin{theo}\label{tailT}
  Let $f_0$ be a nonnegative velocity distribution with
  $\IR f_0(v)\d v=1$. Assume that $f_0$ has an exponential tail of
  order $s \in (0,2]$, i.e. there exists $r_0 >0$ and $s \in (0,2]$
  such that
  \begin{equation*}
    \IR f_0(v)\exp\left(r_0|v|^s\right) \d v <\infty.
  \end{equation*}
  Then there exist $0 < r \leq r_0$ and $C >0$ (independent of
  $\alpha \in (0,1]$) such that the solution $(f(t,v))_{t\geq 0}$ to
  the Boltzmann equation \eqref{BEev} satisfies
  \begin{equation}
    \label{expbounds}
    \sup_{t \geq0}\IR f(t,v)\exp\left(r|v|^s\right) \d v \leq C < \infty.
  \end{equation}
  In particular, there exist constants $A >0$ and $M >0$ such that for
  all $\alpha \in (0,1]$ and all solutions $F_\alpha$ to \eqref{equi}
  one has
  $$\int_{\R^3} F_\alpha(v) \exp\left(A |v|^2\right) \d\v \leq M.$$
\end{theo}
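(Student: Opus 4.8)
The plan is to establish the two assertions in turn: first the uniform-in-time propagation of the exponential tail of order $s$ for the Cauchy problem \eqref{BEev}, and then to deduce the Gaussian bound for the family of steady states by exploiting stationarity. Both rest on a single moment hierarchy, the only subtlety being to track all constants \emph{uniformly} in $\alpha \in (0,1]$.

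For the first part I would work at the level of the polynomial moments $m_p(t)=\IR f(t,v)\,|v|^{2p}\d v$ and derive a closed differential hierarchy. Testing the weak forms \eqref{co:weak} and \eqref{co:weakL} against $\psi(v)=|v|^{2p}$, the loss parts $\Q^-_\alpha$ and $\L^-$ produce a coercive term: since $\mathbf{\Sigma}(f)(v)=(f\ast|\cdot|)(v)\geq c\,|v|$ for $|v|$ large (the mass being normalized), and likewise $\mathbf{\Sigma}(\M)(v)\gtrsim|v|$, one gains a half power and bounds this contribution above by $-K_1\,m_{p+1/2}$. The gain parts are controlled by a Povzner-type inequality for the angular average, of the form
$$\mathcal{A}^+_\alpha[|\cdot|^{2p}](v,w)\leq \gamma_p\,(|v|^2+|w|^2)^p + S_p(v,w),$$
with $\gamma_p\to 0$ as $p\to\infty$ and $S_p(v,w)=\sum_{k=1}^{p-1}\binom{p}{k}|v|^{2k}|w|^{2(p-k)}$ (see \cite{GPV**}). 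The point is that $\tfrac{1+\alpha}{4}\in(\tfrac14,\tfrac12]$ stays in a fixed compact set, so the elastic Povzner constants dominate up to a universal factor and the inelastic dissipation only reinforces the loss; this is what makes the constants independent of $\alpha$. The linear operator $\L$ is treated identically, the second slot being occupied by $\M$, whose polynomial moments are all finite and fixed. Collecting terms yields the hierarchy
$$\frac{\d}{\d t} m_p \leq -K_1\, m_{p+1/2} + K_2\Big(\gamma_p\, m_p + \sum_{k=1}^{p-1}\binom{p}{k} m_{k+1/2}\,m_{p-k}\Big) + K_3\, P_p,$$
where $P_p$ gathers the (uniformly bounded) lower-order contributions of $\L$, and where uniform propagation of each individual $m_p$ is already guaranteed by \eqref{eq:propmoments}.

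To pass from the hierarchy to the tail, I would form the renormalized functional attached to the order $s$, namely $\mathcal{I}_r(t)=\sum_{n\geq 0}\frac{r^n}{n!}\,m_{sn/2}(t)=\IR f(t,v)\exp(r|v|^s)\d v$, and bound its time derivative by inserting the hierarchy term by term. After summation, the binomial structure of $S_p$ together with the factorial normalization makes the convolution contribution comparable to $\mathcal{I}_r$ times a controllable prefactor, while the coercive $-K_1 m_{p+1/2}$ terms carry an extra half power and, aided by the smallness of $\gamma_p$ for large $p$, dominate; the restriction $s\leq 2$ is exactly what allows the coercive gain to beat the convolution sum so that the series closes. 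This produces a differential inequality of the type $\frac{\d}{\d t}\mathcal{I}_r(t)\leq C_1-C_2\,\mathcal{I}_r(t)$, valid for some $r\leq r_0$ small enough, whence Grönwall gives the uniform-in-time bound \eqref{expbounds} with $C$ and $r$ independent of $\alpha$.

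For the stationary estimate I would run the same computation on a steady state $F_\alpha$, for which the time derivative vanishes identically. The hierarchy then degenerates into the \emph{algebraic} inequality $K_1\,m_{p+1/2}\leq K_2(\gamma_p m_p+\sum_k\binom{p}{k} m_{k+1/2}m_{p-k})+K_3 P_p$, which is self-improving: each high moment is controlled by lower ones, and comparing the resulting sequence against the Gaussian one through the generating-function estimate with $s=2$ shows $\sum_p \frac{A^p}{p!}\,m_p<\infty$ uniformly in $\alpha$ for $A$ small, i.e. $\IR F_\alpha(v)\exp(A|v|^2)\d v\leq M$. The main obstacle throughout is the summation step, namely dominating the full combinatorial convolution sum by the single coercive moment with constants that are genuinely independent of $\alpha$; this is what forces the Povzner constants to be tracked uniformly over the entire range $\alpha\in(0,1]$ rather than merely in the elastic limit.
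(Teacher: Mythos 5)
The paper does not actually prove Theorem \ref{tailT}: it is quoted from the authors' earlier works \cite{BCL,BiCaLo}, so there is no in-paper proof to compare against. Your outline reproduces the standard argument of those references --- Povzner-type angular estimates with constants uniform in $\alpha$ because $\tfrac{1+\alpha}{4}$ ranges over a fixed compact set, a coercive half-power gain from the loss terms $\mathbf{\Sigma}(f)$ and $\mathbf{\Sigma}(\M)$, summation of the moment hierarchy into $\sum_{n}\tfrac{r^{n}}{n!}m_{sn/2}$, and the degenerate algebraic hierarchy at a steady state upgrading the tail to Gaussian order --- and it is sound in outline. The only points you pass over silently are standard but should be acknowledged: the Povzner inequality must be used in its real-exponent form since $p=sn/2$ is generically non-integer (so the binomial sum is the fractional one of \cite{GPV**}), the term-by-term differentiation of the series requires working with truncated partial sums before passing to the limit, and the stationary induction needs the a priori finiteness of all polynomial moments of $F_{\alpha}$ (inherited from the construction of the steady state) before the algebraic inequality can be iterated.
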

Notice that the above integral tail estimate for $\fe$ can actually be
strengthened to get the following \emph{pointwise} Maxwellian bounds:
\begin{propo}[{\cite[Theorems 4.4 \& 4.7]{BCL}}]
There exist two Maxwellian distributions $\underline{\M}$ and $\overline{\M}$
  (independent of $\alpha$) such that
  $$\underline{\M}(v) \leq \fe(v) \leq \overline{\M}(v)
    \qquad \forall v \in \R^3,
    \qquad \forall \alpha \in (\alpha_0,1).$$
\end{propo}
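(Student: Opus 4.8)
The plan is to start from the fixed-point form of the stationary equation \eqref{equi}. Writing $\Q_\alpha(\fe,\fe)=\Q^+_\alpha(\fe,\fe)-\fe\,\mathbf{\Sigma}(\fe)$ and $\L(\fe)=\L^+(\fe)-\fe\,\mathbf{\Sigma}(\M)$, equation \eqref{equi} becomes
$$\fe(v)\,\big(\mathbf{\Sigma}(\fe)(v)+\mathbf{\Sigma}(\M)(v)\big)=\Q^+_\alpha(\fe,\fe)(v)+\L^+(\fe)(v),\qquad v\in\R^3.$$
The first step is to control the collision frequency $\mathbf{\Sigma}(\fe)(v)+\mathbf{\Sigma}(\M)(v)=\IR(\fe+\M)(w)\,|v-w|\,\d w$ from above and below, uniformly in $\alpha$. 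Since $\fe$ carries a fixed mass and has uniformly bounded first moment by Theorem \ref{tailT}, the elementary inequalities $\big||v|-|w|\big|\le|v-w|\le|v|+|w|$, together with the fact that the fixed contribution $\mathbf{\Sigma}(\M)$ is continuous and bounded away from $0$, produce two constants $0<\kappa_0\le\kappa_1$, independent of $\alpha\in(\alpha_0,1)$, such that $\kappa_0(1+|v|)\le\mathbf{\Sigma}(\fe)(v)+\mathbf{\Sigma}(\M)(v)\le\kappa_1(1+|v|)$ for all $v\in\R^3$.

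For the lower bound I would discard the nonnegative term $\Q^+_\alpha(\fe,\fe)\ge0$ and keep only the linear scattering gain, so that
$$\fe(v)\ge\frac{\L^+(\fe)(v)}{\mathbf{\Sigma}(\fe)(v)+\mathbf{\Sigma}(\M)(v)}\ge\frac{\L^+(\fe)(v)}{\kappa_1(1+|v|)}.$$
Here the special structure of the thermal-bath model is decisive: $\L^+(\fe)=\Q^+_1(\fe,\M)$ is a gain term of $\fe$ against the \emph{fixed} Gaussian $\M$, so a single application already spreads the mass of $\fe$ over all of $\R^3$, and no iteration of the gain operator is needed. To make this quantitative I would first use the uniform exponential tail \eqref{expbounds} to produce $R>0$ with $\int_{|w|\le R}\fe(w)\,\d w\ge\tfrac12$ for every $\alpha\in(\alpha_0,1)$; restricting the $w$-integration in the explicit form of $\L^+(\fe)$ to this ball and invoking the lower bound on the elastic scattering kernel (for fixed output $v$, the pre-images together with the weight $\M$ contribute a positive amount bounded below by a Gaussian in $v$), one obtains $\L^+(\fe)(v)\ge c_0\exp(-b_0|v|^2)$ with $c_0,b_0$ independent of $\alpha$. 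Dividing by $\kappa_1(1+|v|)$ and absorbing the polynomial factor into a slightly flatter Gaussian yields the lower Maxwellian $\underline{\M}$.

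The upper bound is the delicate part, and I would obtain it through a comparison (maximum-principle) argument in the spirit of \cite{GPV**}. First one upgrades the integrability of $\fe$ to $L^\infty$ using the $L^p$-regularizing bounds for the gain operators together with \eqref{expbounds}. Then, fixing a candidate $\overline{\M}(v)=\Lambda\exp(-a|v|^2)$ with $a$ smaller than the rate $A$ of Theorem \ref{tailT}, one studies the ratio $\fe/\overline{\M}$. The heart of the matter is a pointwise estimate showing that the full gain term is dominated by the collision frequency times a Maxwellian, namely a bound of the form
$$\Q^+_\alpha(\fe,\fe)(v)+\L^+(\fe)(v)\le\big(\mathbf{\Sigma}(\fe)+\mathbf{\Sigma}(\M)\big)(v)\,\Big(\varepsilon\,\sup_{w}\frac{\fe(w)}{\overline{\M}(w)}+C_\varepsilon\Big)\overline{\M}(v),$$
exploiting that the gain operators, applied to functions dominated by a Gaussian, again produce (sub-)Gaussians. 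Dividing by $(\mathbf{\Sigma}(\fe)+\mathbf{\Sigma}(\M))(v)\,\overline{\M}(v)$ in the fixed-point identity and taking the supremum over $v$ then gives
$$\sup_{v}\frac{\fe(v)}{\overline{\M}(v)}\le\varepsilon\,\sup_{v}\frac{\fe(v)}{\overline{\M}(v)}+C_\varepsilon,$$
which closes as soon as $\varepsilon<1$ and fixes $\Lambda$.

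I expect the main obstacle to be establishing this Gaussian-to-Gaussian pointwise control of the inelastic gain operator $\Q^+_\alpha$ — in particular, verifying that $\varepsilon$ can be made $<1$ (equivalently that the rate $a$ can be chosen suitably) and that all constants are uniform in $\alpha\in(\alpha_0,1)$ as $\alpha\to1$. This rests on a careful analysis of the inelastic collision kernel, most naturally through a Carleman-type representation, together with the uniform tail and moment bounds of Theorem \ref{tailT}; the elastic limit is benign here because $\Q^+_\alpha\to\Q^+_1$ in the relevant estimates, so the constants degenerate neither as $\alpha\to1$ nor at the lower threshold $\alpha_0$.
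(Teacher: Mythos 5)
First, note that the paper does not actually prove this proposition: it is imported verbatim from \cite[Theorems 4.4 \& 4.7]{BCL}, so there is no in-paper argument to compare yours against. Judged on its own merits, your reconstruction follows the same general route as the cited source (and as \cite{GPV**,MiMo2}): the fixed-point identity $\fe\,\left(\mathbf{\Sigma}(\fe)+\mathbf{\Sigma}(\M)\right)=\Q^+_\alpha(\fe,\fe)+\L^+(\fe)$, a lower bound obtained by discarding the quadratic gain and exploiting the explicit kernel of $\L^+=\Q_1^+(\cdot,\M)$, and an upper bound via a comparison principle. The lower-bound half is essentially complete: the uniform tail bound \eqref{expbounds} concentrates a fixed fraction of the mass on a fixed ball, the kernel \eqref{k} is bounded below there by a Gaussian in $v$, and the collision frequency is comparable to $1+|v|$ uniformly in $\alpha$, so $\underline{\M}$ indeed comes out independent of $\alpha$.

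The upper-bound half, however, is a program rather than a proof, with two genuine gaps. First, the absorption step $S\leq\varepsilon S+C_\varepsilon$ with $S=\sup_v \fe(v)/\overline{\M}(v)$ is vacuous unless one knows a priori that $S<\infty$; the $L^\infty$ bound on $\fe$, which is all you invoke, does not give this. One needs a preliminary pointwise Gaussian bound, obtained by combining the integral Gaussian tail of Theorem \ref{tailT} with uniform-in-$\alpha$ gradient (Sobolev) bounds on $\fe$ — without that input the bootstrap never starts. Second, the displayed key inequality, in which the quadratic term $\Q^+_\alpha(\fe,\fe)$ contributes only \emph{linearly} in $S$ and with a factor $\varepsilon<1$ uniform in $\alpha$, is precisely the hard content of \cite[Theorem 4.7]{BCL}: one must write $\Q^+_\alpha(\fe,\fe)\leq S\,\Q^+_\alpha(\overline{\M},\fe)$ and then split into large and small velocities (using the uniform moment and tail controls) to extract the smallness, and this is exactly the step you defer as ``the main obstacle.'' So the proposal correctly identifies the strategy of the cited theorems but does not establish the upper Maxwellian bound.
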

\subsection{Convergence of $\fe$ to $\M$}\label{sec:convFeM} 

Let us now introduce
\begin{equation}
  \label{eq:XY-def}
  \mathcal{X} =
  L^1(m ^{-1}) = L^1(\R^3,m^{-1}(v)\d v),
  \qquad
  \mathcal{Y} =
  L^1_1(m ^{-1}) = L^1(\R^3, \langle v \rangle m^{-1}(v)\d v)
\end{equation}
where
$$\langle v \rangle=(1+|v|^{2})^{\frac{1}{2}} \quad \text{ and } \quad m(v)=\exp\left(-a |v|\right), \qquad a >0, \:\:v \in \R^{3}.$$
According to the above Theorem \ref{tailT}, $\fe \in \mathcal{X}$ for
any $\alpha \in (0,1]$.  We recall from \cite[Proposition 11]{AlCaGa}
that $\Q_{\alpha}$ is well defined on $\mathcal{Y}$:
\begin{propo}\label{propoAlo} There exists $C >0$ such that, for any $\alpha  \in (0,1)$
$$\|\Q_\alpha(h,g)\|_{\mathcal{X}}+\|\Q_\alpha(g,h)\|_{\mathcal{X}} \leq C\|h\|_{\mathcal{Y}}\,\|g\|_{\mathcal{Y}} \qquad \qquad \forall h,g \in \mathcal{Y}.$$
\end{propo}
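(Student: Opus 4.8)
The plan is to split the collision operator into its gain and loss parts, $\Q_\alpha(h,g) = \Q_\alpha^+(h,g) - \Q_\alpha^-(h,g)$, and to bound each part separately in the norm of $\mathcal{X}$. Because each part is monotone with respect to its arguments, $|\Q_\alpha^\pm(h,g)| \leq \Q_\alpha^\pm(|h|,|g|)$, it suffices to estimate the nonnegative quantities $\Q_\alpha^\pm(|h|,|g|)$; I would therefore assume $h,g \geq 0$ from the outset and recover the general statement at the end by replacing $h,g$ with $|h|,|g|$. The whole estimate then reduces to controlling how the exponential weight $m^{-1}(v) = \exp(a|v|)$ interacts with the collision, the single extra power of velocity being absorbed by the $\langle v\rangle$-weight built into $\mathcal{Y}$.

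The loss part is immediate and requires no change of variables. Reading off the weak form \eqref{weakfg}, one has $\Q_\alpha^-(h,g)(v) = c\,h(v)\,\mathbf{\Sigma}(g)(v)$ with $\mathbf{\Sigma}(g)(v) = \IR g(w)|v-w|\d w$ and $c>0$ a fixed normalisation independent of $\alpha$. Using $|v-w| \leq |v|+|w|$ one gets the pointwise bound $\mathbf{\Sigma}(g)(v) \leq \sqrt 2\,\langle v\rangle \IR g(w)\langle w\rangle\d w \leq \sqrt 2\,\langle v\rangle\,\|g\|_{\mathcal{Y}}$, whence
$$\|\Q_\alpha^-(h,g)\|_{\mathcal{X}} = c\IR h(v)\,\mathbf{\Sigma}(g)(v)\,\exp(a|v|)\d v \leq \sqrt 2\,c\,\|g\|_{\mathcal{Y}}\IR h(v)\,\langle v\rangle\exp(a|v|)\d v = \sqrt 2\,c\,\|h\|_{\mathcal{Y}}\|g\|_{\mathcal{Y}},$$
uniformly in $\alpha \in (0,1)$.

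The gain part is the crux, and the point is to push the exponential weight through the collision. Testing the gain operator against the nonnegative weight $\psi(v) = \exp(a|v|)$ (legitimate for nonnegative $h,g$ by monotone convergence, even though $\psi$ is unbounded) gives
$$\|\Q_\alpha^+(h,g)\|_{\mathcal{X}} = \Itt h(v)\,g(w)\,|v-w|\,\exp(a|v'|)\d v\,\d w\,\d\sigma.$$
The key geometric fact is the bound $|v'| \leq |v|+|w|$, uniform in $\alpha \in (0,1]$ and $\sigma \in \S$: writing $v' = (1-\beta)v + \beta w + \beta|v-w|\sigma$ with $\beta = \tfrac{1+\alpha}{4} \leq \tfrac12$, the triangle inequality yields $|v'| \leq (1-\beta)|v| + \beta|w| + \beta(|v|+|w|) = |v| + 2\beta|w| \leq |v|+|w|$. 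Hence the submultiplicativity $\exp(a|v'|) \leq \exp(a|v|)\exp(a|w|)$; carrying out the ($\alpha$-independent) $\sigma$-integration and using $|v-w| \leq \sqrt 2\,\langle v\rangle\langle w\rangle$ one obtains
$$\|\Q_\alpha^+(h,g)\|_{\mathcal{X}} \leq C\IR h(v)\,\langle v\rangle\exp(a|v|)\d v \IR g(w)\,\langle w\rangle\exp(a|w|)\d w = C\,\|h\|_{\mathcal{Y}}\|g\|_{\mathcal{Y}}.$$
Adding the two contributions bounds $\|\Q_\alpha(h,g)\|_{\mathcal{X}}$ by $C\|h\|_{\mathcal{Y}}\|g\|_{\mathcal{Y}}$ with $C$ independent of $\alpha$, and the estimate for $\|\Q_\alpha(g,h)\|_{\mathcal{X}}$ follows verbatim after interchanging $h$ and $g$. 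The only real obstacle is the gain term: everything hinges on the compatibility $|v'| \leq |v|+|w|$ of the exponential weight with the inelastic collision, valid uniformly down to the elastic value $\alpha = 1$, and on distributing the relative-velocity factor $|v-w|$ as one power of $\langle v\rangle$ onto each argument — which is precisely why one must map from $\mathcal{Y} = L^1_1(m^{-1})$ into $\mathcal{X} = L^1(m^{-1})$ rather than work within a single space.
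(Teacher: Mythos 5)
Your argument is correct, but it is worth pointing out that the paper does not prove this proposition at all: it is quoted verbatim from \cite[Proposition 11]{AlCaGa}, whose proof goes through the general convolution-inequality machinery for the gain operator (Young-type estimates designed to cover $L^p$ targets and weights $\exp(a|v|^s)$ with $s\in(0,2]$). What you supply instead is a short, self-contained $L^1$ argument, and every step checks out: the reduction to $h,g\geq 0$ via $|\Q_\alpha^\pm(h,g)|\leq \Q_\alpha^\pm(|h|,|g|)$ is legitimate since both parts are positivity-preserving; the loss term is handled exactly as the paper does elsewhere via $|v-w|\leq\langle v\rangle\langle w\rangle$; and the whole gain estimate rests on the elementary identity $v'=(1-\beta)v+\beta w+\beta|v-w|\sigma$ with $\beta=\tfrac{1+\alpha}{4}\leq\tfrac12$, giving $|v'|\leq|v|+|w|$ uniformly in $\alpha\in(0,1]$ and $\sigma$, hence the submultiplicativity $e^{a|v'|}\leq e^{a|v|}e^{a|w|}$ that lets you integrate out $\sigma$ trivially. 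The monotone-convergence justification for testing against the unbounded weight is the right thing to say. The trade-off is clear: your proof is more elementary and fully explicit in the constant, but it works only because the weight here has stretched-exponential order $s=1$, for which the triangle inequality makes $m^{-1}$ submultiplicative along collisions; for the Gaussian-range weights $s\in(1,2]$ that also appear in this line of work (e.g.\ in Theorem \ref{tailT}) one would genuinely need the dissipation inequality $|v'|^2+|w'|^2\leq|v|^2+|w|^2$ and the finer analysis of \cite{AlCaGa}, which is presumably why the authors cite the general result rather than reproving this special case.
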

Moreover, one has the following:
\begin{propo}[{\cite[Proposition 3.2]{MiMo2}}]
  \label{elasticMM}
  For any $\alpha,\alpha' \in (0,1)$ and any $f \in
  W^{1,1}_1(m^{-1})$, $g \in L^1_1(m^{-1})$, it holds
  \begin{equation*}
    \|\Q_\alpha^+(f,g)-\Q_{\alpha'}^+(f,g)\|_{\mathcal{X}}
    \leq p(\alpha-\alpha')\|f\|_{W^{1,1}_1(m^{-1})}\,\|g\|_{\mathcal{Y}}
  \end{equation*}
  and
  $$\|\Q_\alpha^+(g,f)- \Q_{\alpha'}^+(g,f)\|_{\mathcal{X}} \leq  p(\alpha-\alpha')\|f\|_{W^{1,1}_1(m^{-1})}\,\|g\|_{\mathcal{Y}}$$
  where $ p(r)$ is an explicit polynomial function with
  $\lim_{r \to 0^+} p(r)=0.$
\end{propo}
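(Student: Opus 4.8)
The plan is to reduce everything to the gain part and exploit the fact that the post-collisional velocity depends affinely on $\alpha$. First I would use the duality between $\mathcal{X}=L^1(m^{-1})$ and $L^\infty(m)$: for a test function with $|\psi(v)|\le m^{-1}(v)=\exp(a|v|)$, the weak form \eqref{co:weak}--\eqref{coll:psi} gives
$$\IR\big(\Q_\alpha^+(f,g)-\Q_{\alpha'}^+(f,g)\big)\psi\,\d v=\IR\IR\IS f(v)g(w)|v-w|\big(\psi(v_\alpha')-\psi(v_{\alpha'}')\big)\d\sigma\,\d w\,\d v,$$
where $v_\beta'=(1-\beta)v+\beta w+\beta|v-w|\sigma$ with $\beta=\tfrac{1+\alpha}{4}$. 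Since $\beta$ depends affinely on $\alpha$ and $\partial_\beta v_\beta'=|v-w|\sigma-(v-w)$, the fundamental theorem of calculus yields $\psi(v_\alpha')-\psi(v_{\alpha'}')=\int_{\beta'}^{\beta}\big(|v-w|\sigma-(v-w)\big)\cdot\nabla\psi(v_s')\,\d s$, with $|\beta-\beta'|=\tfrac14|\alpha-\alpha'|$. This isolates the desired prefactor $|\alpha-\alpha'|$ and reduces the problem to a bound on the $s$-derivative that is uniform in $s$.

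Next I would transfer the gradient hitting $\psi$ onto $f$, since $\psi$ is merely bounded while $f\in W^{1,1}_1(m^{-1})$. For fixed $w,\sigma,s$ the map $v\mapsto v_s'$ is a diffeomorphism, with $Dv_s'=(1-s)I+s\,\sigma\otimes(q/|q|)$ and Jacobian $(1-s)^2\big((1-s)+s\,(q/|q|)\cdot\sigma\big)$, which is positive as long as $s<\tfrac12$, i.e.\ $\alpha<1$. Changing variables $v\mapsto V:=v_s'$ and integrating by parts in $V$ moves the derivative onto $f$ composed with the inverse map, producing a term in $\nabla f$ together with lower-order terms in $f$ (from the derivatives of $|v-w|$ and of the Jacobian); collecting them yields the full $\|f\|_{W^{1,1}_1(m^{-1})}$. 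The hard-sphere factor $|v-w|$ and the differentiated displacement $|v-w|\sigma-(v-w)$, bounded by $2|v-w|\le 2\langle v\rangle\langle w\rangle$, supply the first-order moment weights, leaving $g$ measured in $\mathcal{Y}=L^1_1(m^{-1})$.

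For the weight bookkeeping I would use $|v_s'|\le|v|+|w|$, so that $m^{-1}(v_s')=\exp(a|v_s'|)\le m^{-1}(v)\,m^{-1}(w)$; the $\exp(a|v|)$ weight of the $\mathcal{X}$-norm is thereby split between the $W^{1,1}_1(m^{-1})$-norm of $f$ and the $L^1_1(m^{-1})$-norm of $g$, while the extra power of $|v-w|$ provides the $\langle\cdot\rangle$ weights. Taking the supremum over admissible $\psi$ gives the first inequality with an explicit $p$ vanishing at the origin; the second inequality, in which $f$ occupies the second slot of $\Q_\alpha^+(g,f)$, follows by the symmetric computation, now transferring the $w$-derivative onto $f(w)$.

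The main obstacle I anticipate is obtaining a constant that is uniform as $\alpha,\alpha'\to1$, which is precisely the regime exploited later. The Jacobian $(1-s)^2\big((1-s)+s\,(q/|q|)\cdot\sigma\big)$ degenerates along grazing directions $\sigma\to -q/|q|$ exactly as $s\to\tfrac12$, so the vector field $(Dv_s')^{-1}\big(|v-w|\sigma-(v-w)\big)$ generated by the change of variables is controlled only up to a factor $\big((1-s)+s\,(q/|q|)\cdot\sigma\big)^{-1}$, whose integral over $\S$ grows like $\log\tfrac{1}{1-2s}$. Keeping $p$ a function of $\alpha-\alpha'$ alone therefore demands more than the crude change of variables above: I would split the $\sigma$-integral into forward and backward caps and use the compensating smallness of the hard-sphere weight $|v-w|$ and of the displacement $|v-w|\sigma-(v-w)$ near the grazing set, so that the logarithmic singularity is reabsorbed. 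This uniform-in-the-elastic-limit control is the heart of the estimate and the point on which the entire perturbative scheme of the paper rests.
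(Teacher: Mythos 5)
First, a remark on scope: the paper does not prove this proposition at all — it is quoted verbatim from \cite[Proposition 3.2]{MiMo2} — so there is no in-paper argument to compare yours against, and I am judging your proposal on its own merits. Your skeleton (duality against $|\psi|\le m^{-1}$, the affine dependence $v'_s=v+s(|q|\sigma-q)$ with $s=\tfrac{1+\alpha}{4}$, the fundamental theorem of calculus producing the prefactor $|\alpha-\alpha'|$, and the transfer of the derivative from $\psi$ onto $f$) is the natural one and correctly explains the loss of one derivative and one moment. The genuine gap is the one you flag yourself but do not close: uniformity of the constant as $\alpha,\alpha'\to 1$, which is exactly the regime in which the paper uses the estimate (it is invoked with $\alpha'=1$ in the proofs of Proposition \ref{prop:LaL1} and Lemma \ref{lem:Bdiss}, and the statement requires $p$ to depend on $\alpha-\alpha'$ alone). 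Your proposed repair rests on a false premise. The Jacobian $(1-s)^2\bigl((1-s)+s\,\hat{q}\cdot\sigma\bigr)$ degenerates, as $s\to\tfrac12$, on the cap $\sigma\approx-\hat{q}$; but there the displacement $|q|\sigma-q=|q|(\sigma-\hat{q})$ equals $-2q$, i.e.\ it is \emph{maximal}, of size $2|q|$ (it vanishes only at $\sigma=+\hat{q}$, where the Jacobian is harmless), and the hard-sphere factor $|v-w|$ does not depend on $\sigma$ at all, so neither quantity can reabsorb the singularity. After the integration by parts the coefficients carry factors $\bigl((1-s)+s\,\hat{q}\cdot\sigma\bigr)^{-1}$ and $\bigl((1-s)+s\,\hat{q}\cdot\sigma\bigr)^{-2}$ whose $\sigma$-integrals blow up as $s\to\tfrac12$, and since one may take $\psi=m^{-1}>0$ and $f,g\ge 0$ there is no sign cancellation to rescue the term-by-term bound.

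The degeneracy is an artefact of differentiating in the wrong variable, and the fix is structural rather than a matter of splitting the sphere. Since $D_w v'_s=s\bigl(I-\sigma\otimes\hat{q}\bigr)$ and $\bigl(I-\sigma\otimes\hat{q}\bigr)(-q)=|q|\sigma-q$, one has the exact identity
$(|q|\sigma-q)\cdot\nabla\psi(v'_s)=-\tfrac1s\,q\cdot\nabla_w\bigl[\psi(v'_s)\bigr]$,
whose coefficient $-q/s$ is perfectly regular uniformly up to $s=\tfrac12$; integrating by parts in $w$ then lands the derivative on the entry of $\Q^+_\alpha$ occupying the $w$-slot with constants uniform in the elastic limit, which settles the case where the $W^{1,1}$ function sits in that slot. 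For the other ordering one must first pass to a representation in which the roles of the two entries are exchanged — this is exactly what the Carleman-type kernel \eqref{K1} of Appendix B provides, the restitution coefficient entering only through the explicit translation $\frac{\alpha-1}{\alpha+1}(w-v)$ of the argument of one density, so that the difference of the two gain operators is controlled by $\|f(\cdot+a)-f(\cdot+b)\|_{L^1}\le|a-b|\,\|\nabla f\|_{L^1}$ with $|a-b|\lesssim|\alpha-\alpha'|\,|v-w|$. Without one of these two devices your argument yields a constant that degenerates as $\alpha\to1$ and therefore cannot feed the perturbative scheme the paper builds on it.
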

In the elastic limit $\alpha \to 1$, one has the following:
\begin{theo}[{\cite[Theorem 5.5]{BCL}}]
  \label{limit1}
  There exists an explicit function $\eta_1(\alpha)$ such that
  $\lim_{\alpha \to 1}\eta_1(\alpha)=0$ and such that  \begin{equation*}
    \left\|\fe-\M\right\|_{\mathcal{Y}}
    \leq
    \eta_1(\alpha)
    \qquad \forall \alpha \in (\alpha_0,1].
  \end{equation*}
\end{theo}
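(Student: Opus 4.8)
The starting point is the observation that the host Maxwellian is itself the \emph{elastic} steady state: since $\Q_1(\M,\M)=0$ for a Maxwellian and $\L\M=\Q_1(\M,\M)=0$, one has $\Q_1(\M,\M)+\L(\M)=0$, so that $\M=F_1$ solves \eqref{equi} at $\alpha=1$ with unit mass. The plan is to regard $\fe$ as a perturbation of $\M$ in the elastic limit. Writing $h=\fe-\M$, the two mass normalisations give $\IR h\,\d v=0$. Subtracting the stationary equations, using $\L(\fe)-\L(\M)=\L h$, and expanding the quadratic operator by bilinearity, I would isolate the \emph{inelasticity defect} from the genuinely quadratic remainder. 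Since the loss parts are $\alpha$-independent they cancel, so that $\Q_\alpha(\fe,\fe)-\Q_1(\fe,\fe)=\Q^+_\alpha(\fe,\fe)-\Q^+_1(\fe,\fe)$, and $\Q_1(\fe,\fe)-\Q_1(\M,\M)=\Q_1(h,\M)+\Q_1(\M,h)+\Q_1(h,h)$. Recalling $\mathscr{L}_1 h=\Q_1(h,\M)+\Q_1(\M,h)+\L h$, one arrives at
\begin{equation*}
\mathscr{L}_1 h=-\mathcal{E}_\alpha-\Q_1(h,h),\qquad \mathcal{E}_\alpha:=\Q^+_\alpha(\fe,\fe)-\Q^+_1(\fe,\fe).
\end{equation*}

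Next I would bound the forcing on the right-hand side in $\mathcal{X}$. The inelasticity defect is controlled by Proposition \ref{elasticMM} applied with both arguments equal to $\fe$: $\|\mathcal{E}_\alpha\|_{\mathcal{X}}\leq p(\alpha-1)\,\|\fe\|_{W^{1,1}_1(m^{-1})}\|\fe\|_{\mathcal{Y}}$. Here the uniform-in-$\alpha$ a priori bounds on the steady state (smoothness from Theorem \ref{theo:ex-uni}, the Maxwellian pointwise bounds and the exponential tails from Theorem \ref{tailT}, together with the corresponding uniform regularity estimates) make both norms of $\fe$ uniformly bounded, whence $\|\mathcal{E}_\alpha\|_{\mathcal{X}}\leq C\,p(\alpha-1)$ with $p(r)\to0$ as $r\to0$. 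The quadratic remainder is handled by Proposition \ref{propoAlo}, which gives $\|\Q_1(h,h)\|_{\mathcal{X}}\leq C\|h\|_{\mathcal{Y}}^2$.

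The heart of the argument is the inversion of $\mathscr{L}_1$. Since $\M$ spans $\mathscr{N}(\mathscr{L}_1)$ and the spectral gap $\nu>0$ isolates $0$ as a simple eigenvalue, $\mathscr{L}_1$ is boundedly invertible on the zero-mass subspace of $\mathcal{X}$, to which $h$ belongs. The delicate point is that the gap lives in $\mathcal{X}$ whereas the target is the stronger space $\mathcal{Y}$. This is reconciled through the structure of the loss term: the loss part of $\mathscr{L}_1$ is multiplication by $\mathbf{\Sigma}(\M)(v)=(\M\ast|\cdot|)(v)\gtrsim\langle v\rangle$, which is coercive and gains one power of the weight. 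Writing $\mathbf{\Sigma}(\M)\,h=(\text{gain terms})-\mathscr{L}_1 h$ and combining the $\mathcal{X}$-gap with this coercivity upgrades the inverse estimate to $\|h\|_{\mathcal{Y}}\leq C\,\|\mathscr{L}_1 h\|_{\mathcal{X}}$ for zero-mass $h$. Feeding in the bounds of the previous paragraph yields the closed nonlinear inequality
\begin{equation*}
\|h\|_{\mathcal{Y}}\leq C\,p(\alpha-1)+C\,\|h\|_{\mathcal{Y}}^2.
\end{equation*}

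The main obstacle is precisely this superlinear inequality, whose small forcing $C\,p(\alpha-1)$ does not by itself preclude the large-root branch. I would close it by a qualitative-to-quantitative bootstrap: the uniform a priori bounds give compactness of the family $\{\fe\}_{\alpha}$ in $\mathcal{Y}$, any limit along $\alpha\to1$ solves the elastic stationary equation with unit mass, and the uniqueness statement of Theorem \ref{theo:ex-uni} at $\alpha=1$ identifies that limit as $\M$; hence $\|h\|_{\mathcal{Y}}\to0$ qualitatively. This places us, for $\alpha$ close to $1$, on the small-root branch of the inequality, where the quadratic term is negligible, so that $\|\fe-\M\|_{\mathcal{Y}}\leq 2C\,p(\alpha-1)=:\eta_1(\alpha)\to0$. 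This furnishes the explicit vanishing rate asserted in Theorem \ref{limit1}. I expect the norm mismatch between the $\mathcal{X}$-gap and the $\mathcal{Y}$-target, handled via the weight gain of $\mathbf{\Sigma}(\M)$, and the absorption of the quadratic term via the compactness-and-uniqueness seed, to be the two most technical points.
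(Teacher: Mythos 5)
This theorem is not proved in the present paper: it is imported verbatim from \cite[Theorem 5.5]{BCL}, so there is no in-paper argument to compare against line by line. That said, your proposal is a faithful reconstruction of the strategy actually used in \cite{BCL}: the identity $\mathscr{L}_1 h=-\bigl(\Q^+_\alpha(\fe,\fe)-\Q^+_1(\fe,\fe)\bigr)-\Q_1(h,h)$ for $h=\fe-\M\in\widehat{\mathcal{Y}}$, the bound on the inelasticity defect via Proposition \ref{elasticMM}, the bounded invertibility of $\mathscr{L}_1$ on the zero-mass subspace, and the absorption of the quadratic term by first obtaining non-quantitative convergence $\fe\to\M$ through compactness and elastic uniqueness, then closing the quadratic inequality on the small-root branch. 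Two remarks. First, the step you present as delicate --- upgrading the inverse estimate from $\mathcal{X}$ to $\mathcal{Y}$ via the coercivity of $\mathbf{\Sigma}(\M)$ --- is already packaged in Theorem \ref{spect}, which asserts that $\mathscr{L}_1$ is a bijection from $\widehat{\mathcal{Y}}$ onto $\widehat{\mathcal{X}}$; the bounded inverse theorem then gives $\|h\|_{\mathcal{Y}}\leq C\|\mathscr{L}_1h\|_{\mathcal{X}}$ directly, so no separate weight-gain argument is needed. Second, the one genuine dependency you gloss over is the uniform-in-$\alpha$ bound on $\|\fe\|_{W^{1,1}_1(m^{-1})}$ required to apply Proposition \ref{elasticMM} with $f=g=\fe$: this is a nontrivial regularity input (uniform weighted Sobolev control of the steady states, established in \cite{BiCaLo,BCL}) and must be cited or proved, not merely inferred from smoothness of each individual $\fe$. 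With that input granted, your argument is correct and coincides in substance with the original proof.
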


\subsection{Spectral properties of the linearised operator for
  $\alpha=1$}
\label{sec:spectr-elast}

Define the elastic linearised operator
$\mathscr{L}_1\::\:\D(\mathscr{L}_1) \subset \mathcal{X} \to
\mathcal{X}$ by
\begin{equation*}
  \mathscr{L}_1 (h)=\Q_1(\M,h)+\Q_1(h,\M) +\L h,
  \qquad \forall h \in \D(\mathscr{L}_1)=\mathcal{Y}.
\end{equation*}
(We recall $\mathcal{X}$ and $\mathcal{Y}$ were defined in
\eqref{eq:XY-def}.)  We introduce also
\begin{equation*}
  \widehat{\mathcal{X}} = \{f \in \mathcal{X}\,;\,\IR f \d v=0\},
  \qquad
  \widehat{\mathcal{Y}}=\{f \in \mathcal{Y}\,;\,\IR f \d v =0\}.
\end{equation*}
One has the following structure of the spectrum of $\mathscr{L}_{1}$:
\begin{theo}[{\cite[Theorem 5.3]{BCL}}]
  \label{spect}
  The null space of $\mathscr{L}_1$ in $\mathcal{X}$ is given by
  \begin{equation*}
    \mathscr{N}(\mathscr{L}_1)=\mathrm{span}(\M).
  \end{equation*}
  Moreover, $\mathscr{L}_1$ admits a positive spectral gap $\nu >0$.
  In particular,
  $\mathscr{N}(\mathscr{L}_1) \cap \widehat{\mathcal{X}}=\{0\}$ and
  $\mathscr{L}_1$ is invertible from $\widehat{\mathcal{Y}}$ to
  $\widehat{\mathcal{X}}$.
\end{theo}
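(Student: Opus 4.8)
The plan is to combine a self-adjoint spectral analysis in the Gaussian-weighted Hilbert space
\[
\mathbb{H}=L^{2}\big(\R^{3};\,\M^{-1}(v)\,\d v\big)
\]
with an \emph{enlargement} argument transporting the resulting gap to the larger space $\mathcal{X}=L^{1}(m^{-1})$. The starting point is the identity $\mathscr{L}_1\M=0$, which is immediate from $\Q_1(\M,\M)=0$ together with $\L\M=\Q_1(\M,\M)=0$; hence $\mathrm{span}(\M)\subseteq\mathscr{N}(\mathscr{L}_1)$ and $0$ is an eigenvalue. Since mass is the only collision invariant of both $\Q_1$ and $\L$, one also has $\IR\mathscr{L}_1 h\,\d v=0$ for every $h\in\mathcal{Y}$, so that $\widehat{\mathcal{X}}$ and $\widehat{\mathcal{Y}}$ are invariant under $\mathscr{L}_1$; this will be used at the very end.

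First I would establish the gap in $\mathbb{H}$. Writing $\mathscr{L}_1=\mathscr{L}_B+\L$ with $\mathscr{L}_B h=\Q_1(\M,h)+\Q_1(h,\M)$ the classical linearised Boltzmann operator, the micro-reversibility of elastic collisions shows that both $\mathscr{L}_B$ and the hard-sphere scattering operator $\L$ are self-adjoint and nonpositive on $\mathbb{H}$, with explicit Dirichlet forms. The kernels follow from the usual $H$-theorem computation: $\mathscr{N}(\mathscr{L}_B)=\mathrm{span}\{\M,v_1\M,v_2\M,v_3\M,|v|^2\M\}$, the full set of collision invariants, whereas $\mathscr{N}(\L)=\mathrm{span}(\M)$ since momentum and energy are destroyed by the background. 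The decisive point is that the hard-sphere scattering operator $\L$ enjoys, on its own, a spectral gap $\nu_{\L}>0$ on $\{\M\}^{\perp}$ (classical via Weyl's theorem, the gain part being relatively compact with respect to the collision frequency $\mathbf{\Sigma}(\M)(v)\simeq|v|$). Combining this with $-\langle\mathscr{L}_B h,h\rangle_{\mathbb{H}}\geq 0$ yields
\[
-\langle\mathscr{L}_1 h,h\rangle_{\mathbb{H}}=-\langle\mathscr{L}_B h,h\rangle_{\mathbb{H}}-\langle\L h,h\rangle_{\mathbb{H}}\geq\nu_{\L}\,\|h\|_{\mathbb{H}}^{2}\qquad\forall\,h\perp\M,
\]
so in $\mathbb{H}$ one gets $\mathfrak{S}(\mathscr{L}_1)\subset\{0\}\cup(-\infty,-\nu_{\L}]$, with $0$ simple and eigenfunction $\M$. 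This is precisely where the heuristic that \emph{$\L$ is dominant} is made rigorous, and it is essentially classical.

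Next I would transport this picture to $\mathcal{X}$ through a dissipative splitting $\mathscr{L}_1=\mathscr{A}+\mathscr{B}$, with $\mathscr{B} h=-2\,\mathbf{\Sigma}(\M)\,h$ the multiplication by twice the collision frequency and $\mathscr{A} h=\Q_1^{+}(\M,h)+2\Q_1^{+}(h,\M)-\M\,\mathbf{\Sigma}(h)$ the remaining gain-plus-nonlocal-loss part. Since $\mathbf{\Sigma}(\M)$ is bounded below, $\mathfrak{S}(\mathscr{B})=\mathfrak{S}_{\mathrm{ess}}(\mathscr{B})\subset\{\Re\lambda\leq-\Sigma_0\}$ with $\Sigma_0:=\inf_{v}2\,\mathbf{\Sigma}(\M)(v)>0$. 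Using the regularising and Gaussian-tail-producing properties of the gain operators (together with the Gaussian factor in the nonlocal loss and Proposition~\ref{propoAlo}), one shows that $\mathscr{A}$ is relatively compact with respect to $\mathscr{B}$ on $\mathcal{X}$; by stability of the Schechter essential spectrum under $\mathscr{B}$-compact perturbations, $\mathfrak{S}_{\mathrm{ess}}(\mathscr{L}_1)\subset\{\Re\lambda\leq-\Sigma_0\}$ in $\mathcal{X}$ too, so the spectrum in $\{\Re\lambda>-\Sigma_0\}$ consists only of isolated eigenvalues of finite algebraic multiplicity. Writing the eigenvalue relation as $h=R(\lambda,\mathscr{B})\mathscr{A} h$ and iterating, each eigenfunction $h\in\mathcal{X}$ with $\Re\lambda>-\Sigma_0$ gains enough decay and integrability to land in $\mathbb{H}$; hence every such eigenvalue is an eigenvalue of $\mathscr{L}_1$ in $\mathbb{H}$. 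By the previous step the only one with $\Re\lambda>-\min(\Sigma_0,\nu_{\L})$ is $0$, with eigenfunction in $\mathrm{span}(\M)$ and (by semisimplicity in $\mathbb{H}$) no Jordan block. This gives $\mathscr{N}(\mathscr{L}_1)=\mathrm{span}(\M)$ in $\mathcal{X}$ and a positive spectral gap $\nu>0$.

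Finally, invertibility follows from the spectral decomposition: the projection $P_0$ at the isolated simple eigenvalue $0$ has range $\mathrm{span}(\M)$, and because $\IR\mathscr{L}_1 h\,\d v=0$ the functional $h\mapsto\IR h\,\d v$ is the associated left eigenvector, so $\ker P_0=\widehat{\mathcal{X}}$ and $P_0 h=\big(\IR h\,\d v\big/\IR\M\,\d v\big)\M$. On the invariant complement $\widehat{\mathcal{X}}$ one has $0\in\rho(\mathscr{L}_1)$, whence $\mathscr{L}_1:\widehat{\mathcal{Y}}\to\widehat{\mathcal{X}}$ is an isomorphism, and in particular $\mathscr{N}(\mathscr{L}_1)\cap\widehat{\mathcal{X}}=\{0\}$. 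I expect the main obstacle to be the enlargement step: establishing the relative compactness of $\mathscr{A}$ and, above all, the bootstrap of eigenfunctions from $\mathcal{X}$ into the Gaussian space $\mathbb{H}$, which requires quantitative regularising and Maxwellian-tail estimates for the gain operators $\Q_1^{+}$ in the weighted spaces; the self-adjoint analysis in $\mathbb{H}$, by contrast, is standard.
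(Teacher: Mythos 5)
Your first step---the spectral gap of $\mathscr{L}_1$ in $\mathbb{H}=L^2(\M^{-1})$ obtained by adding the nonpositive Dirichlet form of the linearised Boltzmann operator to the gap of the linear scattering operator $\L$ on $\{\M\}^{\perp}$---is sound and is exactly the ``Poincar\'e-like inequality in $L^2(\M^{-1})$'' that the paper attributes to \cite{BCL}. The genuine gap is in your enlargement step. You place the \emph{entire} gain part into $\mathscr{A}$ and keep only the multiplication operator $\mathscr{B}=-2\mathbf{\Sigma}(\M)$, and you then claim that iterating $h=R(\lambda,\mathscr{B})\mathscr{A}h$ pushes an eigenfunction from $\mathcal{X}=L^1(e^{a|v|}\d v)$ into the Gaussian space $\mathbb{H}$. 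This bootstrap cannot work: $R(\lambda,\mathscr{B})$ is multiplication by $(\lambda+2\mathbf{\Sigma}(\M)(v))^{-1}\simeq\langle v\rangle^{-1}$, so each iteration gains one polynomial weight at best, and the gain operators are \emph{not} ``Gaussian-tail-producing'' on $\mathcal{X}$. Indeed, by energy conservation a large post-collisional speed $|v|$ can be produced by a pair with $|v_*|$ small and $|w_*|\simeq|v|$ large, weighted only by $h(w_*)$; concretely, the kernel representation \eqref{k} gives $\Q_1^{+}(h,\M)(v)\gtrsim\int_{|w-v|\le 1}|v-w|^{-1}h(w)\,\d w$, so $\Q_1^{+}(h,\M)$ retains the exponential tail of $h$ and never acquires Maxwellian decay. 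Consequently no finite number of iterations lands in $L^2(\M^{-1})$, and you cannot exclude extra eigenvalues of $\mathscr{L}_1$ in $\mathcal{X}$ inside the strip $-\nu<\Re\lambda<0$ that are invisible in $\mathbb{H}$; the identification of the kernel and the transfer of semisimplicity suffer from the same defect. (Your claim that $\mathscr{A}$ is $\mathscr{B}$-weakly-compact is also only justified in the paper for $\L^{+}$ and for the rank-one-dominated term $\M\,\mathbf{\Sigma}(\cdot)$, not for $\Q_1^{+}(\M,\cdot)$, for which only boundedness on $\mathcal{X}$ is established.)

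The repair is precisely the design choice of \cite{GMM} recalled in Section \ref{sec:spectr-elast}: truncate in velocity \emph{before} applying the gain. With $\mathcal{A}_{1}f=\mathcal{T}^{+}_{1}(\chi_{B_{R}}f)+\L^{+}(\chi_{B_{R}}f)-\M\,\mathbf{\Sigma}(f)$ and $\mathcal{B}_{1}=\mathscr{L}_{1}-\mathcal{A}_{1}$, the input $\chi_{B_R}f$ is compactly supported, so the convolution estimates of Proposition \ref{propoAlo} (in their $L^2(M^{-1})$ form, as in the Lemma of Section 3.2) show that $\mathcal{A}_1$ maps $\mathcal{X}$ boundedly into the Gaussian-weighted Hilbert space in \emph{one} step --- no iteration and no tail upgrade of a general $h\in\mathcal{X}$ is needed. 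The price is that the large-velocity remainder $\mathcal{T}_1^{+}(\chi_{B_R^{c}}f)+\L^{+}(\chi_{B_R^{c}}f)$ stays in $\mathcal{B}_1$, and one must prove the hypodissipativity estimate \eqref{Bdiss}, i.e.\ that for $R$ large this remainder is dominated by the loss term $-(\mathbf{\Sigma}+\sigma_1)f$ in the $L^1(m^{-1})$ pairing. That estimate, rather than any regularising property of $\Q_1^{+}$ on exponentially decaying data, is the true workhorse of the enlargement; your proposal as written replaces it with a tail-improvement claim that is false.
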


Let us spend a few words on the strategy used to prove the above
result since we will use several of the tools involved to study the
spectral properties of the linearised semigroup in the next section.
The proof of the above result is related to a general strategy
introduced in \cite{GMM} which consists in deducing the spectral
properties in $L^{1}$ from the much easier spectral analysis in
$L^{2}$. The existence of a spectral gap for the linearised collision
operator in $\H=L^{2}(\M^{-1})$ is relatively easy to obtain through a
suitable Poincar\'e-like inequality and the task is to prove that the
linearised collision operator in $\mathcal{X}$ can be deduced from the
one in the Hilbert setting. This is done thanks to a suitable
splitting of the linearised operator as
$$\mathscr{L}_{1}=\mathcal{A}_{1}+ \mathcal{B}_{1}$$
where
\begin{enumerate}[(i)]
\item $\mathcal{A}_{1}\::\:\mathcal{X} \to \H$ is bounded;
\item the operator
  $\mathcal{B}_{1}\::\:\D(\mathcal{B}_{1}) \to \mathcal{X}$ (with
  $\D(\mathcal{B}_{1})=\mathcal{Y}$) is $\beta$-dissipative for some
  positive $\beta >0$, i.e.
  \begin{equation}
    \label{Bdiss}
    \IR \mathrm{sign}f(v) \mathcal{B}_{1}f(v)m^{-1}(v)\d v
    \leq -\beta  \|f\|_{\mathcal{Y}}
    \qquad \forall f \in \mathcal{Y}.
  \end{equation}
\end{enumerate}
Under these conditions \cite{GMM} asserts that the spectrum of
$\mathscr{L}_{1}$ in $\mathcal{X}$ will be the same of that in $\H$.

\medskip

To be more precise, the splitting is as follows. Let us introduce the linearised Boltzmann operator
$$\mathcal{T}_{1}f=\Q_{1}(\M,f)+\Q_{1}(f,\M)$$
so that $\mathscr{L}_{1}=\mathcal{T}_{1}+\L$. Clearly, 
$$\mathcal{T}_{1}f=\mathcal{T}_{1}^{+}f-\sigma_{1}(v)f(v)-\M(v)\,\int_{\R^{3}}\,f(w)|v-w|\d w, \qquad \text{ while }  \qquad\L f(v)=\L^{+}(f)-\mathbf{\Sigma}(v)f(v)$$
and the splitting consists in setting, for some $R > 0$ large enough so that \eqref{Bdiss} holds, 
$$\mathcal{A}_{1}=\mathcal{A}_{1}^{1}+\mathcal{A}_{1}^{2}$$
with 
$$\mathcal{A}_{1}^{1}f=\mathcal{T}^{+}_{1}(\chi_{B_{R}}f)+\L^{+}(\chi_{B_{R}}f), \qquad \mathcal{A}_{1}^{2}f(v)=-\M(v)\,\int_{\R^{3}}\,f(w)|v-w|\d w$$
where $B_{R}$ is the open ball in $R^{3}$ with radius $R >0$ and center $0$.  Then, simply sets $\mathcal{B}_{1}=\mathscr{L}_{1}-\mathcal{A}_{1}.$
Notice that, in the above inequality \eqref{Bdiss}, the constant $\beta >0$ can be chosen as 
$$\beta=\underline{\Sigma}+\underline{\sigma_{1}}+\varepsilon$$
for some arbitrarily small $\varepsilon > 0$ where
$$\underline{\Sigma}=\inf_{v \in \R^{3}}\dfrac{\mathbf{\Sigma}(v)}{1+|v|} >0 \qquad \text{ and } \qquad \underline{\sigma_{1}}=\inf_{v \in \R^{3}}\dfrac{\sigma_{1}(v)}{1+|v|} >0.$$
Notice $\beta$ does not depend on $R$.

\section{Spectral analysis of the linearised operator and its
  associated semigroup}

We recall that for $\alpha \in (\alpha_0,1]$, ${F}_\alpha$ denotes the
unique steady state with unit mass, solution to \eqref{equi}. In order
to study the stability of $\fe$ for $\alpha$ close to $1$ we will
first prove that the following operator has a spectral gap in
$\mathcal{X}$:
\begin{equation}
  \label{eq:Lalpha-Falpha}
  \mathscr{L}_{\alpha} (h)
  := \mathcal{Q}_\alpha(h,\fe) + \mathcal{Q}_\alpha(\fe,h)
  + \L(h)
  \qquad (h \in \mathcal{Y}).
\end{equation}
Notice that thanks to Proposition \ref{propoAlo} the above expression
is well defined and belongs to $\mathcal{X}$ whenever
$h \in \mathcal{Y}$. It is fundamental here that the domain of
$\mathscr{L}_{\alpha}$ in $\mathcal{X}$ \emph{does not depend on}
$\alpha$, i.e.
\begin{equation*}
  \D(\mathscr{L}_{\alpha})
  =
  \D(\mathscr{L}_{1})
  = \mathcal{Y} \qquad \forall \alpha \in (\alpha_{0},1].
\end{equation*}
This is in major contrast with the situations investigated in
\cite{MiMo2,MiMo3} where the forcing term was a differential operator
for which the domain of the associated linearised operator involved
Sobolev norms.

Since $\mathscr{L}_\alpha$ is the linearisation of the nonlinear
operator $\Q_\alpha(f,f) + \L(f)$ near its steady state $\fe$, a study
of its spectrum will allow us to perform a perturbative study of the
evolution equation \eqref{BEev}. We deduce from the results of Section
\ref{sec:convFeM} the following technical result stating that
$\mathscr{L}_\alpha$ is close to $\mathscr{L}_1$ for $\alpha$ close to
1. It will play a crucial role in our analysis:

\begin{propo}
  \label{prop:LaL1}
  There exists an explicit function
  $\varpi \colon (\alpha_{0},1] \to \R^{+}$ such that
  $\lim_{\alpha \to 1^{+}}\varpi(\alpha)=0$ and
  \begin{equation*}
    \|\mathscr{L}_{\alpha}(h)-\mathscr{L}_{1}(h)\|_{\mathcal{X}}
    \leq \varpi(\alpha)\,\|h\|_{\mathcal{Y}}
    \qquad \forall h \in \mathcal{Y}.
  \end{equation*}
\end{propo}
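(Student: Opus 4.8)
The plan is to exploit the fact that the scattering operator $\L$ enters $\mathscr{L}_{\alpha}$ and $\mathscr{L}_{1}$ identically, so it cancels in the difference and the whole task reduces to controlling the quadratic parts:
$$\mathscr{L}_{\alpha}(h)-\mathscr{L}_{1}(h)=\big[\Q_{\alpha}(h,\fe)-\Q_{1}(h,\M)\big]+\big[\Q_{\alpha}(\fe,h)-\Q_{1}(\M,h)\big].$$
For each bracket I would insert the intermediate operator $\Q_{1}(\cdot,\fe)$, resp. $\Q_{1}(\fe,\cdot)$, and use bilinearity to separate a \emph{coefficient-change} contribution from a \emph{steady-state-change} contribution:
\begin{align*}
  \Q_{\alpha}(h,\fe)-\Q_{1}(h,\M)
  &=\big[\Q_{\alpha}(h,\fe)-\Q_{1}(h,\fe)\big]+\Q_{1}(h,\fe-\M),\\
  \Q_{\alpha}(\fe,h)-\Q_{1}(\M,h)
  &=\big[\Q_{\alpha}(\fe,h)-\Q_{1}(\fe,h)\big]+\Q_{1}(\fe-\M,h).
\end{align*}

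The two steady-state-change terms are the easy ones: by Proposition \ref{propoAlo} (used with $\alpha=1$, which is legitimate since the constant there is uniform in $\alpha$ and passes to the limit) their $\mathcal{X}$-norms are bounded by $C\,\|h\|_{\mathcal{Y}}\,\|\fe-\M\|_{\mathcal{Y}}$, and Theorem \ref{limit1} upgrades this to $C\,\eta_{1}(\alpha)\,\|h\|_{\mathcal{Y}}$ with $\eta_{1}(\alpha)\to0$ as $\alpha\to1$.

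For the coefficient-change terms the key observation is that the loss part of the collision operator is independent of the restitution coefficient: $\Q_{\alpha}^{-}(f,g)(v)=f(v)\,\mathbf{\Sigma}(g)(v)$ carries no $\alpha$. Hence both brackets collapse to differences of the \emph{gain} operators, $\Q_{\alpha}^{+}(h,\fe)-\Q_{1}^{+}(h,\fe)$ and $\Q_{\alpha}^{+}(\fe,h)-\Q_{1}^{+}(\fe,h)$, to which Proposition \ref{elasticMM} applies. The delicate point is to place the $W^{1,1}_{1}(m^{-1})$-regularity on the fixed function $\fe$ and retain only the $\mathcal{Y}$-norm of $h$: applying the \emph{second} estimate of Proposition \ref{elasticMM} to the first difference (where $\fe$ sits in the second slot) and the \emph{first} estimate to the second difference (where $\fe$ sits in the first slot), both are bounded by $p(1-\alpha)\,\|\fe\|_{W^{1,1}_{1}(m^{-1})}\,\|h\|_{\mathcal{Y}}$, with $p(1-\alpha)\to0$.

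Collecting the four estimates gives the claim with $\varpi(\alpha)=2\,C_{F}\,p(1-\alpha)+C\,\eta_{1}(\alpha)$, where $C_{F}:=\sup_{\alpha\in(\alpha_{0},1]}\|\fe\|_{W^{1,1}_{1}(m^{-1})}$, and $\varpi(\alpha)\to0$ as $\alpha\to1^{+}$. The one step demanding genuine care — and the main obstacle — is the finiteness of $C_{F}$, i.e. a bound on the weighted $W^{1,1}$-norm of $\fe$ that is uniform in $\alpha$ near $1$. This does \emph{not} follow from the pointwise Maxwellian bounds alone, which only control $\|\fe\|_{\mathcal{Y}}$; it requires the uniform smoothness estimates for the steady state established in \cite{BiCaLo,BCL} together with the weighted exponential tail bound of Theorem \ref{tailT}. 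Once these are invoked, the derivative of $\fe$ is controlled uniformly in $\alpha$, $C_{F}<\infty$, and the proof is complete.
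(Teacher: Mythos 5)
Your argument is correct in outline and rests on the same three ingredients as the paper's proof (Proposition \ref{propoAlo}, Proposition \ref{elasticMM} and Theorem \ref{limit1}), but you telescope through the intermediate operators $\Q_1(\cdot,\fe)$ and $\Q_1(\fe,\cdot)$, whereas the paper telescopes through $\Q_\alpha(\cdot,\M)$ and $\Q_\alpha(\M,\cdot)$, i.e.\ it writes $\Q_\alpha(h,\fe)-\Q_1(h,\M)=\Q_\alpha(h,\fe-\M)+\bigl[\Q_\alpha(h,\M)-\Q_1(h,\M)\bigr]$ and similarly for the other slot. The two splittings look symmetric, but the choice has a real consequence: in the paper's version the coefficient-change brackets carry the \emph{Maxwellian} in the fixed slot, so Proposition \ref{elasticMM} only requires $\|\M\|_{W^{1,1}_1(m^{-1})}<\infty$, which is immediate for an explicit Gaussian. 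In your version the fixed slot carries $\fe$, and you correctly identify that you then need $\sup_{\alpha\in(\alpha_0,1]}\|\fe\|_{W^{1,1}_1(m^{-1})}<\infty$ --- a uniform-in-$\alpha$ weighted Sobolev bound on the steady state that is nowhere quoted in the present paper and must be imported from the regularity analysis of \cite{BiCaLo,BCL}. Such uniform smoothness estimates are indeed established there, so your argument does close, but it makes the proposition depend on a nontrivial external input that the paper's ordering of the telescoping avoids entirely; if you keep your decomposition you should state and cite that bound explicitly rather than leave it as a remark. One point in your favour: you make explicit that the loss parts $f\,\mathbf{\Sigma}(g)$ are independent of $\alpha$, so the coefficient-change brackets reduce to differences of gain operators; this step is needed to invoke Proposition \ref{elasticMM} (which is stated for $\Q^+_\alpha$ only) and the paper leaves it implicit.
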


\begin{proof}
  A fundamental observation is that the domain of
  $\mathscr{L}_{\alpha}$ is actually independent of $\alpha$,
  i.e. $\D(\mathscr{L}_{\alpha})=\mathcal{Y}$ for any
  $\alpha \in (\alpha_{0},1]$. Let $h \in \mathcal{Y}$ be fixed. We
  have
  \begin{multline*}
   \|\mathscr{L}_{\alpha}(h) - \mathscr{L}_1(h)\|
    _{ {\mathcal{X}}}
    =
    \|\mathcal{Q}_\alpha(\fe, h) + \mathcal{Q}_\alpha(h, \fe)
    - \mathcal{Q}_1(\mathcal{M}, h) - \mathcal{Q}_1(h, \mathcal{M})
    \|_{ {\mathcal{X}}}
    \\
    \leq
    \|\mathcal{Q}_\alpha(\fe - \mathcal{M}, h)
    + \mathcal{Q}_\alpha(h, \fe-\mathcal{M})
    \|_{ {\mathcal{X}}}
    \\
    +
    \|
    \mathcal{Q}_\alpha(\mathcal{M}, h) - \mathcal{Q}_1(\mathcal{M}, h)
    \|_{ {\mathcal{X}}}+
    \|
    \mathcal{Q}_\alpha(h, \mathcal{M}) - \mathcal{Q}_1(h, \mathcal{M})
    \|_{ {\mathcal{X}}}.
   \end{multline*}
   Thus, using Proposition \ref{propoAlo} and Theorem \ref{limit1},
   one sees that there exists some constant $C >0$ and some explicit
   function $\eta_0(\alpha)$ with $\lim_{\alpha \to 1}\eta_0(\alpha)=0$ such that
   \begin{equation*}
     \|\mathcal{Q}_\alpha(\fe - \mathcal{M}, h)
     + \mathcal{Q}_\alpha(h, \fe-\mathcal{M}) \|_{ {\mathcal{X}}}
     \leq C
     \|\fe - \mathcal{M}\|_{{\mathcal{Y}}} \|h\|_{ {\mathcal{Y}}} \leq
     \eta_0(\alpha) \|h\|_{\mathcal{Y}}.
   \end{equation*}
   In the same way, according to Proposition \ref{elasticMM},
   \begin{equation*}
     \|\mathcal{Q}_\alpha(\mathcal{M}, h) - \mathcal{Q}_1(\mathcal{M}, h)\|_{ {\mathcal{X}}}+ \|\mathcal{Q}_\alpha(h, \mathcal{M})
     - \mathcal{Q}_1(h, \mathcal{M}) \|_{ {\mathcal{X}}} \leq
     \eta_1(\alpha)\|h\|_{\mathcal{Y}}
   \end{equation*}
   some explicit function $\eta_1(\alpha)$ with
   $\lim_{\alpha \to 1}\eta_1(\alpha)=0$. These two estimates give the
   result with $\varpi(\cdot)=\eta_{0}(\cdot)+\eta_{1}(\cdot).$
 \end{proof}
        
\subsection{Spectral gap of the linearised operator}
\label{sec:spectral-gap}

We investigate here the spectral properties of $\mathscr{L}_{\alpha}$
in $\mathcal{X}.$ We begin by studying the kernel of
$\mathscr{L}_\alpha$:

\begin{propo}
  \label{lem:simple}
  There exists some explicit $\alpha_{1} \in (\alpha_{0},1]$ such
  that, given $\alpha \in (\alpha_{1},1]$, $0$ is a simple and
  isolated eigenvalue of $\mathscr{L}_{\alpha}$ and there exists
  $G_{\alpha} \in \mathcal{Y}$ with unit mass and such that
  $$\mathscr{N}(\mathscr{L}_{\alpha})=\mathrm{Span}(G_{\alpha}) \qquad \forall \alpha \in (\alpha_{1},1].$$
  We denote then by $\mathbb{P}_{\alpha}$ the spectral projection
  associated to the zero eigenvalue of $\mathscr{L}_{\alpha}$. Then,
  for any $f \in \mathcal{X}$, one has
  $\mathbb{P}_{\alpha}f=\varrho_{f}\,G_{\alpha}$ with
  $\varrho_{f}:=\int_{\R^{3}}f(v)\d v.$ In particular,
  $\mathrm{Range}(\mathbb{I}-\mathbb{P}_{\alpha})=\widehat{\mathcal{X}}$
  for any $\alpha \in (\alpha_{1},1]$.
\end{propo}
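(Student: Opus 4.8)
The plan is to transport the spectral picture of $\mathscr{L}_{1}$ given by Theorem \ref{spect} to $\mathscr{L}_{\alpha}$ for $\alpha$ close to $1$ by a perturbation argument, using Proposition \ref{prop:LaL1} to control the size of the perturbation. Recall that Theorem \ref{spect} provides a spectral gap $\nu>0$ for $\mathscr{L}_{1}$, with $\mathscr{N}(\mathscr{L}_{1})=\mathrm{Span}(\M)$ and $\mathscr{L}_{1}\colon\widehat{\mathcal{Y}}\to\widehat{\mathcal{X}}$ invertible; since $\int_{\R^{3}}\M\,\d v=1$ this yields the splitting $\mathcal{X}=\mathrm{Span}(\M)\oplus\widehat{\mathcal{X}}$ and identifies the rank-one spectral projection of $\mathscr{L}_{1}$ at $0$ as $\mathbb{P}_{1}f=\varrho_{f}\,\M$ (in particular $0$ is algebraically simple for $\mathscr{L}_{1}$, since a generalized eigenvector $g$ with $\mathscr{L}_{1}g=\M$ would force $1=\int_{\R^{3}}\M\,\d v=\int_{\R^{3}}\mathscr{L}_{1}g\,\d v=0$). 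The structural fact driving everything is mass conservation: because $\int_{\R^{3}}\mathscr{L}_{\alpha}h\,\d v=0$ for all $h\in\mathcal{Y}$, one has $\mathrm{Range}(\mathscr{L}_{\alpha})\subseteq\widehat{\mathcal{X}}\subsetneq\mathcal{X}$, so $\mathscr{L}_{\alpha}$ is not surjective and $0\in\mathfrak{S}(\mathscr{L}_{\alpha})$ for every $\alpha\in(\alpha_{0},1]$; this pins the perturbed eigenvalue exactly at $0$ rather than merely near it.

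Next I would fix $r\in(0,\nu)$ and take the circle $\Gamma=\{z\in\C:|z|=r\}$, which lies in $\rho(\mathscr{L}_{1})$ and encloses only the eigenvalue $0$. Writing $\mathscr{L}_{\alpha}-z=\big[\mathbb{I}+(\mathscr{L}_{\alpha}-\mathscr{L}_{1})(\mathscr{L}_{1}-z)^{-1}\big](\mathscr{L}_{1}-z)$ and invoking Proposition \ref{prop:LaL1}, the operator $(\mathscr{L}_{\alpha}-\mathscr{L}_{1})(\mathscr{L}_{1}-z)^{-1}\in\mathscr{B}(\mathcal{X})$ has norm at most $\varpi(\alpha)\,\sup_{z\in\Gamma}\|(\mathscr{L}_{1}-z)^{-1}\|_{\mathscr{B}(\mathcal{X},\mathcal{Y})}$, which becomes $<1$ once $\alpha$ is close enough to $1$; a Neumann series then shows $\Gamma\subset\rho(\mathscr{L}_{\alpha})$ with $(\mathscr{L}_{\alpha}-z)^{-1}$ bounded uniformly in $z\in\Gamma$ and converging to $(\mathscr{L}_{1}-z)^{-1}$ as $\alpha\to1$. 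I expect this to be the delicate step: the estimate needs $(\mathscr{L}_{1}-z)^{-1}$ to be bounded as a map $\mathcal{X}\to\mathcal{Y}$ uniformly on $\Gamma$, a regularising bound that does not follow from Theorem \ref{spect} alone but from the dissipative splitting $\mathscr{L}_{1}=\mathcal{A}_{1}+\mathcal{B}_{1}$ of Subsection \ref{sec:spectr-elast}; equivalently, this is the statement that the gap between $\mathscr{L}_{\alpha}$ and $\mathscr{L}_{1}$ in the sense of \cite{kato} tends to $0$, which is exactly what the perturbation results of Appendix \ref{sec:perturbation} are meant to supply.

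With $\Gamma\subset\rho(\mathscr{L}_{\alpha})$ in hand, I would introduce the Riesz spectral projection $\mathbb{P}_{\alpha}=\frac{1}{2\pi i}\oint_{\Gamma}(\mathscr{L}_{\alpha}-z)^{-1}\,\d z$ and pass to the limit in the resolvent to obtain $\|\mathbb{P}_{\alpha}-\mathbb{P}_{1}\|_{\mathscr{B}(\mathcal{X})}\to0$ as $\alpha\to1$. Since $\mathbb{P}_{1}$ has rank one, the classical fact that two projections at distance $<1$ have equal rank (\cite[Lemma I.4.10]{kato}) gives $\mathrm{rank}\,\mathbb{P}_{\alpha}=1$ for $\alpha$ near $1$, so the portion of $\mathfrak{S}(\mathscr{L}_{\alpha})$ inside $\Gamma$ has total algebraic multiplicity one. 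As $0$ belongs to this portion, it is the unique eigenvalue enclosed and it is algebraically simple; hence $0$ is a simple, isolated eigenvalue and $\mathscr{N}(\mathscr{L}_{\alpha})=\mathrm{Range}(\mathbb{P}_{\alpha})$ is one-dimensional and contained in $\D(\mathscr{L}_{\alpha})=\mathcal{Y}$. To produce a distinguished generator I would set $G_{\alpha}:=\mathbb{P}_{\alpha}\M\in\mathcal{Y}$; since $\mathbb{P}_{\alpha}\M\to\mathbb{P}_{1}\M=\M$ in $\mathcal{X}$ and mass is $\mathcal{X}$-continuous, $\int_{\R^{3}}\mathbb{P}_{\alpha}\M\,\d v\to1$, so this quantity is nonzero for $\alpha$ near $1$ and, after renormalising, $G_{\alpha}$ spans $\mathscr{N}(\mathscr{L}_{\alpha})$ and has unit mass.

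Finally I would identify $\mathbb{P}_{\alpha}$ through its biorthogonal form. Because $0$ is algebraically simple, the one-dimensional eigenspace of the adjoint $\mathscr{L}_{\alpha}^{\ast}$ at $0$ is spanned by any nonzero left eigenvector; mass conservation $\int_{\R^{3}}\mathscr{L}_{\alpha}h\,\d v=0$ says precisely that the mass functional $\Phi\colon f\mapsto\varrho_{f}$ satisfies $\mathscr{L}_{\alpha}^{\ast}\Phi=0$, so this eigenspace equals $\mathrm{Span}(\Phi)$. The rank-one Riesz projection attached to a simple eigenvalue with right eigenvector $G_{\alpha}$ and left eigenvector $\Phi$ normalised by $\langle\Phi,G_{\alpha}\rangle=\varrho_{G_{\alpha}}=1$ is then $\mathbb{P}_{\alpha}f=\langle\Phi,f\rangle\,G_{\alpha}=\varrho_{f}\,G_{\alpha}$, as claimed. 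Consequently $\mathrm{Range}(\mathbb{I}-\mathbb{P}_{\alpha})=\mathscr{N}(\mathbb{P}_{\alpha})=\{f\in\mathcal{X}:\varrho_{f}=0\}=\widehat{\mathcal{X}}$, and one may take $\alpha_{1}$ to be any threshold beyond which all the smallness conditions above (explicit through $\varpi$) are met.
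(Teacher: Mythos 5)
Your proof is correct, and its overall skeleton coincides with the paper's: perturb the spectral picture of $\mathscr{L}_{1}$ (Theorem \ref{spect}) using the closeness estimate of Proposition \ref{prop:LaL1}, isolate the spectrum of $\mathscr{L}_{\alpha}$ inside a small circle around the origin, show the corresponding Riesz projection is rank one by continuity from $\mathbb{P}_{1}$, and then use mass conservation to identify the eigenvalue and the projection. The differences are in three sub-steps, and in each case your route is legitimate and arguably cleaner. First, where you run a direct Neumann-series/resolvent-factorisation argument on the circle $\Gamma$ (relying on the uniform bound of $R(z,\mathscr{L}_{1})$ from $\mathcal{X}$ to $\mathcal{Y}$, i.e.\ the equivalence of $\|\cdot\|_{\mathcal{Y}}$ with the graph norm), the paper instead packages the same relative-boundedness input into Kato's notion of operator gap and invokes Theorems \ref{theo:kato-delta} and \ref{theo:kato}; the content is the same, yours is more self-contained, the paper's makes the convergence $\|\mathbb{P}_{\alpha}-\mathbb{P}_{1}\|\to 0$ come for free from the cited theorem. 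Second, to pin the enclosed eigenvalue at $0$ you observe that $\mathrm{Range}(\mathscr{L}_{\alpha})\subseteq\widehat{\mathcal{X}}$ forces $0\in\mathfrak{S}(\mathscr{L}_{\alpha})$ for \emph{every} $\alpha$, so the unique (simple) point of spectrum inside $\Gamma$ must be $0$; the paper instead argues by contradiction, integrating the eigenvalue equation to show a nonzero eigenvalue would force its eigenfunctions to have zero mass, which is incompatible with $\|\mathbb{P}_{\alpha}-\mathbb{P}_{1}\|<1$. Your observation is more direct and even explains why the footnote's worry (that $r$ could not simply be sent to $0$) is moot. Third, to get $\mathbb{P}_{\alpha}f=\varrho_{f}G_{\alpha}$ you use the biorthogonal form of a rank-one spectral projection with the mass functional as left eigenvector; the paper reaches the same identity via the inclusion $\mathrm{Range}(\mathbb{I}-\mathbb{P}_{\alpha})\subset\mathrm{Range}(\mathscr{L}_{\alpha})\subset\widehat{\mathcal{X}}$ from \cite[Eq.~(6.34)]{kato}. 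Your adjoint argument is fine but tacitly uses that $\mathscr{N}(\mathscr{L}_{\alpha}^{\ast})$ is contained in the range of the (rank-one) adjoint spectral projection, a small point worth spelling out; the paper's inclusion argument avoids the adjoint altogether.
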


\begin{proof}
  For any $\alpha \in (\alpha_{0},1]$, set for simplicity
  $T_\alpha=\mathscr{L}_1-\mathscr{L}_\alpha$ with domain
  $\D(T_{\alpha})=\mathcal{Y}.$ From Proposition \ref{prop:LaL1},
  $$\|T_\alpha h\|_\mathcal{X} \leq \varpi(\alpha)\|h\|_\mathcal{Y}  \qquad \forall h \in \D(\mathscr{L}_{1})=\mathcal{Y}.$$
  Since $\|\cdot\|_{\mathcal{Y}}$ is equivalent to the graph norm of
  $\D(\mathscr{L}_{1})$, there exists $c >0$ such that
  $$\|h\|_{\mathcal{Y}} \leq c\left(\|h\|_{\mathcal{X}}+\|\mathscr{L}_{1}h\|_{\mathcal{X}}\right)\qquad \forall h \in \mathcal{Y}$$
  from which the above inequality reads
  $$\|T_{\alpha} h\|_{\mathcal{X}}\leq a\|h\|_{\mathcal{X}}+b\|\mathscr{L}_{1}h\|_{\mathcal{X}} \qquad \forall h \in \D(\mathscr{L}_{1})$$
  with $b=c\,\varpi(\alpha)$. Since
  $\lim_{\alpha \to 1^{+}}\varpi(\alpha)=0$, this makes $T_\alpha$ a
  $\mathscr{L}_1$-bounded operator with relative bound $b <1$ for any
  $\alpha \in (\alpha_{0}',1]$ for some explicit
  $\alpha_{0}' \in (\alpha_0,1].$ In particular, according to
  \cite[Theorem 2.14, p. 203]{kato} (cf. Theorem
  \ref{theo:kato-delta}), the gap
  $\widehat{\delta}(\mathscr{L}_{\alpha},\mathscr{L}_{1})$ between
  $\mathscr{L}_\alpha=\mathscr{L}_1+T_\alpha$ and $\mathscr{L}_1$ (as
  defined in \cite[IV.2.4, p. 201]{kato}; see Appendix
  \ref{sec:perturbation}) is less than
  $\frac{\sqrt{2 b^{2}}}{1-b} =
  \frac{\sqrt{2}c\varpi(\alpha)}{1-c\varpi(\alpha)}$.
  Now, recall that the spectrum of $\mathscr{L}_{1}$ splits as
$$\mathfrak{S}(\mathscr{L}_{1})=\{0\} \cup \mathfrak{S}'(\mathscr{L}_{1})$$
where
$\mathfrak{S}'(\mathscr{L}_{1}) \subset \{z \in \mathbb{C}\,;\,\Re z
\leq -\nu\}$.
Denoting by $\mathbb{P}_{1}$ the spectral projection associated to the
$0$ eigenvalue, one gets that
$\mathcal{X}=\mathcal{X}^{''} \oplus \mathcal{X}^{'}$ with
$\mathcal{X}^{''}=\mathrm{Range}(\mathbb{P}_{1})$ and
$\mathcal{X}^{'}=\mathrm{Range}(\mathbb{I-P}_{1})$ with moreover
$\mathfrak{S}({\mathscr{L}_{1}}\vert_{ \mathcal{X}^{''}})=\{0\}$ and
$\mathfrak{S}({\mathscr{L}_{1}}\vert_{\mathcal{X}^{'}})=\mathfrak{S}'(\mathscr{L}_{1}).$
In particular, the two above parts of $\mathfrak{S}(\mathscr{L}_{1})$
are separated by the closed curve
$\gamma_{r}=\{z \in \mathbb{C}\,;\,|z|=r\}$, for any $r \in (0,\nu)$.
Then, according to \cite[Theorem 3.16, p. 212 \& IV.3.5]{kato} (see
Theorem \ref{theo:kato}), there exists $\delta > 0$ such that the same
separation of the spectrum and decomposition of $\mathcal{X}$ hold for
any operator $S \in \mathscr{C}(\mathcal{X})$ for which the gap
$\widehat{\delta}(S,\mathscr{L}_{1}) < \delta.$ Choosing now
$\alpha_{0}'' \in (\alpha_{0}',1]$ such that
$\widehat{\delta}(\mathscr{L}_{\alpha},\mathscr{L}_{1}) < \delta $ as
soon as $\alpha \in (\alpha_{0}'',1]$, one gets therefore that the
spectrum of $\mathfrak{S}(\mathscr{L}_{\alpha})$ can be separated by
$\gamma_{r}$, i.e. it splits as
$$\mathfrak{S}(\mathscr{L}_{\alpha})=\mathfrak{S}''(\mathscr{L}_{\alpha}) \cup \mathfrak{S}'(\mathscr{L}_{\alpha}) \qquad \forall \alpha \in (\alpha_{0}'',1]$$
where
$\mathfrak{S}''(\mathscr{L}_{\alpha}) \subset \{z \in
\mathbb{C}\,;\,|z| < r\}$
while
$\mathfrak{S}'(\mathscr{L}_{\alpha}) \subset \{z \in \mathbb{C}\,;\,|
z| > r\}$.
Moreover, the space $\mathcal{X}$ splits as
$\mathcal{X}=\mathcal{X}_{\alpha}^{''}\oplus \mathcal{X}_{\alpha}^{'}$
with
$\mathfrak{S}({\mathscr{L}_{\alpha}}\vert_{
  \mathcal{X}_{\alpha}^{''}})=\mathfrak{S}^{''}(\mathscr{L}_{\alpha})$
and
$\mathfrak{S}({\mathscr{L}_{\alpha}}\vert_{\mathcal{X}_{\alpha}^{'}})=\mathfrak{S}'(\mathscr{L}_{\alpha}).$
Moreover, still using Theorem \ref{theo:kato},
$\mathrm{dim}(\mathcal{X}_{\alpha}^{''})=\mathrm{dim}(\mathcal{X}_{\alpha}^{''})=1$. This
shows that actually
$$\mathfrak{S}^{''}(\mathscr{L}_{\alpha})=\{\mu_{\alpha}\}$$
where $\mu_{\alpha}$ is a \emph{simple eigenvalue} of
$\mathscr{L}_{\alpha}$ with $|\mu_{\alpha}| < r$ for any
$\alpha \in (\alpha_{0}'',1]$. Let us show that actually
$\mu_{\alpha}=0$ (at least for sufficiently large $\alpha$)
\footnote{Notice that this cannot be deduced directly from the fact
  that $r > 0$ can be chosen arbitrarily small since the range of
  parameters $(\alpha_{0}'',1]$ for which the above splitting holds
  actually depends on $r$ through the parameter $\delta$ in
  \ref{theo:kato}.}. Define $\mathbb{P}_{\alpha}$ as the spectral
projection operator associated to $\mu_{\alpha}$, i.e.
$$\mathbb{P}_{\alpha}=\frac{1}{2 \pi i}\oint_{\gamma_{r}} R(\xi,\mathscr{L}_{\alpha}) \d \xi \qquad \forall \alpha \in (\alpha_{1},1].$$
According to \ref{theo:kato}, one also has
$\lim_{\alpha \to
  1}\left\|\mathbb{P}_{\alpha}-\mathbb{P}_{1}\right\|_{\mathscr{B}(\mathcal{X})}=0$
with an explicit rate, from which there exists some explicit
$\alpha_{1} \in (\alpha_{0}'',1]$ such that
\begin{equation}
  \label{eq:normal}
  \|\mathbb{P}_{\alpha}f -\mathbb{P}_{1}f\|_{\mathcal{X}} < 1
  \qquad \text{ for all } \alpha \in (\alpha_{1},1], \, f \in \mathcal{X}.
\end{equation}
Let us prove that $\mu_{\alpha}=0$ for any
$\alpha \in (\alpha_{1},1]$. Let us argue by contradiction and assume
there exists $\alpha \in (\alpha_{1},1]$ for which
$\mu_{\alpha}\neq 0$. Let $\phi_{\alpha}$ be some normalized
eigenfunction of $\mathscr{L}_{\alpha}$ associated to $\mu_{\alpha}$,
i.e. $\phi_{\alpha} \in \mathcal{Y} \setminus \{0\}$ satisfies
$\mathscr{L}_{\alpha}
\phi_{\alpha}=\mu_{\alpha}\,\phi_{\alpha}$.
Integrating over $\R^{3}$ we get that
$$\IR \phi_{\alpha}(v)\d v=0.$$
For any $f \in \mathcal{X}$, there exists $\beta=\beta(\alpha,f)$ such
that $\mathbb{P}_{\alpha}f=\beta \phi_{\alpha}$ while
$\mathbb{P}_{1}f=\varrho_{f}\M$. In particular, one sees that
\begin{equation*}
  \IR \mathbb{P}_{\alpha}f \d v=0
  \qquad \text{ while }
  \qquad \IR \mathbb{P}_{1} f \d v = \varrho_{f} \qquad \forall f \in \mathcal{X}.
\end{equation*}
This clearly contradicts \eqref{eq:normal}. Therefore, for any
$\alpha \in (\alpha_{1},1]$, $\mu_{\alpha}=0$ and the above reasoning
shows that any associated eigenfunction $\phi_{\alpha}$ is such that
$$\IR \phi_{\alpha}(v)\d v \neq 0.$$
Let then $G_{\alpha}$ be the unique eigenfunction of
$\mathscr{L}_{\alpha}$ associated to the $0$ eigenvalue with
$\IR G_{\alpha}(v)\d v=1.$ From \cite[Eq. (6.34), p. 180]{kato}, one
has
$\mathrm{Range}(\mathbb{I-P}_{\alpha}) \subset
\mathrm{Range}(\mathscr{L}_{\alpha})$
for any $\alpha \in (\alpha_{2},1].$ Since
$\IR \mathscr{L}_{\alpha}f \d v=0$ for any
$f \in \D(\mathscr{L}_{\alpha})$ we get that
$\mathrm{Range}(\mathbb{I-P}_{\alpha}) \subset \widehat{\mathcal{X}}$
for any $\alpha_{2} < \alpha \leq 1.$ Thus, given $f \in \mathcal{X}$,
since $f=\mathbb{P}_{\alpha}f + (\mathbb{I-P}_{\alpha})f$, it holds
$$\varrho_{f}:=\int_{\R^{3}}f \d v=\int_{\R^{3}}\mathbb{P}_{\alpha}f \d v.$$
Since moreover $\mathbb{P}_{\alpha}f=\beta_{f}G_{\alpha}$ for some
$\beta_{f} \in \R$ and $G_{\alpha}$ is normalised, we get that
$\mathbb{P}_{\alpha}f=\varrho_{f}\fe.$ In particular, if
$f \in \widehat{\mathcal{X}}$ then $\mathbb{P}_{\alpha}f=0$ and
$f \in \mathrm{Range}(\mathbb{I-P}_{\alpha})$ which achieves the proof
of the result.\end{proof}

From now on, $\nu > 0$ will denote the size of the spectral gap of
$\mathscr{L}_1$ (see Theorem \ref{spect}). Our main result in this subsection is the following:
\begin{theo}
  \label{thm:spectral-gap-alpha}
  Take $0 < \nu_* < \nu$. There is $\alpha_0 < \alpha_* < 1$, with
  $\alpha_*$ depending on $\nu_*$, such that for all
  $\alpha_* < \alpha \leq 1$, the linear operator
  $\mathscr{L}_{\alpha}$ has a spectral gap of size $\nu_*$. More
  precisely, the spectrum of $\mathscr{L}_{\alpha}$ splits as
  $\mathfrak{S}(\mathscr{L}_{\alpha})=\{0\} \cup
 \mathfrak{S}\left(\mathscr{L}_{\alpha}\vert_{(\mathbb{I-P}_{\alpha})\mathcal{X}}\right)
   $
  with
  \begin{equation}
    \label{Eq:gapp}
    \mathfrak{S}\left(\mathscr{L}_{\alpha}\vert_{\widehat{\mathcal{X}}}\right)
    \subset \{\lambda \in \mathbb{C}\,;\,\Re\lambda \leq -\nu_{*}\}
    \qquad \forall \alpha \in (\alpha_*, 1]
  \end{equation}
 where we recall that $\mathbb{P}_{\alpha}$ denotes the spectral projection associated to the zero eigenvalue of $\mathscr{L}_{\alpha}\,$ and $\mathscr{L}_{\alpha}\vert_{\widehat{\mathcal{X}}}$ denotes the part of $\mathscr{L}_{\alpha}$ on $\widehat{\mathcal{X}}=(\mathbb{I-P}_{\alpha})\mathcal{X}.$
\end{theo}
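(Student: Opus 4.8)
The plan is to show that, for $\alpha$ close to $1$, the half-plane $\{\Re\lambda\geq-\nu_*\}$ meets $\mathfrak{S}(\mathscr{L}_\alpha)$ only at $0$. Together with Proposition~\ref{lem:simple} (which already provides the reducing projection $\mathbb{P}_\alpha$ onto the simple eigenvalue $0$, with $\mathrm{Range}(\mathbb{I}-\mathbb{P}_\alpha)=\widehat{\mathcal{X}}$ and hence $\mathfrak{S}(\mathscr{L}_\alpha\vert_{\widehat{\mathcal{X}}})=\mathfrak{S}(\mathscr{L}_\alpha)\setminus\{0\}$), this gives exactly the claimed splitting. Fix $r\in(0,\nu)$ small enough that, by Proposition~\ref{lem:simple}, $\mathfrak{S}(\mathscr{L}_\alpha)\cap\overline{B(0,r)}=\{0\}$ for all $\alpha\in(\alpha_1,1]$, and set $\Delta=\{\lambda\in\mathbb{C}\,;\,\Re\lambda\geq-\nu_*\}\setminus B(0,r)$. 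Since $\mathscr{L}_1$ has spectral gap $\nu>\nu_*$ we have $\Delta\subset\rho(\mathscr{L}_1)$, and for $\lambda\in\Delta$ the factorisation
$$\lambda-\mathscr{L}_\alpha=\bigl(\mathbb{I}-(\mathscr{L}_\alpha-\mathscr{L}_1)R(\lambda,\mathscr{L}_1)\bigr)(\lambda-\mathscr{L}_1)$$
shows that $\lambda\in\rho(\mathscr{L}_\alpha)$ as soon as $\|(\mathscr{L}_\alpha-\mathscr{L}_1)R(\lambda,\mathscr{L}_1)\|_{\mathscr{B}(\mathcal{X})}<1$. By Proposition~\ref{prop:LaL1} this norm is at most $\varpi(\alpha)\,\|R(\lambda,\mathscr{L}_1)\|_{\mathscr{B}(\mathcal{X},\mathcal{Y})}$, so the whole theorem reduces to the single uniform bound $C_*:=\sup_{\lambda\in\Delta}\|R(\lambda,\mathscr{L}_1)\|_{\mathscr{B}(\mathcal{X},\mathcal{Y})}<\infty$.

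To establish this bound I would exploit the splitting $\mathscr{L}_1=\mathcal{A}_1+\mathcal{B}_1$ of Subsection~\ref{sec:spectr-elast}. Since $\mathcal{B}_1$ is $\beta$-dissipative in the sense of \eqref{Bdiss}, testing the equation $(\lambda-\mathcal{B}_1)g=f$ against $\mathrm{sign}(g)\,m^{-1}$ and using \eqref{Bdiss} gives $\Re\lambda\,\|g\|_{\mathcal{X}}+\beta\,\|g\|_{\mathcal{Y}}\leq\|f\|_{\mathcal{X}}$, whence $\|R(\lambda,\mathcal{B}_1)\|_{\mathscr{B}(\mathcal{X},\mathcal{Y})}\leq(\beta-\nu_*)^{-1}$ uniformly on $\{\Re\lambda\geq-\nu_*\}$; here $\beta>\nu_*$ because $\beta\geq\nu$, the essential spectrum of $\mathscr{L}_1$ being localised by $\mathcal{B}_1$ to the left of $-\beta$ and hence unable to intrude into the gap. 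The identity $R(\lambda,\mathscr{L}_1)=R(\lambda,\mathcal{B}_1)\bigl(\mathbb{I}-\mathcal{A}_1R(\lambda,\mathcal{B}_1)\bigr)^{-1}$ then reduces matters to a uniform bound on $(\mathbb{I}-\mathcal{A}_1R(\lambda,\mathcal{B}_1))^{-1}$ over $\Delta$. On any compact part of $\Delta$ this follows from analyticity of the resolvent together with $\Delta\subset\rho(\mathscr{L}_1)$; the genuinely delicate point, which I expect to be the main obstacle, is the high-frequency regime $|\Im\lambda|\to\infty$.

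For that regime the key claim is that $\mathcal{A}_1R(\lambda,\mathcal{B}_1)\to 0$ in $\mathscr{B}(\mathcal{X})$ as $|\Im\lambda|\to\infty$, uniformly for $\Re\lambda\geq-\nu_*$. This rests on two facts. First, $\mathcal{A}_1$ maps $\mathcal{X}$ into $\H=L^2(\M^{-1})$, and $\H\hookrightarrow\mathcal{X}$ by Cauchy--Schwarz (since $\int\M\,m^{-2}\,\d v<\infty$), so $\mathcal{A}_1$ is in fact bounded on $\mathcal{X}$. Second, by the explicit form of the splitting one has $\mathcal{B}_1f=-(\sigma_1+\mathbf{\Sigma}(\M))f+\bigl[\mathcal{T}_1^++\L^+\bigr]((1-\chi_{B_R})f)$, i.e. $\mathcal{B}_1$ is multiplication by the coercive collision frequency $-(\sigma_1+\mathbf{\Sigma}(\M))$ plus a non-local term of arbitrarily small $\mathscr{B}(\mathcal{Y},\mathcal{X})$-norm for $R$ large. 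The resolvent of the multiplication part obeys $\|R(\lambda,\cdot)\|_{\mathscr{B}(\mathcal{X})}\leq|\Im\lambda|^{-1}$, and a short Neumann-series argument absorbing the small non-local remainder then yields $\|R(\lambda,\mathcal{B}_1)\|_{\mathscr{B}(\mathcal{X})}\leq C\,|\Im\lambda|^{-1}$; composing with the bounded $\mathcal{A}_1$ gives the asserted decay. (Alternatively one writes $\mathcal{A}_1R(\lambda,\mathcal{B}_1)=\int_0^\infty e^{-\lambda t}\mathcal{A}_1e^{t\mathcal{B}_1}\,\d t$ and invokes the Riemann--Lebesgue lemma, using $\|\mathcal{A}_1e^{t\mathcal{B}_1}\|_{\mathscr{B}(\mathcal{X})}\leq Ce^{-\beta t}$.) Consequently there is $M>0$ with $\|\mathcal{A}_1R(\lambda,\mathcal{B}_1)\|_{\mathscr{B}(\mathcal{X})}\leq\tfrac12$ for $|\Im\lambda|\geq M$, so the inverse factor is bounded by $2$ there, and combined with the compact-set bound this proves $C_*<\infty$.

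Finally, I would choose $\alpha_*\in(\alpha_1,1]$, depending on $\nu_*$, so large that $\varpi(\alpha)\,C_*<1$ for all $\alpha\in(\alpha_*,1]$; this is possible and explicit because $\varpi(\alpha)\to0$ as $\alpha\to1$ while $C_*$ depends only on $\nu_*$. For such $\alpha$ the factorisation above gives $\Delta\subset\rho(\mathscr{L}_\alpha)$, that is $\mathfrak{S}(\mathscr{L}_\alpha)\cap\{\Re\lambda\geq-\nu_*\}\subset\overline{B(0,r)}$. Intersecting with $\mathfrak{S}(\mathscr{L}_\alpha)\cap\overline{B(0,r)}=\{0\}$ from Proposition~\ref{lem:simple} yields $\mathfrak{S}(\mathscr{L}_\alpha)\cap\{\Re\lambda\geq-\nu_*\}=\{0\}$, and since $\mathfrak{S}(\mathscr{L}_\alpha\vert_{\widehat{\mathcal{X}}})=\mathfrak{S}(\mathscr{L}_\alpha)\setminus\{0\}$ this is precisely \eqref{Eq:gapp}.
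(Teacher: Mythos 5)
Your argument is correct in substance and rests on the same mechanism as the paper's proof --- the Neumann-series factorisation $\lambda-\mathscr{L}_\alpha=\bigl(\mathbb{I}-(\mathscr{L}_\alpha-\mathscr{L}_1)R(\lambda,\mathscr{L}_1)\bigr)(\lambda-\mathscr{L}_1)$ combined with Proposition \ref{prop:LaL1} and the localisation of the spectrum near $0$ from Proposition \ref{lem:simple} --- but it is packaged differently and, in one respect, more carefully. The paper first passes to the invariant subspace $\widehat{\mathcal{X}}$, where $s(\widehat{\mathscr{L}}_1)=-\nu$, and invokes Lemma \ref{lem:spectral-gap-perturbation}; that lemma's hypothesis \eqref{eq:sg-perturbation-smallness} is stated pointwise in $\lambda$, and its proof only shows that each \emph{fixed} $\lambda$ with $\Re\lambda>s(L_0)$ eventually lies in $\rho(L_\varepsilon)$, so the conclusion $\limsup_\varepsilon s(L_\varepsilon)\leq s(L_0)$ implicitly requires exactly the uniform-in-$\lambda$ resolvent bound over the half-plane that you prove explicitly via the $\mathcal{A}_1+\mathcal{B}_1$ splitting, the dissipativity estimate \eqref{Bdiss}, and the high-frequency decay of $\mathcal{A}_1R(\lambda,\mathcal{B}_1)$. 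In that sense your version supplies a step the paper leaves implicit; you instead avoid restricting to $\widehat{\mathcal{X}}$ by excising the ball $B(0,r)$ and quoting Proposition \ref{lem:simple} there, which is an equally valid way to handle the eigenvalue at $0$. Two small points to tighten: (i) your justification that $\beta\geq\nu$ is backwards --- localising the essential spectrum in $\{\Re\lambda\leq-\beta\}$ does not by itself force $\beta\geq\nu$; the correct reason is that in the enlargement argument of \cite{GMM} used in \cite{BCL} the gap $\nu$ in $\mathcal{X}$ is obtained as (at most) the minimum of $\beta$ and the gap in $\mathcal{H}$, so $\nu\leq\beta$ by construction, whence $\beta-\nu_*>0$; (ii) $\Delta$ is also unbounded in the direction $\Re\lambda\to+\infty$, so the dichotomy should be ``bounded part of $\Delta$ versus $|\lambda|\to\infty$'' rather than ``compact part versus $|\Im\lambda|\to\infty$'', though the large-$\Re\lambda$ regime is immediate from the same dissipativity estimate. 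Neither point affects the validity of the overall argument.
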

To prove this we will use the following result asserting that if an
operator has a spectral gap, and another operator is close to it in a
certain sense, then it must also have a spectral gap of a comparable
size:

\begin{lemme}
  \label{lem:spectral-gap-perturbation}
  Let $X$ be a Banach space and let $(L_{0},\D(L_{0}))$ be the
  generator of a $C_{0}$-semigroup. For any $\varepsilon \in (0,1)$
  let $(L_{\varepsilon},\D(L_{\varepsilon}))$ be a given closed
  unbounded operator with $\D(L_{0}) \subset \D(L_{\varepsilon})$ and
  \begin{equation}
    \label{eq:sg-perturbation-smallness}
    \lim_{\varepsilon \to
      0}\left\|(L_{\varepsilon}-L_{0})R(\lambda,L_{0})\right\|_{\mathscr{B}(X)}=0
    \qquad \forall \lambda \in \mathbb{C}
    \quad \text{ with $\Re$}\lambda > s(L_{0}).
  \end{equation}
  Then,
  $$\limsup_{\varepsilon \to 0}s(L_{\varepsilon}) \leq s(L_{0})$$
  with
  \begin{equation*}
    \lim_{\varepsilon \to 0} \left\|
      R(\lambda,L_{\varepsilon}) - R(\lambda,L_{0})
    \right\|_{\mathscr{B}(X)}=0
    \qquad
    \forall \lambda \in \mathbb{C}
    \quad \text{ with $\Re$}\lambda > s(L_{0}).
  \end{equation*}
\end{lemme}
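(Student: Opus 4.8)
The plan is to realise $R(\lambda,L_\varepsilon)$, for $\Re\lambda>s(L_0)$ and $\varepsilon$ small, as an explicit Neumann-series perturbation of $R(\lambda,L_0)$, and then to read off both conclusions from the smallness of the perturbation. First I would fix $\lambda\in\mathbb{C}$ with $\Re\lambda>s(L_0)$. Since $s(L_0)=\sup\{\Re z\,;\,z\in\mathfrak{S}(L_0)\}$ and $\mathfrak{S}(L_0)$ is closed, such a $\lambda$ lies in $\rho(L_0)$, so $R(\lambda,L_0)\in\mathscr{B}(X)$ is well defined and maps $X$ into $\D(L_0)\subset\D(L_\varepsilon)$. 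The algebraic backbone is the identity
\[
(\lambda\mathbb{I}-L_\varepsilon)R(\lambda,L_0)
=(\lambda\mathbb{I}-L_0)R(\lambda,L_0)-(L_\varepsilon-L_0)R(\lambda,L_0)
=\mathbb{I}-U_\varepsilon(\lambda),
\]
where $U_\varepsilon(\lambda):=(L_\varepsilon-L_0)R(\lambda,L_0)\in\mathscr{B}(X)$. By the hypothesis \eqref{eq:sg-perturbation-smallness}, $\|U_\varepsilon(\lambda)\|_{\mathscr{B}(X)}\to0$ as $\varepsilon\to0$, so there is $\varepsilon_0(\lambda)>0$ with $\|U_\varepsilon(\lambda)\|_{\mathscr{B}(X)}<1$ for $\varepsilon<\varepsilon_0(\lambda)$; then $\mathbb{I}-U_\varepsilon(\lambda)$ is boundedly invertible via the Neumann series $\sum_{n\geq0}U_\varepsilon(\lambda)^n$.

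Next I would verify that $S_\varepsilon(\lambda):=R(\lambda,L_0)(\mathbb{I}-U_\varepsilon(\lambda))^{-1}$ is the genuine resolvent. The identity above immediately yields $(\lambda\mathbb{I}-L_\varepsilon)S_\varepsilon(\lambda)=\mathbb{I}$, so $S_\varepsilon(\lambda)$ is a bounded right inverse of $\lambda\mathbb{I}-L_\varepsilon$. For the left inverse, note that for $u\in\D(L_0)$ one has, applying the identity to $g=(\lambda\mathbb{I}-L_0)u$, the relation $(\lambda\mathbb{I}-L_\varepsilon)u=(\mathbb{I}-U_\varepsilon(\lambda))(\lambda\mathbb{I}-L_0)u$, whence $S_\varepsilon(\lambda)(\lambda\mathbb{I}-L_\varepsilon)u=u$. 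Since in our setting $\D(L_\varepsilon)=\D(L_0)$, this holds on all of $\D(L_\varepsilon)$, so $\lambda\in\rho(L_\varepsilon)$ and $R(\lambda,L_\varepsilon)=R(\lambda,L_0)(\mathbb{I}-U_\varepsilon(\lambda))^{-1}$. The resolvent convergence is then immediate from
\[
R(\lambda,L_\varepsilon)-R(\lambda,L_0)
=R(\lambda,L_0)\bigl[(\mathbb{I}-U_\varepsilon(\lambda))^{-1}-\mathbb{I}\bigr],
\]
together with the Neumann-series bound $\|(\mathbb{I}-U_\varepsilon(\lambda))^{-1}-\mathbb{I}\|_{\mathscr{B}(X)}\leq\|U_\varepsilon(\lambda)\|_{\mathscr{B}(X)}/(1-\|U_\varepsilon(\lambda)\|_{\mathscr{B}(X)})$, which tends to $0$ as $\varepsilon\to0$; this proves the second assertion. (Here I note that the equality of domains is what guarantees injectivity: with only $\D(L_0)\subset\D(L_\varepsilon)$ the right inverse gives the direct decomposition $\D(L_\varepsilon)=\mathrm{Range}(S_\varepsilon(\lambda))\oplus\mathscr{N}(\lambda\mathbb{I}-L_\varepsilon)$, so injectivity is \emph{equivalent} to $\D(L_\varepsilon)=\D(L_0)$.)

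Finally, to obtain $\limsup_{\varepsilon\to0}s(L_\varepsilon)\leq s(L_0)$ I would fix $\delta>0$ and show that the whole half-plane $H_\delta=\{\Re\lambda\geq s(L_0)+\delta\}$ lies in $\rho(L_\varepsilon)$ for $\varepsilon$ small, which forces $s(L_\varepsilon)\leq s(L_0)+\delta$. I expect this to be the main obstacle, because the smallness in \eqref{eq:sg-perturbation-smallness} is \emph{pointwise} in $\lambda$, whereas this step needs it \emph{uniformly} on $H_\delta$; concretely, one has to rule out spectrum of $L_\varepsilon$ escaping to infinity along the imaginary direction within the band $s(L_0)+\delta\leq\Re\lambda\leq\omega$, $\omega$ being the growth bound of the semigroup generated by $L_0$. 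The way I would resolve it is to bound $\|U_\varepsilon(\lambda)\|_{\mathscr{B}(X)}\leq\kappa_\varepsilon\,\|R(\lambda,L_0)\|_{\mathscr{B}(X,\,\D(L_0))}$, where $\kappa_\varepsilon$ is the $L_0$-relative bound of $L_\varepsilon-L_0$ (which tends to $0$, the domain being endowed with the graph norm), and then to use a uniform bound for $R(\lambda,L_0)$ on $H_\delta$: far to the right this is the Hille--Yosida estimate, while the remaining bounded-real-part region is controlled by the resolvent estimates coming from the operator splitting of Section~\ref{sec:spectr-elast}. Once $\sup_{\lambda\in H_\delta}\|U_\varepsilon(\lambda)\|_{\mathscr{B}(X)}<1$, the Neumann construction of the first two paragraphs applies uniformly on $H_\delta$, giving $H_\delta\subset\rho(L_\varepsilon)$ and hence the claimed inequality.
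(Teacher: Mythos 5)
Your core argument is the same as the paper's: form $U_\varepsilon(\lambda)=(L_\varepsilon-L_0)R(\lambda,L_0)$, observe $(\lambda-L_\varepsilon)R(\lambda,L_0)=\mathbb{I}-U_\varepsilon(\lambda)$, invert by Neumann series for small $\varepsilon$, and read off both the inclusion $\lambda\in\rho(L_\varepsilon)$ and the resolvent convergence from $R(\lambda,L_\varepsilon)-R(\lambda,L_0)=R(\lambda,L_0)\bigl[(\mathbb{I}-U_\varepsilon(\lambda))^{-1}-\mathbb{I}\bigr]$. This is exactly the paper's proof (there $U_\varepsilon$ is called $Z_\varepsilon$ and $\mathbb{I}-Z_\varepsilon$ is $\mathscr{J}_\varepsilon$), so on the second assertion you and the paper coincide.

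Where you go beyond the paper is in flagging two points that its proof passes over, and both of your observations are correct. First, the identity $(\lambda-L_\varepsilon)R(\lambda,L_0)=\mathbb{I}-U_\varepsilon(\lambda)$ only exhibits $\lambda-L_\varepsilon$ as a bijection from $\D(L_0)$ onto $X$; under the stated hypothesis $\D(L_0)\subset\D(L_\varepsilon)$ one gets the decomposition $\D(L_\varepsilon)=\D(L_0)\oplus\mathscr{N}(\lambda-L_\varepsilon)$, so injectivity (hence $\lambda\in\rho(L_\varepsilon)$) indeed requires $\D(L_\varepsilon)=\D(L_0)$, which the paper simply asserts implicitly; it does hold in the application since $\D(\mathscr{L}_\alpha)=\mathcal{Y}$ for all $\alpha$. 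Second, you are right that the first conclusion does not follow from the pointwise hypothesis \eqref{eq:sg-perturbation-smallness} alone: the paper's step ``therefore $\Re\lambda\geq s(L_\varepsilon)$'' deduces a statement about the whole spectrum from invertibility at a single $\lambda$, with an $\varepsilon_0$ depending on $\lambda$. To get $\limsup_{\varepsilon\to0}s(L_\varepsilon)\leq s(L_0)$ one needs $\sup_{\Re\lambda\geq s(L_0)+\delta}\|U_\varepsilon(\lambda)\|_{\mathscr{B}(X)}<1$ for small $\varepsilon$, and your remedy --- factor $\|U_\varepsilon(\lambda)\|\leq\kappa_\varepsilon\|R(\lambda,L_0)\|_{\mathscr{B}(X,\D(L_0))}$ with $\kappa_\varepsilon\to0$ the relative bound (this is what Proposition \ref{prop:LaL1} supplies, with $\D(L_0)$-graph norm equivalent to $\|\cdot\|_{\mathcal{Y}}$), then bound the graph-norm resolvent uniformly on the half-plane via Hille--Yosida far to the right and the dissipative splitting elsewhere --- is the natural way to close it. Be aware, though, that this imports hypotheses (uniform relative-bound control and a uniform graph-norm resolvent estimate on half-planes) that are not part of the lemma as stated; as written, the lemma's first conclusion is not a consequence of \eqref{eq:sg-perturbation-smallness} alone, and your proof of that part is a proof of a strengthened lemma rather than of this one.
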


\begin{proof}
  Let $\lambda \in \mathbb{C}$ be given with
  $\Re\lambda > s(L_{0})$ and let $\varepsilon_{0} > 0$ be such that
$$\left\|(L_{0}-L_{\varepsilon})R(\lambda,L_{0})\right\|_{\mathscr{B}(X)} < 1\qquad \qquad \forall \varepsilon \in (0,\varepsilon_{0}).$$ 
Setting
$$\mathscr{J}_{\varepsilon}=I-(L_{\varepsilon}-L_{0})R(\lambda,L_{0})=:I- {Z}_{\varepsilon}$$ one gets that $\mathscr{J}_{\varepsilon}$ is invertible for any $\varepsilon \in (0,\varepsilon_{0})$ with $\mathscr{J}_{\varepsilon}^{-1}=\sum_{n=0}^{\infty} {Z}_{\varepsilon}^{n}.$ Since moreover
$$\mathscr{J}_{\varepsilon}=\left(\lambda-L_{\varepsilon}\right)R(\lambda,L_{0})$$
one gets that $\lambda-L_{\varepsilon}$ is invertible for any $\varepsilon \in (0,{\varepsilon}_{0})$ with 
$$R(\lambda,L_{\varepsilon})=R(\lambda,L_{0})\mathscr{J}_{\varepsilon}^{-1} \qquad \forall \varepsilon \in (0,\varepsilon_{0}).$$ Therefore, $\Re\lambda \geq s(L_{\varepsilon})$ for any $\varepsilon \in (0,\varepsilon_{0})$ which proves the first part of the result. For the second part, one has simply,  for a given $\lambda \in \mathbb{C}$ with $\Re\lambda > s(L_{0})$,
$$\left\|R(\lambda,L_{\varepsilon})-R(\lambda,L_{0})\right\|_{\mathscr{B}(X)}=\left\|R(\lambda,L_{0})\,\left(\mathscr{J}_{\varepsilon}^{-1}-I\right)\right\|_{\mathscr{B}(X)}$$
and it suffices to prove that 
$$\lim_{\varepsilon \to 0} \left\|\mathscr{J}_{\varepsilon}^{-1}-I\right\|_{\mathscr{B}(X)}=0.$$
Since $\mathscr{J}_{\varepsilon}^{-1}=\sum_{n=0}^{\infty}Z_{\varepsilon}^{n}$, we get
$$\left\|\mathscr{J}_{\varepsilon}^{-1}-I\right\|_{\mathscr{B}(X)}\leq \sum_{n=1}^{\infty}\left\|Z_{\varepsilon}^{n}\right\|_{\mathscr{B}(X)}$$
and clearly, since $\lim_{\varepsilon \to 0}\left\|Z_{\varepsilon}\right\|_{\mathscr{B}(X)}=0$ we get the conclusion.
\end{proof}

\begin{nb}
  We notice that the above result can also be seen as a way of stating
  general abstract results for relatively bounded operators (see
  \cite[Theorem 3.17, p. 214]{kato}).\end{nb}

We are now ready to prove Theorem \ref{thm:spectral-gap-alpha}:

\begin{proof}[Proof of Theorem \ref{thm:spectral-gap-alpha}]
  We will apply Lemma \ref{lem:spectral-gap-perturbation} to the
  restriction of $\mathscr{L}_{\alpha}$ to $\widehat{\mathcal{X}}$ for
  $\alpha$ close to $1$. (We recall the reader that the spaces
  $\widehat{\mathcal{X}}$ and $\widehat{\mathcal{Y}}$ were defined in
  section \ref{sec:spectr-elast}.) Notice that, since
 $$\int_{\R^{3}}\mathscr{L}_{\alpha}f(v)\d v=0 \qquad \forall f \in \D(\mathscr{L}_{\alpha}) \qquad \alpha \in (0,1]$$
 one can define the restriction
 $\widehat{\mathscr{L}}_{\alpha} \colon
 \D(\widehat{\mathscr{L}}_{\alpha}) \subset \widehat{\mathcal{X}} \to
 \widehat{\mathcal{X}}$
 by
 $\D(\widehat{\mathscr{L}}_{\alpha})=\D(\mathscr{L}_{\alpha}) \cap
 \widehat{\mathcal{X}}=\widehat{\mathcal{Y}}$
 and $\widehat{\mathscr{L}}_{\alpha}f=\mathscr{L}_{\alpha}f$ for any
 $f \in \widehat{\mathcal{Y}}$ for any $ \alpha \in (0,1].$ According
 to Theorem \ref{spect}, $s(\widehat{\mathscr{L}}_{1})=-\nu < 0$.

Estimate
  \eqref{eq:sg-perturbation-smallness} in Lemma
  \ref{lem:spectral-gap-perturbation} for $\widehat{\mathscr{L}}_1$ and
  $\widehat{\mathscr{L}}_\alpha$ is exactly Proposition \ref{prop:LaL1} since
  \begin{equation*}
    \|\widehat{\mathscr{L}}_{\alpha}(h)
    - \widehat{\mathscr{L}}_{1}(h)\|_{\widehat{\mathcal{X}}}
    = \|\mathscr{L}_{\alpha}(h) - \mathscr{L}_1(h)\|_{ {\mathcal{X}}}
    \qquad \forall h \in \widehat{\mathcal{Y}}.
  \end{equation*}
  Since
  $R(\lambda, \widehat{\mathscr{L}}_1) \colon \widehat{\mathcal{X}}
  \to \widehat{\mathcal{Y}}$,
  the hypotheses of Lemma \ref{lem:spectral-gap-perturbation} are
  satisfied and therefore
  $s(\widehat{\mathscr{L}}_{\alpha}) \leq \nu_* < \nu =
  s(\widehat{\mathscr{L}}_{1})$
  for any $\alpha$ close enough to $1$. Now,
  since $\widehat{\mathcal{X}}=\mathrm{Range}(\mathbb{I-P}_{\alpha})$
  for any $\alpha \in (\alpha_{1},1]$, one has
  $\widehat{\mathscr{L}}_{\alpha}=\mathscr{L}_{\alpha}\vert_{(\mathbb{I-P}_{\alpha})\mathcal{X}}$
  for any $\alpha \in (\alpha_{1},1].$ This finishes the proof.
\end{proof}

\subsection{Decay of the associated semigroup} 

Now, one should translate the above spectral gap of the operator
$\mathscr{L}_{\alpha}$ into a decay of the associated semigroup. To do
so, we use a stable splitting of the generator $\mathscr{L}_{\alpha}$
into a dissipative part and a regularising part. This strategy is
inspired in the recent results \cite{GMM,MS}, but we give a proof
adapted to our situation, exploiting a well known stability property
of the essential spectrum under weakly compact perturbations. Our
splitting is in the spirit of the one described in Section
\ref{sec:spectr-elast}. Namely, set
\begin{equation*}
  \mathcal{T}_{\alpha}f = \Q_{\alpha}(f,\fe) + \Q_{\alpha}(\fe,f),
  \qquad f \in \mathcal{Y}
\end{equation*}

so that $\mathscr{L}_{\alpha}=\mathcal{T}_{\alpha}+\L.$ The positive
part of this operator is
$$\mathcal{T}^{+}_{\alpha}f=\Q^{+}_{\alpha}(f,\fe)+\Q^{+}_{\alpha}(\fe,f)$$
and $\mathcal{T}_\alpha$ is written as
\begin{equation*}
  \mathcal{T}_{\alpha}f(v) =
  \mathcal{T}^{+}_{\alpha}f(v)
  -\fe(v)\int_{\R^{3}}f(w)\,|v-w|\d w-\sigma_{\alpha}(v)f(v),
  \qquad v \in \R^{3},\:f \in \mathcal{Y},
\end{equation*}
where 
$$\sigma_{\alpha}(v)=\int_{\R^{3}}\fe(w)\,|v-w|\d w \geq \underline{\sigma_{\alpha}} \left(1+|v|\right) \qquad \forall v \in \R^{3}.$$
Inspired by the splitting in Section \ref{sec:spectr-elast}, let us
pick $R >0$ large enough so that \eqref{Bdiss} holds true and define,
for any $\alpha$,
\begin{equation}
  \label{eq:Balpha}
  \mathcal{B}_{\alpha}f(v) =
  \mathcal{T}^{+}_{\alpha}(\chi_{B_R^{c}}f)(v)
  + \L^{+}(\chi_{B_{R}^{c}}f)(v)
  -\left(\mathbf{\Sigma}(v) + \sigma_{\alpha}(v)\right)f(v)
\end{equation}
for $f \in \mathcal{Y}$ and $v \in \R^{3}$. Let us first see that
$\mathcal{B}_\alpha$ thus defined is dissipative:
\begin{lemme}\label{lem:Bdiss}
  Let $R >0$ and $\beta >0$ be given as in \eqref{Bdiss}. For any
  $0 < \beta_{\star} < \beta$, there exists
  $\alpha^{\dagger}=\alpha^{\dagger}(\beta_{\star}) \in
  (\alpha_{0},1)$ such that
  \begin{equation}
    \label{Bdissal}
    \IR \mathrm{sign}f(v) \mathcal{B}_{\alpha}f(v)m^{-1}(v)\d v
    \leq -\beta_{\star} \|f\|_{\mathcal{Y}}
    \qquad \forall f \in \mathcal{Y},
    \qquad \forall \alpha \in (\alpha^{\dagger},1).
  \end{equation}
\end{lemme}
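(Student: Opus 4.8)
The plan is to treat $\mathcal{B}_{\alpha}$ as a perturbation of the elastic operator $\mathcal{B}_{1}$, for which the dissipativity estimate \eqref{Bdiss} is available with constant $\beta$. The first observation is that, by the computation carried out in Section \ref{sec:spectr-elast}, the operator $\mathcal{B}_{1}=\mathscr{L}_{1}-\mathcal{A}_{1}$ has exactly the form \eqref{eq:Balpha} with $\alpha=1$ (that is, with $\fe$ replaced by $\M$ and $\sigma_{\alpha}$ by $\sigma_{1}$), so that \eqref{Bdiss} reads
$$\IR \mathrm{sign}f(v)\,\mathcal{B}_{1} f(v)\,m^{-1}(v)\d v \leq -\beta\|f\|_{\mathcal{Y}} \qquad \forall f \in \mathcal{Y}.$$
The heart of the matter is then to prove that $\mathcal{B}_{\alpha}-\mathcal{B}_{1}$ is small as an operator from $\mathcal{Y}$ to $\mathcal{X}$, uniformly in $f$, when $\alpha$ is close to $1$; this is entirely parallel to the proof of Proposition \ref{prop:LaL1}.

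Since the $\L^{+}$ contribution and the multiplication by $\mathbf{\Sigma}$ do not depend on $\alpha$, they cancel in the difference, leaving
$$(\mathcal{B}_{\alpha} - \mathcal{B}_{1})f = (\mathcal{T}_{\alpha}^{+} - \mathcal{T}_{1}^{+})(\chi_{B_R^{c}}f) - (\sigma_{\alpha} - \sigma_{1})f.$$
For the first term I would use bilinearity, adding and subtracting $\M$ in the slot occupied by $\fe$,
$$\mathcal{T}_{\alpha}^{+} g - \mathcal{T}_{1}^{+} g = \Q_{\alpha}^{+}(g,\fe-\M) + \Q_{\alpha}^{+}(\fe-\M,g) + [\Q_{\alpha}^{+}(g,\M)-\Q_{1}^{+}(g,\M)] + [\Q_{\alpha}^{+}(\M,g)-\Q_{1}^{+}(\M,g)].$$
The last two brackets are controlled by Proposition \ref{elasticMM}, with $\M \in W^{1,1}_1(m^{-1})$ playing the role of the regular factor, giving a bound $C\,p(\alpha-1)\|g\|_{\mathcal{Y}}$. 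The first two are estimated by the boundedness of the gain operator $\Q_{\alpha}^{+}\colon\mathcal{Y}\times\mathcal{Y}\to\mathcal{X}$, which follows from Proposition \ref{propoAlo} together with the elementary bound $\|\Q_{\alpha}^{-}(h,g)\|_{\mathcal{X}}\leq C\|h\|_{\mathcal{Y}}\|g\|_{\mathcal{Y}}$ for the loss part, combined with Theorem \ref{limit1} to give $C\|\fe-\M\|_{\mathcal{Y}}\|g\|_{\mathcal{Y}}\leq C\eta_1(\alpha)\|g\|_{\mathcal{Y}}$; taking $g=\chi_{B_R^{c}}f$ and using $\|\chi_{B_R^{c}}f\|_{\mathcal{Y}}\leq\|f\|_{\mathcal{Y}}$ finishes this term. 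For the second term I would bound $|v-w|\leq(1+|v|)(1+|w|)$ to obtain
$$|\sigma_{\alpha}(v)-\sigma_{1}(v)| \leq \IR |\fe(w)-\M(w)|\,|v-w|\d w \leq C(1+|v|)\,\|\fe-\M\|_{\mathcal{Y}},$$
so that
$$\|(\sigma_{\alpha}-\sigma_{1})f\|_{\mathcal{X}} \leq C\|\fe-\M\|_{\mathcal{Y}}\IR(1+|v|)\,|f(v)|\,m^{-1}(v)\d v \leq C\eta_1(\alpha)\|f\|_{\mathcal{Y}}.$$
Collecting these estimates yields $\|(\mathcal{B}_{\alpha}-\mathcal{B}_{1})f\|_{\mathcal{X}}\leq \varpi'(\alpha)\|f\|_{\mathcal{Y}}$ with $\varpi'(\alpha):=C(\eta_1(\alpha)+p(\alpha-1))\to 0$ as $\alpha\to1$.

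To conclude I would combine the two ingredients: since $|\mathrm{sign}f|\leq1$,
$$\IR\mathrm{sign}f\,\mathcal{B}_{\alpha} f\,m^{-1}\d v \leq \IR\mathrm{sign}f\,\mathcal{B}_{1} f\,m^{-1}\d v + \|(\mathcal{B}_{\alpha}-\mathcal{B}_{1})f\|_{\mathcal{X}} \leq (-\beta+\varpi'(\alpha))\|f\|_{\mathcal{Y}}.$$
Given $0<\beta_{\star}<\beta$, since $\varpi'(\alpha)\to0$ there is an explicit $\alpha^{\dagger}(\beta_{\star})\in(\alpha_{0},1)$ with $\varpi'(\alpha)\leq\beta-\beta_{\star}$ for $\alpha\in(\alpha^{\dagger},1)$, which gives \eqref{Bdissal}. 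I expect the main obstacle to be the smallness of $\mathcal{B}_{\alpha}-\mathcal{B}_{1}$, and within it the control of the gain part $\Q_{\alpha}^{+}$ (only the full operator $\Q_{\alpha}$ is bounded in Proposition \ref{propoAlo}, so the loss term must be handled separately) together with the simultaneous handling of the two distinct sources of perturbation, namely the change of restitution coefficient $\alpha\to1$ and the replacement of $\fe$ by $\M$.
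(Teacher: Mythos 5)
Your proof is correct and follows essentially the same route as the paper's: the same decomposition $(\mathcal{B}_{\alpha}-\mathcal{B}_{1})f=(\mathcal{T}^{+}_{\alpha}-\mathcal{T}^{+}_{1})(\chi_{B_R^{c}}f)-(\sigma_{\alpha}-\sigma_{1})f$, the same add-and-subtract of $\M$ controlled via Proposition \ref{propoAlo}, Proposition \ref{elasticMM} and Theorem \ref{limit1}, and the same final step bounding the perturbation of the dissipativity functional by $\|(\mathcal{B}_{\alpha}-\mathcal{B}_{1})f\|_{\mathcal{X}}$. Your extra care about the gain part $\Q_{\alpha}^{+}$ versus the full operator is a reasonable refinement of a point the paper passes over, but it does not change the argument.
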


\begin{proof}
  The proof is a direct consequence of \eqref{Bdiss} together with the
  fact that $\mathcal{T}^{+}_{\alpha}$ converges strongly to
  $\mathcal{T}^{+}_{1}$. More precisely, let us fix
  $f \in \mathcal{Y}$ and compute first
  $\|\mathcal{T}^{+}_{\alpha}f_{R}-\mathcal{T}^{+}_{1}f_{R}\|_{\mathcal{X}}$. One
  checks easily that
  \begin{multline*}
    \|\mathcal{T}^{+}_{\alpha}f_{R}-\mathcal{T}^{+}_{1}f_{R}\|_{\mathcal{X}}
    \leq \|\Q_{\alpha}^{+}(\fe-\M,f_{R})+\Q^{+}_{\alpha}(f_{R},\fe-\M)\|_{\mathcal{X}}
    \\
    + \|\Q_{\alpha}^{+}(\M,f_{R})-\Q^{+}_{1}(\M,f_{R})\|_{\mathcal{X}}
    +\|\Q_{\alpha}^{+}(f_{R},\M)-\Q_{1}^{+}(f_{R},\M)\|_{\mathcal{X}}
    \\
    \leq
    C\,\|\fe-\M\|_{\mathcal{Y}}\,\|f_{R}\|_{\mathcal{Y}}
    + 2 p(1-\alpha)\|\M\|_{W^{1,1}_{1}(m^{-1})}\,\|f_{R}\|_{\mathcal{Y}}
  \end{multline*}
  where we used both Proposition \ref{propoAlo} and
  \ref{elasticMM}. Consequently, there exists a nonnegative function
  $\delta_{1}(\alpha)$ with $\lim_{\alpha \to 1}\delta_{1}(\alpha)=0$
  such that
  \begin{equation}
    \label{L+fr}
    \|\mathcal{T}^{+}_{\alpha}f_{R}-\mathcal{T}^{+}_{1}f_{R}\|_{\mathcal{X}} \leq \delta_{1}(\alpha)\|f\|_{\mathcal{Y}} \qquad \forall \alpha \in (\alpha_{0},1) \qquad \forall R > 0.\end{equation}
  Set now 
  $$\mathscr{F}_{\alpha}(f)=
  \IR \mathrm{sign}f(v) \mathcal{B}_{\alpha}f(v)m^{-1}(v)\d v \qquad \forall \alpha \in (\alpha_{0},1].$$
  Using the fact that 
  $$\mathcal{B}_{\alpha}f(v)-\mathcal{B}_{1}f(v)=\mathcal{T}^{+}_{\alpha}f_{R}(v)-\mathcal{T}^{+}_{1}f_{R}(v)-\left(\sigma_{\alpha}(v)-\sigma_{1}(v)\right)f(v)$$
  we get readily that
  \begin{multline*}
    \mathscr{F}_{\alpha}(f) \leq \mathscr{F}_{1}(f)+ \|\mathcal{T}^{+}_{\alpha}f_{R}-\mathcal{T}^{+}_{1}f_{R}\|_{\mathcal{X}} \\
    -\int_{\R^{3}}\,\left(\sigma_{\alpha}(v)-\sigma_{1}(v)\right)\,|f(v)|\,m^{-1}(v)\d v\\
    \leq \mathscr{I}_{1}(f)+\delta_{1}(\alpha)\,\|f\|_{\mathcal{Y}} 
    +\int_{\R^{3}}\left|\sigma_{\alpha}(v)-\sigma_{1}(v)\right|\,|f(v)|\,m^{-1}(v)\d v
  \end{multline*}
  where we used \eqref{L+fr}. Finally, since
  $|v-w| \leq \langle v\rangle \,\langle w\rangle$
  $\forall v,w \in \R^{3}$, we have
  $$|\sigma_{\alpha}(v)-\sigma_{1}(v)| \leq \int_{\R^{3}}|v-w|\,\left|\fe(w)-\M(w)\right|\d w \leq \langle v\rangle \|\fe-\M\|_{L^{1}_{1}(\R^{3})} \leq \langle v\rangle \|\fe-\M\|_{\mathcal{Y}}$$
  and we deduce from Theorem \ref{limit1} that 
  $$\int_{\R^{3}}\left|\sigma_{\alpha}(v)-\sigma_{1}(v)\right|\,|f(v)|\,m^{-1}(v)\d v \leq \eta_{1}(\alpha)\,\int_{\R^{3}}\langle v\rangle |f(v)|m^{-1}(v)\d v=\eta_{1}(\alpha)\,\|f\|_{\mathcal{Y}}$$
  with $\lim_{\alpha \to 1}\eta_{1}(\alpha)=0.$ To summarize, there exists a function $\delta(\cdot)$ with $\lim_{\alpha\to 1}\delta(\alpha)=0$ such that
  $$\mathscr{F}_{\alpha}(f) \leq \mathscr{F}_{1}(f)+\delta(\alpha)\,\|f\|_{\mathcal{Y}} \qquad \forall f \in \mathcal{Y}$$
  which, from  \eqref{Bdiss}, becomes
  $$\mathscr{I}_{\alpha}(f) \leq \left(\delta(\alpha)-\beta\right)\,\|f\|_{\mathcal{Y}} \qquad \forall f \in \mathcal{Y}.$$
  This gives the first part of result since $\lim_{\alpha\to
    1}\delta(\alpha)=0.$
\end{proof}

\medskip

We set now
$\mathcal{A}_{\alpha}=\mathscr{L}_{\alpha}-\mathcal{B}_{\alpha}$,
$\alpha \in (\alpha^{\dagger},1)$, or in other words
\begin{equation*}
  \mathcal{A}_{\alpha}f(v)=\mathcal{T}^{+}_{\alpha}(\chi_{B_{R}}f)(v)+\L^{+}(\chi_{B_{R}}f)(v)
  - \fe(v)\int_{\R^{3}}f(w)\,|v-w|\d w
\end{equation*}
for $v \in \R^{3}$ and $f \in \mathcal{X}$. Then we have then the following:

\begin{propo}\label{prop:ABalpha} For any $a_{\star}  \in (0, a)$ and for any $\alpha \in (\alpha^{\dagger},1)$, one has 
\begin{enumerate}[i)]
\item $\mathcal{A}_{\alpha} \in \mathscr{B}(X)$;
\item $\mathcal{B}_{\alpha}\::\: \D(\mathcal{B}_{\alpha}) \subset \mathcal{X} \to \mathcal{X}$ with domain $\D(\mathcal{B}_{\alpha})=\mathcal{Y}$ is the generator of a $C_0$-semigroup $\left(\mathcal{U}_{\alpha}(t)\right)_{t \geq 0}$ of $\mathcal{X}$ with
\begin{equation}\label{eq:Ualphat}
\left\|\,\mathcal{U}_{\alpha}(t)f\right\|_{\mathcal{X}} \leq \exp(-\beta_{\star}t)\|f\|_{\mathcal{X}} \qquad \forall t \geq 0,\:\:\forall f \in \mathcal{X}.\end{equation}
\end{enumerate}
\end{propo}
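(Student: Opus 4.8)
The plan is to prove the two assertions by separate arguments: boundedness of $\mathcal{A}_\alpha$ follows from the bilinear estimates recalled in Section 2 once the cutoff is used, whereas the generation property of $\mathcal{B}_\alpha$ rests on its dissipativity (Lemma \ref{lem:Bdiss}) combined with the positivity of its gain part.

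For (i), I would bound separately the three pieces of $\mathcal{A}_\alpha$. The role of the cutoff $\chi_{B_R}$ is decisive: since $\langle v\rangle\leq\langle R\rangle$ on $B_R$, one has $\|\chi_{B_R}f\|_{\mathcal{Y}}\leq\langle R\rangle\,\|f\|_{\mathcal{X}}$, so the truncation converts an $\mathcal{X}$--bound into a $\mathcal{Y}$--bound. Writing $\mathcal{T}^+_\alpha g=\Q^+_\alpha(g,\fe)+\Q^+_\alpha(\fe,g)$ and $\L^+g=\Q^+_1(g,\M)$, and observing that the bound of Proposition \ref{propoAlo} carries over to the gain parts $\Q^+_\alpha$ (the loss part $\Q^-_\alpha(f,g)=f\,\mathbf{\Sigma}(g)$ being itself bounded $\mathcal{Y}\times\mathcal{Y}\to\mathcal{X}$), the first two terms are controlled by $C\langle R\rangle\bigl(\|\fe\|_{\mathcal{Y}}+\|\M\|_{\mathcal{Y}}\bigr)\|f\|_{\mathcal{X}}$, using $\fe,\M\in\mathcal{Y}$ (Theorem \ref{tailT}). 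For the last term I use $\mathbf{\Sigma}(f)(v)\leq\langle v\rangle\,\|f\|_{L^1_1}\leq C\langle v\rangle\,\|f\|_{\mathcal{X}}$ (since $\langle v\rangle\leq C\,m^{-1}(v)$), whence $\|\fe\,\mathbf{\Sigma}(f)\|_{\mathcal{X}}\leq C\|\fe\|_{\mathcal{Y}}\|f\|_{\mathcal{X}}$. Adding the three estimates gives $\mathcal{A}_\alpha\in\mathscr{B}(\mathcal{X})$.

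For (ii) I split $\mathcal{B}_\alpha=\mathcal{D}_\alpha+\mathcal{G}_\alpha$, where $\mathcal{D}_\alpha f=-(\mathbf{\Sigma}+\sigma_\alpha)f$ is the negative multiplication operator and $\mathcal{G}_\alpha f=\mathcal{T}^+_\alpha(\chi_{B_R^{c}}f)+\L^+(\chi_{B_R^{c}}f)\geq0$ is the positivity-preserving gain part. Because $c\langle v\rangle\leq(\mathbf{\Sigma}+\sigma_\alpha)(v)\leq C\langle v\rangle$, the operator $\mathcal{D}_\alpha$ with maximal domain $\mathcal{Y}$ generates the explicit contraction multiplication semigroup $e^{-t(\mathbf{\Sigma}+\sigma_\alpha)}$, and for $\lambda>0$ its resolvent $(\lambda-\mathcal{D}_\alpha)^{-1}$ is the positivity-preserving multiplication by $(\lambda+\mathbf{\Sigma}+\sigma_\alpha)^{-1}$, mapping $\mathcal{X}$ into $\mathcal{Y}$. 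Moreover Lemma \ref{lem:Bdiss}, read in $L^1(m^{-1})$, says precisely that $\mathcal{B}_\alpha+\beta_\star$ is dissipative; since $\mathcal{Y}$ is dense in $\mathcal{X}$, by the Lumer--Phillips theorem it only remains to establish the range condition: $\lambda-\mathcal{B}_\alpha$ surjective for some $\lambda>0$.

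This range condition is the main obstacle. The difficulty is that $\mathcal{G}_\alpha$ is \emph{not} bounded on $\mathcal{X}$ --- it maps only $\mathcal{Y}\to\mathcal{X}$ --- and its relative bound with respect to $\mathcal{D}_\alpha$ is not smaller than $1$ (the ratio $\langle v\rangle/(\mathbf{\Sigma}+\sigma_\alpha)(v)$ stays bounded away from $0$ as $|v|\to\infty$), so neither a Neumann series for $\mathcal{G}_\alpha(\lambda-\mathcal{D}_\alpha)^{-1}$ nor the Miyadera--Voigt theorem applies. I would bypass this by a monotone iteration exploiting positivity: for $g\geq0$ in $\mathcal{X}$ set $h_0=(\lambda-\mathcal{D}_\alpha)^{-1}g$ and $h_{n+1}=(\lambda-\mathcal{D}_\alpha)^{-1}(g+\mathcal{G}_\alpha h_n)$, all of which lie in $\mathcal{Y}$. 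Positivity of $\mathcal{G}_\alpha$ and of the resolvent makes $(h_n)$ nondecreasing, while integrating the identity $\lambda h_n+(\mathbf{\Sigma}+\sigma_\alpha)h_n=g+\mathcal{G}_\alpha h_{n-1}$ against $m^{-1}$ and inserting the dissipativity estimate of Lemma \ref{lem:Bdiss} (applied to $h_{n-1}\geq0$) yields the uniform bound $\lambda\|h_n\|_{\mathcal{X}}+\beta_\star\|h_{n-1}\|_{\mathcal{Y}}\leq\|g\|_{\mathcal{X}}$. Hence $(h_n)$ is bounded in $\mathcal{Y}$, and monotone convergence produces $h\in\mathcal{Y}$ solving $(\lambda-\mathcal{B}_\alpha)h=g$; the general case follows by splitting $g$ into its positive and negative parts. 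Lumer--Phillips then shows that $\mathcal{B}_\alpha+\beta_\star$ generates a contraction $C_0$-semigroup, equivalently that $\mathcal{B}_\alpha$ generates $(\mathcal{U}_\alpha(t))_{t\geq0}$ with $\|\mathcal{U}_\alpha(t)\|_{\mathscr{B}(\mathcal{X})}\leq e^{-\beta_\star t}$, which is \eqref{eq:Ualphat}. The hypothesis $a_\star\in(0,a)$ serves only to guarantee that the weighted bilinear bounds and the comparison $\langle v\rangle\leq C\,m^{-1}$ used above hold for the weight in force.
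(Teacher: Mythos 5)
Your proof is correct, and while part (i) follows the paper's argument almost verbatim (truncation converts the $\mathcal{X}$-norm into a $\mathcal{Y}$-norm on $B_R$, then Proposition \ref{propoAlo} and the Maxwellian bounds on $\fe$ do the rest), part (ii) takes a genuinely different route. The paper does not verify the Lumer--Phillips range condition for $\mathcal{B}_\alpha$ at all: it invokes Theorem \ref{theo:generation} (proved in Appendix B via weak compactness of the gain operators and a Desch/Mokhtar-Kharroubi generation theorem) to get that $\mathscr{L}_\alpha$ generates a $C_0$-semigroup, transfers generation to $\mathcal{B}_\alpha=\mathscr{L}_\alpha-\mathcal{A}_\alpha$ by the bounded perturbation theorem using part (i), and only then applies Lumer--Phillips together with \eqref{Bdissal} to obtain the contraction estimate. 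You instead prove surjectivity of $\lambda-\mathcal{B}_\alpha$ directly, by a monotone iteration whose uniform $\mathcal{Y}$-bound is extracted from the very same dissipativity estimate; this is self-contained (it does not lean on the appendix), and since $\mathcal{A}_\alpha$ is bounded it would even let one recover Theorem \ref{theo:generation} as a corollary rather than a prerequisite. The iteration itself is sound: $h_0=(\lambda-\mathcal{D}_\alpha)^{-1}g\in\mathcal{Y}$ because $(\mathbf{\Sigma}+\sigma_\alpha)(v)\geq c\langle v\rangle$, monotonicity of $(h_n)$ follows from positivity of the gain part and of the resolvent, and the cancellation of $\int(\mathbf{\Sigma}+\sigma_\alpha)h_n\,m^{-1}$ against $\int(\mathbf{\Sigma}+\sigma_\alpha)h_{n-1}\,m^{-1}$ uses $h_n\geq h_{n-1}$, exactly as you indicate.

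Two minor remarks. First, \eqref{Bdissal} is stated with a factor $\mathrm{sign}\,f$ in front of the \emph{whole} of $\mathcal{B}_\alpha f$; for nonnegative $f$ your argument needs $\int\mathcal{G}_\alpha f\,m^{-1}\d v-\int(\mathbf{\Sigma}+\sigma_\alpha)f\,m^{-1}\d v\leq-\beta_\star\|f\|_{\mathcal{Y}}$ with no sign in front of the gain term, which is what the proof of Lemma \ref{lem:Bdiss} actually establishes (and is immediate for $f\geq 0$ once one fixes the convention $\mathrm{sign}\,f\equiv 1$ on $\{f=0\}$); it is worth saying so explicitly. Second, your aside that Miyadera--Voigt is unavailable is too pessimistic: the same estimate gives $\int_0^\infty\|\mathcal{G}_\alpha e^{s\mathcal{D}_\alpha}f\|_{\mathcal{X}}\d s\leq(1-\beta_\star/C)\|f\|_{\mathcal{X}}$ with $C$ an upper bound for $(\mathbf{\Sigma}+\sigma_\alpha)/\langle v\rangle$, so that route would also close; this does not affect the validity of the proof you gave. (As for $a_\star$: it plays no role in the statement or in either proof, and is presumably a misprint for $\beta_\star\in(0,\beta)$.)
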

\begin{proof} It is clear from Proposition \ref{propoAlo} that $\mathcal{A}_{\alpha}$ is a bounded operator in $\mathcal{X}$ since 
\begin{multline*}
\|\mathcal{A}_{\alpha}f\|_{\mathcal{X}} \leq C\|\fe\|_{\mathcal{Y}}\|\chi_{B_{R}}f\|_{\mathcal{Y}} + \|\fe\|_{\mathcal{Y}}\|f\|_{L^{1}_{1}(\R^{3})} \\
\leq C_{R}\|\fe\|_{\mathcal{Y}}\,\|f\|_{\mathcal{X}}\,+\|\fe\|_{\mathcal{Y}}\|f\|_{\mathcal{X}} \qquad \forall f \in \mathcal{X}\end{multline*}
for some positive constant $C_{R}$ depending on $R$.

Since $\mathscr{L}_{\alpha}$ (with domain $\mathcal{Y}$) is the
generator of a $C_{0}$-semigroup in $\mathcal{X}$ according to Theorem
\ref{theo:generation} and $\mathcal{A}_{\alpha}$ is bounded, it
follows from the classical bounded perturbation theorem that
$\mathcal{B}_{\alpha}$ (with domain $\mathcal{Y}$) is also the
generator of a $C_{0}$-semigroup
$\left(\mathcal{U}_{\alpha}(t)\right)_{t \geq 0}$ in
$\mathcal{X}$. Since $\mathcal{B}_{\alpha}+\beta_{\star}$ is
\emph{dissipative} according to \eqref{Bdissal}, \eqref{eq:Ualphat}
holds according to the Lumer-Phillips theorem.
\end{proof}

Actually, it is easy to check that $\mathcal{A}_{\alpha}$ has better
regularising properties:
\begin{lemme} For any $\alpha \in (\alpha_{0},1)$, $\mathcal{A}_{\alpha} \in \mathscr{B}(\mathcal{X},\mathcal{Y}).$ Moreover, there exists some Maxwellian distribution $M$ such that, 
for any $\alpha \in (\alpha_{0},1)$,
$$\mathcal{A}_{\alpha} \in \mathscr{B}(\mathcal{X},\mathcal{H}) \qquad \text{ where } \mathcal{H}=L^{2}(M^{-1/2}).$$
In particular, $\mathcal{A}_{\alpha}$ is a weakly compact operator in
$\mathcal{X}$.
\end{lemme}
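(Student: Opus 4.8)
The plan is to treat separately the four pieces making up $\mathcal{A}_{\alpha}$: the two inelastic gain terms $\Q^{+}_{\alpha}(\chi_{B_{R}}f,\fe)$ and $\Q^{+}_{\alpha}(\fe,\chi_{B_{R}}f)$, the elastic gain term $\L^{+}(\chi_{B_{R}}f)=\Q^{+}_{1}(\chi_{B_{R}}f,\M)$, and the convolution-loss term $\fe(v)\int f(w)|v-w|\d w$. The single structural fact driving everything is the pointwise Maxwellian bound $\fe\leq\overline{\M}$, valid uniformly in $\alpha\in(\alpha_{0},1)$, which reduces all estimates to ones involving the fixed Gaussian $\overline{\M}$; together with the truncation $\chi_{B_{R}}$, which guarantees that $\chi_{B_{R}}f$ is compactly supported with $\|\chi_{B_{R}}f\|_{L^{1}_{s}}\leq C_{R,s}\|f\|_{\mathcal{X}}$ for every $s\geq 0$ (because $m^{-1}\geq 1$ and $\langle v\rangle\leq\langle R\rangle$ on $B_{R}$). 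The convolution-loss term is immediate in both target spaces: since $\int f(w)|v-w|\d w\leq\langle v\rangle\|f\|_{L^{1}_{1}}\leq C\langle v\rangle\|f\|_{\mathcal{X}}$, it is bounded pointwise by $C\,\overline{\M}(v)\langle v\rangle\|f\|_{\mathcal{X}}$, a Gaussian times a polynomial, and hence lies in $\mathcal{Y}$ and (once $M$ is chosen wide enough for $\overline{\M}^{2}M^{-1/2}$ to be integrable) in $\mathcal{H}$.

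For the $\mathcal{Y}$-bound I would test the gain terms against $\psi(v)=\langle v\rangle m^{-1}(v)$ in the weak form \eqref{weakfg}, after reducing to $f\geq 0$ through $|\Q^{+}_{\alpha}(f,g)|\leq\Q^{+}_{\alpha}(|f|,|g|)$. The only point to check is the transfer of the weight from the post-collisional velocity $v'$ back onto the integration variables: from $v'=v+\tfrac{1+\alpha}{4}(|q|\sigma-q)$ one gets $|v'|\leq 2|v|+|w|$, so $\psi(v')\leq C_{R}\langle w\rangle m^{-1}(w)$ when $|v|\leq R$ (for $\Q^{+}_{\alpha}(\chi_{B_{R}}f,\fe)$), and $\psi(v')\leq C_{R}\langle v\rangle m(v)^{-2}$ when $w\in B_{R}$ (for $\Q^{+}_{\alpha}(\fe,\chi_{B_{R}}f)$). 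In each case the remaining factor $\fe\leq\overline{\M}$ absorbs the exponential weight because a Gaussian beats $e^{2a|v|}$; bounding $|v-w|\leq\langle v\rangle\langle w\rangle$ then factorises the integral and yields $\|\mathcal{A}_{\alpha}f\|_{\mathcal{Y}}\leq C_{R}\|f\|_{\mathcal{X}}$, with $C_{R}$ uniform in $\alpha$ thanks to the uniform bound $\fe\leq\overline{\M}$. The elastic term $\L^{+}(\chi_{B_{R}}f)$ is handled identically with $\M$ in place of $\fe$.

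The real work, and the main obstacle, is the $\mathcal{H}$-bound, since passing from $\mathcal{X}=L^{1}$ to a weighted $L^{2}$ space requires a genuine gain of integrability that the $\mathcal{Y}$-estimate cannot supply. Here I would invoke the $L^{2}$ regularising property of the hard-sphere gain operator, in the bilinear form $\|\Q^{+}_{\alpha}(g,h)\|_{L^{2}}\leq C\|g\|_{L^{1}_{1}}\|h\|_{L^{2}_{1}}$, applied after the reduction $\Q^{+}_{\alpha}(\chi_{B_{R}}f,\fe)\leq\Q^{+}_{\alpha}(\chi_{B_{R}}|f|,\overline{\M})$. To recover the Maxwellian weight $M^{-1/2}$ I would exploit that inelastic collisions dissipate energy, $|v'|^{2}+|w'|^{2}\leq|v|^{2}+|w|^{2}$, so that a Gaussian weight $\exp(b(|v'|^{2}+|w'|^{2}))\leq\exp(b(|v|^{2}+|w|^{2}))$ passes through the gain operator. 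Combined with the fast decay of $\overline{\M}$ and the compact support of $\chi_{B_{R}}f$, this produces a weighted bound $\|\Q^{+}_{\alpha}(\chi_{B_{R}}|f|,\overline{\M})\|_{L^{2}(M^{-1/2})}\leq C_{R}\|f\|_{\mathcal{X}}$, provided $M$ is chosen with sufficiently large temperature that $M^{-1/2}=\exp(b|v|^{2})$ has decay rate $b$ small enough to be carried by the gain operator. The same estimate holds for the two remaining gain terms, which together with the loss-term bound gives $\mathcal{A}_{\alpha}\in\mathscr{B}(\mathcal{X},\mathcal{H})$ uniformly in $\alpha\in(\alpha_{0},1)$.

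Finally, the weak compactness follows formally. Fixing $M$ as above, Cauchy--Schwarz gives a continuous embedding $\mathcal{H}\hookrightarrow\mathcal{X}$, since $\|g\|_{\mathcal{X}}\leq\|g\|_{\mathcal{H}}\,\bigl(\int M^{1/2}(v)m(v)^{-2}\d v\bigr)^{1/2}$ and $\int M^{1/2}(v)e^{2a|v|}\d v<\infty$ because a Gaussian beats the exponential weight. Thus $\mathcal{A}_{\alpha}$ maps bounded subsets of $\mathcal{X}$ into bounded subsets of the reflexive space $\mathcal{H}$, which are relatively weakly compact there; as the embedding $\mathcal{H}\hookrightarrow\mathcal{X}$ is weak--weak continuous, their images are relatively weakly compact in $\mathcal{X}$, i.e. $\mathcal{A}_{\alpha}$ is weakly compact on $\mathcal{X}$, which completes the proof.
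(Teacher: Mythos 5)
Your proof is correct and follows essentially the same route as the paper: dominate $\fe$ by the $\alpha$-uniform Maxwellian upper bound $\overline{\M}$, treat the convolution--loss term by the elementary bound $|v-w|\leq\langle v\rangle\langle w\rangle$, control the truncated gain terms via the $L^1\times L^2\to L^2$ smoothing of $\Q^+_\alpha$ in a Gaussian-weighted space, and conclude weak compactness from the (weakly) compact embedding $\mathcal{H}\hookrightarrow\mathcal{X}$. The only cosmetic differences are that you re-derive the weighted gain estimate from the unweighted one via the energy-dissipation inequality $|v'|^2+|w'|^2\leq|v|^2+|w|^2$ where the paper directly cites the weighted convolution inequalities of Alonso--Carneiro--Gamba, you invoke reflexivity of $\mathcal{H}$ in place of the Dunford--Pettis criterion, and you spell out the $\mathscr{B}(\mathcal{X},\mathcal{Y})$ bound that the paper leaves implicit.
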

\begin{proof} Recall  that  there exist two Maxwellian distributions $\underline{\M}$ and $\overline{\M}$
  (independent of $\alpha$) such that
  $$\underline{\M}(v) \leq \fe(v) \leq \overline{\M}(v)
    \qquad \forall v \in \R^3,
    \qquad \forall \alpha \in (\alpha_0,1).$$
In particular, there exists some Maxwellian distribution $M$ such that 
$$\sup_{\alpha \in (\alpha_{0},1)}  \|\fe\|_{L^{2}_{2}(M^{-1})}=\sup_{\alpha \in (\alpha_{0},1)}\left(\int_{\R^{3}}M^{-1}(v)\langle v \rangle^{2}\,|f(v)|^{2}\d v \right)^{1/2} =C_{M}< \infty.$$
Then, because $|v-w| \leq \langle v\rangle\,\langle w\rangle$ for any
$v,w \in \R^{3}$, one has first that, for all $f \in \mathcal{X}$,
\begin{multline*}
\int_{\R^{3}}\left|\fe(v) \int_{\R^{3}} f(w)|v-w| \d w \right|^{2}
M^{-1}(v)\d v
\\
\leq
\IR |\fe(v)|^{2}\langle v\rangle^{2} M^{1}(v)\d v \,\left(\IR |f(w)|\langle w\rangle \d w\right)^{2}
\\
\leq \|\fe\|_{L^{2}_{2}(M^{-1/2})}^{2}\,\|f\|_{L^{1}_{1}}^{2}
\leq \|\fe\|_{L^{2}_{2}(M^{-1})}^{2}\,\|f\|_{\mathcal{X}}^{2}.
\end{multline*}
Moreover, according to \cite[Proposition 11]{AlCaGa}, there exists
$C > 0$ such that
\begin{align*}
  &\left\|\Q^{+}(g,h)\right\|_{L^{2}(M^{-1})}
    \leq C
    \|g\,M^{-1/2}\|_{L^{1}(\R^{3})}\,\|hM^{-1/2}\|_{L^{2}_{1}(\R^{3})}
    \quad \text{ and }
  \\ 
  &\left\|\Q^{+}(h,g)\right\|_{L^{2}(M^{-1})}
    \leq C
    \|g\,M^{-1/2}\|_{L^{1}_{1}(\R^{3})}\,\|hM^{-1/2}\|_{L^{2}(\R^{3})}.
\end{align*}
Using this with $g=f\chi_{B_{R}}$ and $h=\fe$ we get that there exists
$C > 0$ (independent of $\alpha$) such that
\begin{equation*}
  \left\|\mathscr{L}^{+}_{\alpha}(f\chi_{B_{R}})\right\|_{L^{2}(M^{-1})}
  \leq C \left(\|f\chi_{B_{R}}\|_{L^{1}_{1}(M^{-1/2})}
    + \|f\chi_{B_{R}}\|_{L^{1}(M^{-1/2})}\right).
\end{equation*}

In particular, there exists $C=C_{R} > 0$ such that
$$\left\|\mathscr{L}^{+}_{\alpha}(f\chi_{\B_{R}})\right\|_{L^{2}(M^{-1})} \leq C_{R} \|f\|_{\mathcal{X}}.$$
In the same way, 
$$\left\|\L^{+}(f\chi_{B_{R}})\right\|_{L^{2}(M^{-1})}\leq C_{R}\|f\|_{\mathcal{X}} \qquad \forall f \in \mathcal{X}.$$
This proves that
$\mathcal{A}_{\alpha} \in \mathscr{B}(\mathcal{X},\mathcal{H}).$ Due
to the Dunford-Pettis theorem the embedding
$\mathcal{H} \hookrightarrow \mathcal{X}$ is weakly compact, which
proves the second part of the Lemma.
\end{proof}

We this in hands, one has the following result about the decay of the
semigroup $\left(\mathcal{S}_\alpha(t)\right)_{t \geq 0}$ in
$\mathcal{X}$ generated by $\mathscr{L}_{\alpha}$. Remember that $\nu$
is the spectral gap of the elastic linearised operator
$\mathscr{L}_{1}$.

\begin{theo}\label{theo:local}
  Take $0 < \nu_* < \nu$ and $0 < \beta_{\star} < a$ (where $\beta >0$ is such
  that \eqref{Bdiss} holds). With the notations of Theorem
  \ref{thm:spectral-gap-alpha} and Lemma \ref{lem:Bdiss}, let
  $\alpha_{1}=\max(\alpha_{*},\alpha^{\dagger})$. Then, for any
  $\alpha \in (\alpha_{1},1)$ and for any $\mu \in (\nu_{*},\nu)$,
  there exists $C=C(\mu,\alpha) > 0$ such that
  \begin{equation*}
    \left\|
      \mathcal{S}_{\alpha}(t)\left(\mathbb{I-P}_{\alpha}\right)
    \right\|_{\mathscr{B}(\mathcal{X})}
    \leq
    C e^{-\mu t} \qquad \forall t \geq 0
  \end{equation*}
  where $\mathbb{P}_{\alpha}$ is the projection operator over
  $\mathrm{Span}(\fe)$ in $\mathcal{X}$. In other words, for any
  $h_0 \in \mathcal{X}$ the solution $h = h(t,v)$ (in the sense of
  semigroups) of the equation
  \begin{equation*}
    \partial_t h = \mathscr{L}_\alpha(h)
  \end{equation*}
  satisfies
  \begin{equation*}
    \| h(t) - c G_\alpha\|_{\mathcal{X}} \leq C \|h_0\| e^{-\mu t}
    \quad
    \text{ for $t \geq 0$},
  \end{equation*}
  where $c := \int_{\R^3} h_0$.
\end{theo}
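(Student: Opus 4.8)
The plan is to convert the spectral gap of the generator $\mathscr{L}_{\alpha}$ obtained in Theorem \ref{thm:spectral-gap-alpha} into an exponential decay of the semigroup $(\mathcal{S}_{\alpha}(t))_{t\geq 0}$; the only obstruction is the failure of the spectral mapping theorem in $\mathcal{X}=L^1(m^{-1})$. I would build on the splitting $\mathscr{L}_{\alpha}=\mathcal{A}_{\alpha}+\mathcal{B}_{\alpha}$ of Proposition \ref{prop:ABalpha}, where $\mathcal{B}_{\alpha}$ generates a semigroup $(\mathcal{U}_{\alpha}(t))_{t\geq 0}$ with $\|\mathcal{U}_{\alpha}(t)\|_{\mathscr{B}(\mathcal{X})}\leq e^{-\beta_{\star}t}$ and where, by the subsequent Lemma, $\mathcal{A}_{\alpha}$ is a weakly compact operator on $\mathcal{X}$. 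The objective of the core argument is to show that the essential type of $\mathcal{S}_{\alpha}$ is not worsened by the weakly compact perturbation $\mathcal{A}_{\alpha}$, i.e. $\omega_{\mathrm{ess}}(\mathcal{S}_{\alpha})\leq -\beta_{\star}$, and then to invoke the spectral gap to control the remaining, discrete, part of the spectrum.

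First I would write the Dyson--Phillips (iterated Duhamel) expansion $\mathcal{S}_{\alpha}(t)=\sum_{n\geq 0}\mathcal{U}_{n}(t)$ with $\mathcal{U}_{0}=\mathcal{U}_{\alpha}$ and $\mathcal{U}_{n+1}(t)=\int_0^t \mathcal{U}_{\alpha}(t-s)\,\mathcal{A}_{\alpha}\,\mathcal{U}_{n}(s)\,\d s$, the series converging in $\mathscr{B}(\mathcal{X})$ thanks to the bound $\|\mathcal{U}_{n}(t)\|_{\mathscr{B}(\mathcal{X})}\leq e^{-\beta_{\star}t}\,(t\,\|\mathcal{A}_{\alpha}\|_{\mathscr{B}(\mathcal{X})})^n/n!$. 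Because the weakly compact operators form a closed two-sided ideal stable under operator-norm (Bochner) integration, each $\mathcal{U}_{n}(t)$ with $n\geq 1$ is weakly compact; and since $L^1$ enjoys the Dunford--Pettis property, a composition of two weakly compact operators is compact, so $\mathcal{U}_{n}(t)$ is \emph{compact} for every $n\geq 2$. Consequently $\mathcal{S}_{\alpha}(t)=\mathcal{U}_{\alpha}(t)+\mathcal{U}_{1}(t)+\mathcal{K}(t)$ with $\mathcal{K}(t)$ compact and $\mathcal{U}_{1}(t)$ weakly compact. The stability of the essential spectral radius under weakly compact perturbations in $L^1$ (the tool announced in the introduction) then gives $r_{\mathrm{ess}}(\mathcal{S}_{\alpha}(t))=r_{\mathrm{ess}}(\mathcal{U}_{\alpha}(t))$, whence $\omega_{\mathrm{ess}}(\mathcal{S}_{\alpha})=\omega_{\mathrm{ess}}(\mathcal{U}_{\alpha})\leq -\beta_{\star}$.

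With the essential spectrum thus localised, the portion of $\mathfrak{S}(\mathcal{S}_{\alpha}(t))$ lying outside the disc of radius $e^{-\beta_{\star}t}$ is discrete and, by the spectral mapping theorem for the point spectrum, equals $\{e^{\lambda t}\,:\,\lambda\in\mathfrak{S}(\mathscr{L}_{\alpha}),\ \Re\lambda>-\beta_{\star}\}$. I would then restrict everything to $\widehat{\mathcal{X}}=\mathrm{Range}(\mathbb{I}-\mathbb{P}_{\alpha})$, which is invariant under $\mathcal{S}_{\alpha}(t)$ since $\mathbb{P}_{\alpha}$ commutes with the semigroup. Applying Theorem \ref{thm:spectral-gap-alpha} (with a gap parameter chosen above $\mu$, legitimate since $\mu<\nu$) and keeping $\beta_{\star}>\mu$, both the essential type and the spectral bound $s(\widehat{\mathscr{L}}_{\alpha})$ fall strictly below $-\mu$; hence $\omega_0(\mathcal{S}_{\alpha}\vert_{\widehat{\mathcal{X}}})<-\mu$ and $\|\mathcal{S}_{\alpha}(t)\vert_{\widehat{\mathcal{X}}}\|_{\mathscr{B}(\widehat{\mathcal{X}})}\leq C e^{-\mu t}$. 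Since $\widehat{\mathcal{X}}=\mathrm{Range}(\mathbb{I}-\mathbb{P}_{\alpha})$ this is exactly $\|\mathcal{S}_{\alpha}(t)(\mathbb{I}-\mathbb{P}_{\alpha})\|_{\mathscr{B}(\mathcal{X})}\leq C e^{-\mu t}$. The reformulation then follows by decomposing $h_0=\mathbb{P}_{\alpha}h_0+(\mathbb{I}-\mathbb{P}_{\alpha})h_0=c\,G_{\alpha}+(\mathbb{I}-\mathbb{P}_{\alpha})h_0$ with $c=\int_{\R^{3}}h_0$ (Proposition \ref{lem:simple}) and using $\mathcal{S}_{\alpha}(t)\mathbb{P}_{\alpha}=\mathbb{P}_{\alpha}$.

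The main obstacle is the middle step: showing that the merely \emph{weakly} compact operator $\mathcal{A}_{\alpha}$ does not enlarge the essential type. Everything hinges on upgrading weak compactness to genuine compactness of the higher Dyson--Phillips iterates through the Dunford--Pettis property of $L^1$, while keeping careful control of the strong continuity and the operator-norm convergence of the iterated integrals so that the essential-radius stability theorem is applicable. By comparison, the spectral mapping theorem for the point spectrum, the passage to the restricted semigroup on $\widehat{\mathcal{X}}$, and the bookkeeping of the constants $\nu_*$, $\mu$ and $\beta_{\star}$ are routine.
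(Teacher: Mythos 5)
Your proposal is correct and follows essentially the same route as the paper: the splitting $\mathscr{L}_{\alpha}=\mathcal{A}_{\alpha}+\mathcal{B}_{\alpha}$, a Duhamel-type argument showing that $\mathcal{S}_{\alpha}(t)-\mathcal{U}_{\alpha}(t)$ is weakly compact, the stability of the Schechter essential spectrum under weakly compact perturbations in $L^{1}$ to obtain $\omega_{\mathrm{ess}}(\mathcal{S}_{\alpha})\leq-\beta_{\star}$, and then Theorem \ref{thm:spectral-gap-alpha} to control the discrete part of the spectrum via the semigroup decomposition theorem. The only (harmless) difference is that the paper stops at a single Duhamel iteration and invokes the convex compactness property for the strong operator topology (\cite{Mmk}) to justify that the strongly-continuous operator integral is weakly compact, rather than the full Dyson--Phillips expansion with the Dunford--Pettis upgrade to genuine compactness, which your argument does not actually need.
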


\begin{proof}
  Denote by $(\mathcal{U}_\alpha(t))_{t \geq 0}$ the semigroup in
  $\mathcal{X}$ generated by $\mathcal{B}_\alpha$. Since
  $\mathscr{L}_{\alpha}=\mathcal{A}_{\alpha}+\mathcal{B}_{\alpha}$, it
  is well known from Duhamel's formula that
  \begin{equation*}
    \mathcal{S}_{\alpha}(t)
    = \mathcal{U}_{\alpha}(t)
    + \int_{0}^{t}\mathcal{S}_{\alpha}(t-s)
    \mathcal{A}_{\alpha}\,\mathcal{U}_{\alpha}(s)\d s.
  \end{equation*}
  Since $\mathcal{A}_{\alpha}$ is weakly compact in $\mathcal{X}$, one gets that, for any $t \geq s \geq 0,$ the integrand $\mathcal{S}_{\alpha}(t-s)\mathcal{A}_{\alpha}\,\mathcal{U}_{\alpha}(s)$ is a weakly compact operator in $\mathcal{X}$. Using then the ``strong compactness property'' (see \cite[Theorem C.7]{engel} for general reference and  \cite{Mmk} for the extension to weakly compact operators in $L^{1}$-spaces) we get that
  \begin{equation*}
    \text{ $\mathcal{S}_{\alpha}(t) -\mathcal{U}_{\alpha}(t)$
      is a weakly compact operator in
      $\mathcal{X}$ for all $t \geq 0$.}
  \end{equation*}
  We recall that, by definition, the Schechter essential spectrum is
  stable under compact perturbations. However, it can be shown that
  in $L^{1}$-spaces it is actually stable under \emph{weakly compact}
  perturbations, see \cite[Theorem 3.2 \& Remark 3.3]{latrach}. Due to
  this property we have
  \begin{equation*}
    \mathfrak{S}_{\mathrm{ess}}(\mathcal{S}_{\alpha}(t))
    = \mathfrak{S}_{\mathrm{ess}}(\mathcal{U}_{\alpha}(t))
    \qquad \forall t \geq 0.
  \end{equation*}
  In particular, the two $C_{0}$-semigroups share the same
  \emph{essential type}, i.e.
  $\omega_{\mathrm{ess}}(\mathcal{S}_{\alpha})=\omega_{\mathrm{ess}}(\mathcal{U}_{\alpha}).$
  Since
  $\omega_{\mathrm{ess}}(\mathcal{U}_{\alpha}) \leq
  \omega_{0}(\mathcal{U}_{\alpha})$ we get
  $$\omega_{\mathrm{ess}}(\mathcal{S}_{\alpha}) \leq -\beta_{\star} < 0.$$
  Since
  $$\omega_{0}(\mathcal{S}_{\alpha})=\max\left(\omega_{\mathrm{ess}}(\mathcal{S}_{\alpha}),s(\mathscr{L}_{\alpha})\right)$$
  with $s(\mathscr{L}_{\alpha})=0$ we obtain that 
  $$\omega_{0}(\mathcal{S}_{\alpha})=0 > \omega_{\mathrm{ess}}(\mathcal{S}_{\alpha}).$$
  General theory of $C_{0}$-semigroups \cite[Theorem V.3.1, page
  329]{engel} ensures that, for any
  $\omega > \omega_{\mathrm{ess}}(\mathcal{S}_{\alpha})$, one has
  $$\mathfrak{S}(\mathscr{L}_{\alpha}) \cap \{\lambda \in \mathbb{C}\,;\,\Re\lambda \geq \omega\}=\{\lambda_{1},\ldots,\lambda_{\ell}\}$$
  with $\lambda_{i}$ eigenvalue of $\mathscr{L}_{\alpha}$ with finite algebraic multiplicities $k_{i}$ and $\Re\lambda_{1}\geq \ldots \geq \Re\lambda_{\ell}$ and there is $C_{\omega} >0$ such that
  $$\left\|\mathcal{S}_{\alpha}(t)(I-\Pi_{\omega})\right\| \leq C_{\omega}\,\exp(\omega\,t) \qquad \forall t \geq 0$$
  where  $\Pi_{\omega}$ is the spectral projection associated to the set $\{\lambda_{1},\ldots,\lambda_{\ell}\}$.
  In particular, choosing $\ell=2$, since $\lambda_{1}=0$ while $\Re \lambda_{2}$ is the spectral gap of $\mathscr{L}_{\alpha}$ (see  Theorem  \ref{thm:spectral-gap-alpha}), choosing then $0< \mu <  \nu_{*}$, 
  $$\mathfrak{S}(\mathscr{L}_{\alpha}) \cap \{\lambda \in \mathbb{C}\,;\,\Re\lambda \geq -\mu\}=\{\lambda_{1}\}=\{0\}$$
  we get the result  since $\Pi_{\mu}=\mathbb{P}_{\alpha}$ is the
  spectral projection on the simple eigenvalue $0$.
\end{proof}

\section{Local stability of the steady state}
\label{sec:stability}

Using the result of the previous section, we show here that when $\alpha$ is close to $1$ and the initial
condition is close to the steady state, solutions to Eq. \eqref{BEev}
converge to it exponentially fast.  Choosing $\alpha$ close enough to
$1$ and the initial condition close enough to equilibrium, the
exponential speed of convergence can be as close to $\nu$ as we
want. In all this section, we shall assume that the initial datum
$f_0$ is such that there exist $b >0, s \in (0,1)$  satisfying

\begin{equation}
  \label{Hypf0}
  \IR f_0(v) \exp( b|v|)\d v < \infty.
\end{equation}
Then, according to Theorem \ref{tailT}, there exists $0 < a < b$
(independent of $\alpha$) such that the solution $f(t)$ of
\eqref{BEev} satisfies
$$f(t) \in \mathcal{X} \qquad \forall t \geq 0$$
where we recall that
\begin{equation*}
  \label{def:m}
  m(v)=\exp\left(-a |v|\right), \quad (a >0), \qquad \text{ and } \quad \mathcal{X}=L^{1}(\R^{3},m^{-1}(v)\d v).
\end{equation*}
 Actually, assuming a bit more on the
initial datum, one can prove the following where we recall that
$\mathcal{Y}=L^1_1(m^{-1})$:

\begin{lemme}
  \label{lem:evolution-Y-X-estimate}
  Let $f(t,v)$ be the solution to \eqref{BEev} associated to a
  nonnegative initial condition $f_0$ satisfying \eqref{mass} and
  \eqref{Hypf0}. Then, for any $\varepsilon > 0$ there exists $C > 0$
  depending only on $\varepsilon$ and the moment \eqref{Hypf0} of
  $f_0$ such that
  \begin{equation}
    \label{eq:evolution-Y-X-estimate}
    \|f(t)\|_{\mathcal{Y}} \leq C \|f(t)\|_{\mathcal{X}}^{1-\varepsilon}
    \qquad  \forall t \geq 0 .
  \end{equation}
\end{lemme}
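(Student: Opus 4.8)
The plan is to derive \eqref{eq:evolution-Y-X-estimate} from a single interpolation inequality, the key input being the uniform-in-time exponential moment furnished by Theorem \ref{tailT}. The mechanism is that $\mathcal{Y}$ only adds one power of $\langle v \rangle$ to the weight of $\mathcal{X}$, while the solution actually possesses an exponential tail $\exp(r|v|)$ with $r$ strictly larger than $a$; this surplus exponential decay is what will be traded for the small loss $\varepsilon$ in the exponent. Throughout I may assume $\varepsilon \in (0,1)$, the case $\varepsilon \geq 1$ being weaker since $\|f(t)\|_{\mathcal{X}} \geq \IR f(t,v)\d v = 1$ by nonnegativity and \eqref{mass}, while $\sup_{t}\|f(t)\|_{\mathcal{X}} < \infty$.

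First I would record the needed moment bound. Since $f_0$ satisfies \eqref{Hypf0}, Theorem \ref{tailT} (with $s=1$) provides a rate $r$ with $a < r \leq b$ and a constant $C_0 < \infty$, depending only on the moment \eqref{Hypf0} of $f_0$, such that
\begin{equation*}
  \sup_{t \geq 0}\IR f(t,v)\exp(r|v|)\d v \leq C_0 .
\end{equation*}
This is the only step that uses the dynamics \eqref{BEev}; from here on $g := f(t,\cdot)$ is treated, for each fixed $t$, as a fixed nonnegative function.

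Next I would interpolate. Writing $\theta = \varepsilon$ and splitting the integrand of $\|g\|_{\mathcal{Y}}$ as
\begin{equation*}
  g(v)\,\langle v \rangle\exp(a|v|)
  = \bigl(g(v)\exp(a|v|)\bigr)^{1-\theta}
    \bigl(g(v)\,\langle v \rangle^{1/\theta}\exp(a|v|)\bigr)^{\theta},
\end{equation*}
Hölder's inequality with conjugate exponents $\tfrac1{1-\theta}$ and $\tfrac1{\theta}$ yields
\begin{equation*}
  \|g\|_{\mathcal{Y}}
  \leq \Bigl(\IR g(v)\exp(a|v|)\d v\Bigr)^{1-\theta}
       \Bigl(\IR g(v)\,\langle v \rangle^{1/\theta}\exp(a|v|)\d v\Bigr)^{\theta}.
\end{equation*}
The first factor is exactly $\|g\|_{\mathcal{X}}^{1-\varepsilon}$. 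Because $a < r$, one has $\langle v \rangle^{1/\theta}\exp(a|v|) \leq C_\varepsilon\exp(r|v|)$ for a constant $C_\varepsilon$ (a fixed power of $\langle v \rangle$ times $\exp((a-r)|v|)$ is bounded), so the second factor is controlled by $(C_\varepsilon C_0)^\theta$ through the moment bound above. Combining gives \eqref{eq:evolution-Y-X-estimate} with $C = (C_\varepsilon C_0)^\varepsilon$, which depends only on $\varepsilon$ and on the moment \eqref{Hypf0} of $f_0$.

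There is no genuinely hard step: once Theorem \ref{tailT} supplies the uniform tail, the estimate is a one-line Hölder interpolation. The only point deserving care is to secure the strict inequality $a < r$, so that the auxiliary weight $\langle v \rangle^{1/\varepsilon}$ can be absorbed into the exponential gap between the rates $a$ and $r$; this is exactly why $a$ was chosen strictly below $b$ in the setup preceding the statement.
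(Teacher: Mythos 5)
Your proof is correct and takes essentially the same route as the paper: a H\"older interpolation splitting $\|f(t)\|_{\mathcal{Y}}$ into $\|f(t)\|_{\mathcal{X}}^{1-\varepsilon}$ times a polynomially weighted exponential moment, the latter bounded uniformly in time via Theorem \ref{tailT} and the strict gap $a<r$. The only (immaterial) difference is that you fix the conjugate exponent exactly at $1/(1-\varepsilon)$, whereas the paper takes $1/p>1-\varepsilon$ and then uses $\|f(t)\|_{\mathcal{X}}\geq 1$.
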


\begin{proof}
  By H\"older's inequality, taking a dual pair $p$, $q$, i.e. $1/p+1/q=1$
  \begin{multline*}
    \|f(t)\|_\mathcal{Y}
    =
    \int_{\R^3} f(t,v) \langle v \rangle m^{-1}(v) \d v
    \\
    \leq
    \left(
      \int_{\R^3} f(t,v) m^{-1}(v) \d v
    \right)^{1/p}
    \left(
      \int_{\R^3} f(t,v) \langle v \rangle^q m^{-1}(v) \d v
    \right)^{1/q}
    \\
    =
    \|f(t)\|^{1/p}_{\mathcal{X}} \left(
      \int_{\R^3} f(t,v) \langle v \rangle^q m^{-1}(v) \d v
    \right)^{1/q}.
  \end{multline*}
  The initial datum $f_0$ has an exponential tail of order $1$ and
  Theorem \ref{tailT} ensures that this property propagates with time;
  hence for any $p \geq 1$ there exists $C >0$ such that
  \begin{equation*}
    \sup_{t \geq 0}  \left(
      \int_{\R^3} f(t,v) \langle v \rangle^q m^{-1}(v) \d v
    \right)^{1/q} \leq C < \infty,
  \end{equation*}
  where the constant $C >0$ depends only on $q$ and $f_0$ (and not on
  $\alpha$). Hence, choosing $p$ such that $1/p > 1-\varepsilon$
  proves the lemma.
\end{proof}

We deduce from the above lemma the following stability result:

\begin{theo}
  \label{thm:exp-convergence}
  Take $0 < \nu_* < \nu$. There exist $0 < \alpha_* < 1$ and
  $\varepsilon > 0$ such that, for all $\alpha$ with
  $\alpha_* \leq \alpha \leq 1$ and all nonnegative
  $f_0 \in \mathcal{Y}$ satisfying \eqref{mass} and \eqref{Hypf0} and
  such that $\|f_0 - \fe\|_\mathcal{X} \leq \varepsilon$, the solution
  $f(t)$ of \eqref{BEev} with initial data $f_0$ satisfies
  \begin{equation}
    \label{eq:exp-convergence}
    \|f(t) - \fe\|_\mathcal{X} \leq C \exp(-\nu_* t) \|f_0-\fe\|_\mathcal{X}
    \qquad (t \geq 0)
  \end{equation}
  for some constant $C$ which depends only on $\alpha$ and the moment
  \eqref{Hypf0} of the initial condition $f_0$.
\end{theo}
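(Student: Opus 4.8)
The plan is to linearise around the steady state and treat \eqref{BEev} as a perturbation of the linear flow generated by $\mathscr{L}_\alpha$, whose decay is quantified by Theorem \ref{theo:local}. Setting $h(t)=f(t)-\fe$ and using that $\fe$ solves \eqref{equi} while $f$ solves \eqref{BEev}, the bilinearity of $\Q_\alpha$ gives
$$\partial_t h = \mathscr{L}_\alpha h + \Q_\alpha(h,h).$$
Since $f(t)$ and $\fe$ both have unit mass and mass is the only collision invariant of $\Q_\alpha$ and $\L$, the perturbation $h(t)$ has zero mass for every $t$; hence $h(t)\in\widehat{\mathcal{X}}=(\mathbb{I}-\mathbb{P}_\alpha)\mathcal{X}$ and, likewise, $\Q_\alpha(h,h)\in\widehat{\mathcal{X}}$. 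Writing Duhamel's formula
$$h(t)=\mathcal{S}_\alpha(t)h_0+\int_0^t \mathcal{S}_\alpha(t-s)\,\Q_\alpha(h(s),h(s))\,\d s,\qquad h_0=f_0-\fe,$$
every occurrence of $\mathcal{S}_\alpha$ acts on an element of $\widehat{\mathcal{X}}$, so Theorem \ref{theo:local} applies and $\|\mathcal{S}_\alpha(t)(\mathbb{I}-\mathbb{P}_\alpha)\|_{\mathscr{B}(\mathcal{X})}\le C_1 e^{-\mu t}$ for any fixed $\mu\in(\nu_*,\nu)$.

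Next I would estimate the nonlinear term in $\mathcal{X}$. Proposition \ref{propoAlo} yields $\|\Q_\alpha(h,h)\|_{\mathcal{X}}\le C\|h\|_{\mathcal{Y}}^2$, and the interpolation of Lemma \ref{lem:evolution-Y-X-estimate}, applied to $h$ instead of $f$ — which is legitimate because $|h|\le f+\fe$ and both $f(t)$ and $\fe$ have uniformly bounded high moments by Theorem \ref{tailT} — gives, for an arbitrarily small $\vartheta>0$, a constant with $\|h(t)\|_{\mathcal{Y}}\le C\|h(t)\|_{\mathcal{X}}^{1-\vartheta}$. Combining these, $\|\Q_\alpha(h(s),h(s))\|_{\mathcal{X}}\le C_2\|h(s)\|_{\mathcal{X}}^{2(1-\vartheta)}$, a genuinely superlinear bound once $\vartheta<\tfrac12$. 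With $u(t):=\|h(t)\|_{\mathcal{X}}$ the Duhamel identity then becomes
$$u(t)\le C_1 e^{-\mu t}u(0)+C_2\int_0^t e^{-\mu(t-s)}u(s)^{2(1-\vartheta)}\,\d s.$$

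The final step is a continuation (bootstrap) argument. I would fix $\mu\in(\nu_*,\min(\nu,2\nu_*))$ — a nonempty range since $\nu_*<\nu$ — and $\vartheta$ so small that $\gamma:=2(1-\vartheta)\nu_*>\mu$, and set $M=C_1+1$. Assuming the a priori bound $u(t)\le M e^{-\nu_* t}u(0)$ on a maximal interval, inserting it into the integral and using $\gamma>\mu>\nu_*$ to compute $\int_0^t e^{-\mu(t-s)}e^{-\gamma s}\,\d s\le(\gamma-\mu)^{-1}e^{-\nu_* t}$ gives
$$u(t)\le\Big(C_1 u(0)+\tfrac{C_2 M^{2(1-\vartheta)}}{\gamma-\mu}\,u(0)^{2(1-\vartheta)}\Big)e^{-\nu_* t},$$
which is strictly stronger than $M e^{-\nu_* t}u(0)$ as soon as $\tfrac{C_2 M^{2(1-\vartheta)}}{\gamma-\mu}\,u(0)^{1-2\vartheta}<1$, i.e. whenever $\|f_0-\fe\|_{\mathcal{X}}=u(0)\le\varepsilon$ with $\varepsilon$ small enough. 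Since $t\mapsto u(t)$ is continuous (by well-posedness of \eqref{BEev} in $\mathcal{X}$), the set of $t\ge0$ on which the a priori bound holds is both open and closed, hence equal to $[0,\infty)$; this is exactly \eqref{eq:exp-convergence} with $C=M$, valid for $\alpha$ close enough to $1$ for Theorem \ref{theo:local} to apply.

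The main obstacle is the norm mismatch between the space in which the nonlinearity is controlled and the one in which the semigroup decays: $\Q_\alpha(h,h)$ is bounded in $\mathcal{X}$ only through the stronger norm $\mathcal{Y}$, while Theorem \ref{theo:local} provides decay in $\mathcal{X}$. The device that resolves this is the interpolation of Lemma \ref{lem:evolution-Y-X-estimate}, which trades part of the $\mathcal{Y}$-norm for the propagated high-energy tails of Theorem \ref{tailT} and upgrades the bilinear bound to the superlinear estimate $\|\Q_\alpha(h,h)\|_{\mathcal{X}}\lesssim\|h\|_{\mathcal{X}}^{2(1-\vartheta)}$. The quantitative cost of this interpolation must then be balanced against the exponential rate, which is precisely what constrains $\mu$ to lie between $\nu_*$ and $2\nu_*$ and forces $\vartheta$ to be taken small.
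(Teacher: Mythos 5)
Your proposal is correct and follows essentially the same route as the paper: write $h=f-\fe$, apply Duhamel with the decay of $\mathcal{S}_\alpha(t)(\mathbb{I}-\mathbb{P}_\alpha)$ from Theorem \ref{theo:local}, control $\Q_\alpha(h,h)$ via Proposition \ref{propoAlo} combined with the interpolation of Lemma \ref{lem:evolution-Y-X-estimate}, and close with a smallness/continuation argument. The only differences are cosmetic: the paper fixes the interpolation parameter ($\varepsilon=1/4$, giving exponent $3/2$) and runs a Gronwall inequality on $e^{\mu t}\|h(t)\|_{\mathcal{X}}$ under the a priori bound $\|h(t)\|_{\mathcal{X}}\leq\delta$, whereas you keep a general $\vartheta$ and bootstrap directly on the exponential ansatz.
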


\begin{proof}
Let $\alpha \in (\alpha_0,1]$ and let $f(t,v)$ be the solution to \eqref{BEev} associated to the initial datum $f_0$. Setting $h(t,v) := f(t,v) - \fe(v)$, we may rewrite \eqref{BEev} as
  \begin{equation*}
    \partial_t h = \mathscr{L}_{\alpha}(h)
    + \mathcal{Q}_\alpha(h,h),
  \end{equation*}
  and through Duhamel's formula, denoting by
  $\left(\mathcal{S}_\alpha(t)\right)_{t \geq 0}$ the semigroup generated by
  $\mathscr{L}_{\alpha}$,
  \begin{equation*}
    h(t) =
    \mathcal{S}_\alpha(t) [h(0)]
    + \int_0^t \mathcal{S}_\alpha({t-s})
    [\mathcal{Q}_\alpha(h(s),h(s))] \,\d s.
  \end{equation*}
    Take any $\mu$ such that $\nu_* < \mu < \nu$ and consider $\alpha_0 < \alpha_* <
  1$ such that for every $\alpha_* \leq \alpha \leq 1$, the operator
  $\mathscr{L}_{\alpha}$ has a spectral gap of size $\mu$
  (the existence of such $\alpha_*$ is warranted by  Theorem
  \ref{thm:spectral-gap-alpha}).  Then, for this range of $\alpha$, the
  semigroup $((\mathbb{I}-\mathbb{P}_{\alpha})\mathcal{S}_\alpha(t))_{t\geq 0}$ decays exponentially with speed
  $\mu$. Recalling that $\mathrm{Range}(\mathbb{I-P}_{\alpha})=\widehat{\mathcal{X}}$ and that $h(t) \in \widehat{\mathcal{X}}$ for any $t \geq 0$ we get that there exists  $C > 0$ which depends only on $\alpha$ such that,
  \begin{multline*}
    \|h(t)\|_{\widehat{\mathcal{X}}}
    \leq
    \big\|
      \mathcal{S}_\alpha(t) [h(0)]
    \big\|_{\widehat{\mathcal{X}}}
    +
    \int_0^t \big\|
    \mathcal{S}_\alpha({t-s})
    [\mathcal{Q}_\alpha(h(s),h(s))]
    \big\|_{\widehat{\mathcal{X}}} \,\d s
    \\
    \leq
    C \| h(0)\|_{\widehat{\mathcal{X}}} \,\exp(-\mu t)
    +
    C \int_0^t
    \| \mathcal{Q}_\alpha(h(s),h(s)) \|_{\widehat{\mathcal{X}}}\, \exp(-\mu(t-s))
    \,\d s
    \\
    \leq
    C \| h(0)\|_{\widehat{\mathcal{X}}} \, \exp(-\mu t)
    +
    C \int_0^t
    \| h(s) \|_{\widehat{\mathcal{Y}}}^2 \, \exp(-\mu(t-s))
    \,\d s
    \\
    \leq
    C \| h(0)\|_{\widehat{\mathcal{X}}} \, \exp(-\mu t)
    +
    C_2 \int_0^t
    \| h(s) \|_{\widehat{\mathcal{X}}}^{3/2} \, \exp(-\mu(t-s))
    \,\d s,
  \end{multline*}
  where we have used Lemma \ref{lem:evolution-Y-X-estimate} with
  $\varepsilon = 1/4$.

  From this point, a Gronwall argument is enough to show that for
  $\|h(0)\|_{\widehat{\mathcal{X}}}$ small enough, $h(t)$ converges
  exponentially fast to $0$, with a speed as close to $\mu$ as we
  want. Let us develop this argument more precisely. Take $\delta > 0$
  and $f_0$ such that
  $\|h(0)\|_{\widehat{\mathcal{X}}} \leq \delta/2$, and consider $t$
  in a time interval $[0,T_\delta]$ where
  $\|h(t)\|_{\widehat{\mathcal{X}}} \leq \delta$. Then,
  \begin{equation*}
    \|h(t)\|_{\widehat{\mathcal{X}}}
    \leq
    C \| h(0)\|_{\widehat{\mathcal{X}}} \,\exp(-\mu t)
    +
    \delta^{1/2} C_2 \int_0^t
    \| h(s) \|_{\widehat{\mathcal{X}}} \, \exp(-\mu(t-s))
    \,\d s
  \end{equation*}
  or, rewriting this for the quantity $\gamma(t) := \exp(\mu t)
  \|h(t)\|_{\widehat{\mathcal{X}}}$,
  \begin{equation*}
    \gamma(t)
    \leq
    C \gamma(0)
    +
    \delta^{1/2} C_2 \int_0^t \gamma(s) \,ds
    \qquad (t \in [0,T_\delta]).
  \end{equation*}
  Then, by Gronwall's Lemma,
  \begin{equation*}
    \gamma(t) \leq C \gamma(0)\, \exp(\delta^{1/2} C_2 t)
    \qquad (t \in [0,T_\delta]),
  \end{equation*}
  or equivalently,
  \begin{equation}
    \label{eq:expconv-final}
    \|h(t)\|_{\widehat{\mathcal{X}}}
    \leq C \|h(0)\|_{\widehat{\mathcal{X}}} \,\exp((\delta^{1/2} C_2 - \mu) t)
    \qquad (t \in [0,T_\delta]).
  \end{equation}
  Now, take $\delta$ small enough so that $\delta^{1/2} C_2 - \mu < -
  \nu_*$, and $\varepsilon < \delta/2$ small so that
  \begin{equation*}
    C \varepsilon \, \exp(-\nu_* t)
    \leq \delta.
  \end{equation*}
  Then, for $\|h(0)\|_{\widehat{\mathcal{X}}} < \varepsilon$, one can actually take
  $T_\delta = +\infty$ and \eqref{eq:expconv-final} finishes the
  proof.
\end{proof}

\section{Global stability}

\subsection{Evolution of the relative entropy}

We consider the evolution of the relative entropy of a solution
$f(t,v)$ to \eqref{be:force} with respect to the equilibrium
Maxwellian $\M$. Notice that $\M$ is \emph{not the steady solution}
associated to \eqref{be:force} and $f(t,v)$ is not expected to
converge towards $\M$. However, we shall take advantage of the
entropy-entropy production estimate satisfied by $\L$ to estimate
first the distance from $f(t)$ to $\M$ which, combined with Theorem
\ref{limit1}, yields a control of the distance between $f(t)$ and
$\fe.$ Using the entropy-entropy production estimate, Theorem
\ref{theo:bcl1}, obtained in \cite{BCL1}, one has the following
crucial estimate on the evolution of the relative entropy along
solutions to \eqref{be:force}
\begin{propo}\label{prop:entropy}
For any $\alpha \in (\alpha_{0},1]$ and any nonnegative initial datum $f_{0}$ with unit mass and $H(f_{0}|\M) < \infty,$ the solution $f(t)=f(t,v)$ to \eqref{be:force} satisfies: 
$$H(f(t)|\M) \leq \exp(-\lambda t)H(f_{0}|\M) + K(1-\alpha) \qquad \forall t \geq 0\;;\;\alpha \in (\alpha_{0},1]$$
for some positive constant $K > 0$ independent of $\alpha$ where $\lambda >0$ is the constant appearing in Theorem \ref{theo:bcl1}
\end{propo}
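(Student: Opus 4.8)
The plan is to monitor the relative entropy $H(f(t)|\M)$ along the flow and to close a Gronwall-type differential inequality with the help of the entropy--entropy production estimate of Theorem \ref{theo:bcl1}. First, differentiating $H(f(t)|\M)$ under the integral sign---which is legitimate thanks to the uniform-in-time moment and regularity bounds recalled in Section 2, see \eqref{eq:propmoments}---and using mass conservation to cancel the constant term arising from $\partial_t\big(f\log(f/\M)\big)$, one finds that $\frac{\mathrm{d}}{\mathrm{d}t}H(f(t)|\M)$ equals $\IR\big(\Q_\alpha(f,f)+\L f\big)\log(f/\M)\,\d v$. Since $\IR \L f\,\log(f/\M)\,\d v=-\mathbf{D}(f)$ by the very definition of $\mathbf{D}$, this reads
\begin{equation*}
  \frac{\mathrm{d}}{\mathrm{d}t}H(f(t)|\M)
  = \IR \Q_\alpha(f,f)\log\left(\frac{f}{\M}\right)\d v - \mathbf{D}(f).
\end{equation*}
By Theorem \ref{theo:bcl1} one has $\mathbf{D}(f(t))\geq \lambda\,H(f(t)|\M)$, so everything reduces to establishing a bound of the form $\IR \Q_\alpha(f,f)\log(f/\M)\,\d v\leq K_0(1-\alpha)$ with $K_0>0$ independent of $\alpha$ and of $t$.

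Second, I would peel off the elastic operator by writing $\Q_\alpha=\Q_1+(\Q_\alpha-\Q_1)$. The elastic contribution is nonpositive: $\IR \Q_1(f,f)\log f\,\d v\leq 0$ is the classical $H$-theorem, while $\IR \Q_1(f,f)\log\M\,\d v=0$ because $\log\M$ is an affine--quadratic function of $v$ and $\Q_1$ conserves mass, momentum and energy; hence $\IR \Q_1(f,f)\log(f/\M)\,\d v\leq 0$. It thus suffices to control the \emph{inelasticity defect}
\begin{equation*}
  \mathcal{R}_\alpha(f):=\IR\big(\Q_\alpha(f,f)-\Q_1(f,f)\big)\log\left(\frac{f}{\M}\right)\d v
\end{equation*}
and to show $\mathcal{R}_\alpha(f)\leq K_0(1-\alpha)$.

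Third, I would split $\log(f/\M)=\log f-\log\M$ inside $\mathcal{R}_\alpha(f)$. The $\log\M$-part is elementary: since $\Q_\alpha$ conserves mass and momentum for every $\alpha$, only the quadratic term of $\log\M$ survives, and the explicit computation $\mathcal{A}_\alpha[|v|^2](v,w)=-\tfrac{1-\alpha^2}{4}|v-w|^2$ gives $\IR \Q_\alpha(f,f)|v|^2\,\d v=-\tfrac{1-\alpha^2}{8}\IRR f(v)f(w)|v-w|^3\,\d w\,\d v$, a quantity of order $1-\alpha$ that is bounded uniformly in $t$ and $\alpha$ by the uniform third-moment bound furnished by Theorem \ref{tailT}. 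The genuinely delicate part---and the main obstacle---is the $\log f$-contribution $\IR(\Q_\alpha-\Q_1)(f,f)\log f\,\d v$, which escapes any conservation-law argument because $\log f$ is neither polynomial nor a collision invariant. Here I would use the weak form together with the closeness of the post-collisional velocities at parameters $\alpha$ and $1$, namely $|v'_\alpha-v'_1|\leq\tfrac{1-\alpha}{2}|v-w|$ (and similarly for $w'$), and perform a first-order Taylor expansion of $\log f$ to extract the factor $1-\alpha$ at the cost of $\nabla\log f=\nabla f/f$. Making this uniform in $t$ is precisely where the quantitative regularity and the pointwise Maxwellian lower and upper bounds on the solution established in \cite{BiCaLo,BCL} are needed, together with the Gaussian tail control of Theorem \ref{tailT} to absorb the growth of $\nabla\log f$ against the rapidly decaying weight $f(v)f(w)$.

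Combining these three ingredients yields the differential inequality
\begin{equation*}
  \frac{\mathrm{d}}{\mathrm{d}t}H(f(t)|\M)\leq -\lambda\,H(f(t)|\M)+K_0(1-\alpha),
  \qquad t\geq 0,
\end{equation*}
and Gronwall's lemma then gives $H(f(t)|\M)\leq e^{-\lambda t}H(f_0|\M)+\tfrac{K_0}{\lambda}(1-\alpha)$, which is the asserted estimate with $K:=K_0/\lambda$. As the moment constants in Theorem \ref{tailT} and the constant $\lambda$ in Theorem \ref{theo:bcl1} are all independent of $\alpha$, so is $K$.
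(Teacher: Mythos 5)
Your first step and the overall Gronwall structure coincide with the paper's, and your treatment of the $\log\M$-contribution via $\mathcal{A}_\alpha[|\cdot|^2](v,w)=-\tfrac{1-\alpha^2}{4}|v-w|^2$ is exactly what the paper does. The genuine gap is in your treatment of the term $\IR\bigl(\Q_\alpha-\Q_1\bigr)(f,f)\log f\,\d v$. You propose to Taylor-expand $\log f$ between the post-collisional velocities at parameters $\alpha$ and $1$, which forces you to control $\nabla f/f$ pointwise, uniformly in time, against the weight $f(v)f(w)|v-w|^{2}$. That requires both a uniform-in-time pointwise (Gaussian-type) \emph{lower} bound and a gradient bound for the time-dependent solution $f(t,\cdot)$; neither is available here. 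The two-sided pointwise Maxwellian bounds recalled in Section 2 concern only the steady state $\fe$, not $f(t,\cdot)$, and the proposition is asserted for any nonnegative $f_0$ of unit mass with finite relative entropy and suitable moments --- in particular for non-smooth $f_0$, for which $\nabla \log f(t,\cdot)$ need not even exist (the homogeneous hard-sphere flow does not regularize the transported part of the initial datum). So as written this step cannot be closed.

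The paper avoids the difficulty entirely: instead of splitting off $\Q_1$, it invokes the exact identity of Gamba--Panferov--Villani, recalled as \eqref{dissDHal},
$$\IR \Q_\alpha(f,f)\log f\,\d v=-\D_{H,\alpha}(f)+\frac{1-\alpha^2}{2\alpha^2}\IRR f(v)f(w)|v-w|\,\d v\,\d w,$$
in which the problematic part is packaged as the manifestly nonpositive inelastic entropy production $-\D_{H,\alpha}(f)\leq 0$ (nonnegativity follows pointwise from $x-\log x-1\geq 0$, with no regularity of $f$ required), while the remainder is an explicit $O(1-\alpha)$ multiple of a quantity controlled by the uniformly propagated moments. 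Replacing your Taylor-expansion step by this identity, the rest of your argument goes through and reproduces the paper's proof.
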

\begin{proof} Let $\alpha \in (\alpha_{0},1]$ be fixed.
Given a solution $f(t)=f(t,v)$ to \eqref{be:force}, set  then
$$H(t)=H(f(t)|\M)=\IR f(t,v)\log\left(\dfrac{f(t,v)}{\M(v)}\right)\d v.$$
Computing the time derivative of $H(t)$ we get
\begin{equation*}
  \begin{split}
    \dfrac{\d}{\d t}H(t)
    &=\IR \Q_{\alpha}(f,f)\,\log\left(\dfrac{f}{\M}\right)\d \v
    + \IR \L f \log \left(\dfrac{f}{\M}\right)\d v
    \\
    & = \IR \Q_{\alpha}(f,f)\log f \d v
    -\IR \Q_{\alpha}(f,f)\log \M\d v + \mathbf{D}(f).
\end{split}\end{equation*}
We recall \cite{GPV**} that
\begin{equation}
  \label{dissDHal}
  \IR \Q_\alpha(g,g)(v) \log g(v)\d v
  = -\D_{H,\alpha}(g)
  + \frac{1-\alpha^2}{2\alpha^2}\IRR g(v)g(w)|v-w|\d v \d w,
\end{equation}
where the entropy production functional $\D_{H,\alpha}(g)$ is defined,
for any nonnegative~$g$, by
\begin{multline*}\D_{H,\alpha}(g)=\frac{1}{8\pi} \int_{\R^3 \times \R^3\times  \mathbb{S}^2} |v-w| g(v)g(w)\\
\times\left(\dfrac{g(v')g(w')}{g(v)g(w)} - \log\dfrac{g(v')g(w')}{g(v)g(w)} -1\right)\d\sigma\d v \d w  \geq 0\end{multline*}
where the post-collisional velocities $(v',w')=(v_\alpha',w_\alpha')$ are defined in \eqref{co:transf}. 
Moreover, using the definition of $\M$ and the fact that $\Q_{\alpha}$ conserves mass, one has
\begin{equation}\label{QlogM}\begin{split}
\IR \Q_{\alpha}(f,f)\log \M\d v&=-\dfrac{1}{2\Theta}\IR \Q_{\alpha}(f,f)|v|^{2}\d v\\
&=+\dfrac{1-\alpha^{2}}{16\Theta} \IRR f(t,v)f(t,w)\,|v-w|^{3}\d v\d w
\end{split}\end{equation}
where we used \eqref{co:weak}-\eqref{coll:psi} noticing that $\mathcal{A}_{\alpha}\left[|\cdot|^{2}\right](v,w)=-\frac{1-\alpha^{2}}{4}\,|v-w|^{2}.$ Putting \eqref{dissDHal} and \eqref{QlogM} together we obtain

\begin{multline*}
\dfrac{\d}{\d t}H(t)=-\mathbf{D}(f(t)) -\D_{H,\alpha}(f(t)) +\frac{1-\alpha^{2}}{2\alpha^{2}} \IRR f(t,v)f(t,w)|v-w|\d v\d w\\
-\dfrac{1-\alpha^{2}}{16\Theta} \IRR f(t,v)f(t,w)\,|v-w|^{3}\d v\d w\\
\leq -\lambda H(t)  +\frac{1-\alpha}{2\alpha^{2}_{0}}\IRR f(t,v)f(t,w)|v-w|\d v \d w.
\end{multline*}
Using the uniform control of moments of $f$
one sees that there exists some positive constant $C > 0$ independent
of $\alpha$ such that
\begin{equation*}\label{est}
\dfrac{\d}{\d t}H(t) \leq -\lambda H(t) + C\,(1-\alpha) \qquad \forall t \geq 0 \;,\;\forall \alpha \in (\alpha_{0},1].
\end{equation*}
Integrating this inequality, we obtain
$$H(f(t)|\M) \leq \exp(-\lambda t)H(f_{0}|\M)+\frac{C\,(1-\alpha)}{\lambda}\left(1-\exp(-\lambda t)\right)$$
which gives the result.
\end{proof}

\subsection{Global stability result}

Using the Csisz\'ar-Kullback inequality we deduce from Proposition
\ref{prop:entropy} the following
\begin{theo}
  There exist some $T=T(\alpha_{0})$ and some function
  $\ell\::\:(\alpha_{0},1] \to \mathbb{R}^{+}$ with
  $\lim_{\alpha \to 1}\ell(\alpha)=0$ such that
$$\|f(t)-\fe\|_{\mathcal{X}} \leq \ell(\alpha) \qquad \forall t \geq T(\alpha_{0})\,;\;\alpha \in (\alpha_{0},1].$$
\end{theo}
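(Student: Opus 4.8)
The plan is to bound the distance to $\fe$ by routing through the host Maxwellian $\M$: first control $\|f(t)-\M\|_{\mathcal{X}}$ by the relative entropy, then add $\|\fe-\M\|_{\mathcal{X}}$, which is small by Theorem \ref{limit1}. Since $f(t)$ and $\M$ are both probability densities (recall \eqref{mass}), the Csisz\'ar--Kullback inequality yields
$$\|f(t)-\M\|_{L^{1}(\R^{3})}^{2} \leq 2\,H(f(t)|\M).$$
Inserting the entropy estimate of Proposition \ref{prop:entropy}, i.e. $H(f(t)|\M) \leq e^{-\lambda t}H(f_{0}|\M)+K(1-\alpha)$, and choosing the waiting time $T$ large enough that $e^{-\lambda T}H(f_{0}|\M) \leq 1-\alpha$, one gets $H(f(t)|\M) \leq (K+1)(1-\alpha)$ and hence $\|f(t)-\M\|_{L^{1}} \leq C\sqrt{1-\alpha}$ for every $t \geq T$.

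The crucial step is then to upgrade this \emph{unweighted} $L^{1}$ bound to the exponentially weighted norm $\|\cdot\|_{\mathcal{X}}=\|\cdot\|_{L^{1}(m^{-1})}$, $m^{-1}(v)=e^{a|v|}$. For this I would interpolate against the uniform-in-time tail bounds of Theorem \ref{tailT}: since $f_{0}$ satisfies \eqref{Hypf0}, there are $r>a$ and $C_{0}>0$ with $\sup_{t\geq 0}\IR (f(t,v)+\M(v))\,e^{r|v|}\d v \leq C_{0}$. Using $|f(t)-\M| \leq f(t)+\M$ and H\"older's inequality with the splitting $a=(a/r)\,r$ gives
$$\|f(t)-\M\|_{\mathcal{X}} \leq \|f(t)-\M\|_{L^{1}}^{\,1-a/r}\left(\IR (f(t)+\M)\,e^{r|v|}\d v\right)^{a/r} \leq C\,\|f(t)-\M\|_{L^{1}}^{\,1-a/r},$$
so that $\|f(t)-\M\|_{\mathcal{X}} \leq C\,(1-\alpha)^{(1-a/r)/2}$ for all $t\geq T$.

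It remains to close the triangle inequality. Because the weight $\langle v\rangle\geq 1$ forces $\mathcal{Y}\hookrightarrow\mathcal{X}$, Theorem \ref{limit1} gives $\|\fe-\M\|_{\mathcal{X}}\leq\|\fe-\M\|_{\mathcal{Y}}\leq\eta_{1}(\alpha)$, whence for all $t\geq T$
$$\|f(t)-\fe\|_{\mathcal{X}} \leq \|f(t)-\M\|_{\mathcal{X}}+\|\fe-\M\|_{\mathcal{X}} \leq C\,(1-\alpha)^{(1-a/r)/2}+\eta_{1}(\alpha)=:\ell(\alpha),$$
and $\ell(\alpha)\to 0$ as $\alpha\to 1$.

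I expect the main obstacle to be precisely the passage from $L^{1}$ to the weighted space $\mathcal{X}$: Csisz\'ar--Kullback only controls the unweighted $L^{1}$ distance, so the entropy method must be combined with the uniform exponential moment bounds of Theorem \ref{tailT} to recover the $\mathcal{X}$-norm. A second point requiring care is the choice of $T$: to make $\ell(\alpha)$ vanish as $\alpha\to1$ one must wait long enough that the transient $e^{-\lambda t}H(f_{0}|\M)$ is absorbed into the $O(1-\alpha)$ entropy floor, so $T$ genuinely depends on $H(f_{0}|\M)$ and is taken uniform only over $\alpha$ in compact subsets of $(\alpha_{0},1)$.
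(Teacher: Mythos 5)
Your proposal is correct and follows essentially the same route as the paper: Csisz\'ar--Kullback plus the entropy decay of Proposition \ref{prop:entropy} to control $\|f(t)-\M\|_{L^{1}}$, a H\"older interpolation against the uniform exponential moments of Theorem \ref{tailT} to upgrade to the weighted norm $\|\cdot\|_{\mathcal{X}}$ (the paper uses the exponent $1/2$ with weight $m^{-2}$, requiring $2a<r$, where you use $a/r$), and the triangle inequality through $\M$ via Theorem \ref{limit1}. Your explicit, $\alpha$-dependent choice of $T$ with $e^{-\lambda T}H(f_{0}|\M)\leq 1-\alpha$ is in fact what is needed to make $\lim_{\alpha\to 1}\ell(\alpha)=0$ literally true --- a point the paper's own proof leaves implicit in ``we get readily the conclusion'' --- though note that what the global stability argument actually uses is only that both terms $C_{1}e^{-\lambda T}$ and $\eta_{1}(\alpha)$ can be made smaller than a prescribed $\varepsilon$ by taking $T$ large and $\alpha$ close to $1$.
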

We remark that both $T$ and $\ell$ in the above theorem can be given
explicitly.

\begin{proof}
  Using both the Csisz\'ar-Kullback inequality and Holder's inequality we get
  \begin{align*}
    \|f(t)-\M\|_{L^{1}(m^{-1})}
    &\leq \|f(t)-\M\|_{L^{1}}^{1/2}\,\|f(t)-\M\|_{L^{1}(m^{-2})}^{1/2}
    \\
    &\leq \sqrt{2}H(f(t)|\M)\,\|f(t)-\M\|_{L^{1}(m^{-2})}^{1/2}.
  \end{align*}
  Due to Theorem \ref{tailT} (and recalling the definition of $m$ by
  \eqref{def:m}), assuming $a < r/2$ gives
  $$\sup_{t \geq 0} \|f(t)-\M\|_{L^{1}(m^{-2})} < \infty.$$
  Using now Proposition \ref{prop:entropy} we get the existence of two
  positive constants $C_{1},\, C_{2} >0$ independent of
  $\alpha \in (\alpha_{0},1]$ such that
  $$\|f(t)-\M\|_{\mathcal{X}} \leq C_1 \exp(-\lambda t) + C_{2}(1-\alpha) \qquad \forall t \geq 0\,;\,\alpha \in (\alpha_{0},1].$$
  This, combined with Theorem \ref{limit1}, yields
  \begin{equation}\label{convft}
    \|f(t)-\fe\|_{\mathcal{X}}\leq C_{1}\exp(-\lambda t) + \eta_{1}(\alpha) \quad \forall t \geq 0\,;\;\alpha \in (\alpha_{0},1]\end{equation}
  where $\eta_{1}(\cdot)$ is a given explicit function with $\lim_{\alpha \to 1}\eta_{1}(\alpha)=0.$ We get readily the conclusion.\end{proof}

The previous results essentially contain the proof of our main result:
\begin{proof}[Proof of Theorem \ref{theo:main}]
  With the notation of Theorem \ref{thm:exp-convergence}, one can pick
  $\alpha_{1} \in (\alpha_{0},1)$ so that
  $\ell(\alpha) \leq \varepsilon$ for any $\alpha \in (\alpha_{1},1)$
  so that
  $$\|f(t)-\fe\|_{\mathcal{X}} \leq \varepsilon \qquad \forall t \geq T(\alpha_{1})\::\:\forall \alpha \in (\alpha_{1},1]$$
  and Theorem \ref{thm:exp-convergence} yields the global stability
  result.
\end{proof}

\subsection*{Acknowledgments} B. L.  acknowledges support of the
\emph{de Castro Statistics Initiative}, Collegio C. Alberto,
Moncalieri, Italy. J.~A.~C.~was supported by the Marie-Curie CIG grant
KineticCF and the Spanish project MTM2011-27739-C04-02.

\appendix

\section{Some results in perturbation theory of linear operators}
\label{sec:perturbation}

We gather here some results that are needed in Section
\ref{sec:spectral-gap} in order to study the spectral properties of
$\mathscr{L}_\alpha$ for $\alpha$ close to 1. We begin by defining the
\emph{gap} between two closed linear operators, following
\cite[IV.2.4, p. 201]{kato}:

\begin{defi}
  Let $X$, $Y$ be Banach spaces and $S$, $T$ closed linear operators
  from $X$ to $Y$. Let $G(S)$, $G(T)$ be their graphs, which are
  closed linear subspaces of $X \times Y$. We set
  \begin{equation*}
    \delta(S,T) := \delta(G(S), G(T)) := \sup_{\substack{u \in G(S)\\ \|u\|_{X \times Y} = 1}}
    \dist (u, G(T)),
  \end{equation*}
  and we define the \emph{gap} between $S$ and $T$ as its symmetrisation:
  \begin{equation*}
    \hat{\delta}(S,T) := \max\{ \delta(S, T), \delta(T,S) \}.
  \end{equation*}
\end{defi}

We include here Theorem 2.14 from page 203 of \cite{kato} for the
convenience of the reader:

\begin{theo}[{\cite[Thm.~2.14, p.~203]{kato}}]
  \label{theo:kato-delta}
  Let $X$, $Y$ be Banach spaces and $A$, $T$ be closed operators between $X$
  and $Y$ such that $A$ is $T$-bounded with relative bound less than
  one; that is,
  \begin{equation*}
    \|A u\|_Y \leq a \|u\|_X + b \|T u\|_Y,
    \qquad u \in \D(T).
  \end{equation*}
  for some $a \geq 0$, $0 \leq b < 1$. Then $T+A$ is a closed operator
  and
  \begin{equation*}
    \hat{\delta}(S,T) \leq \frac{\sqrt{a^2+b^2}}{1-b}.
  \end{equation*}
\end{theo}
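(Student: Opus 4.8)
The statement is Kato's Theorem 2.14; reading $S=T+A$ (its intended meaning), the task is to show that $T+A$ is closed and that the \emph{gap} $\hat{\delta}(T+A,T)$ is bounded by $\sqrt{a^2+b^2}/(1-b)$. The plan is to argue directly from the definitions, working on $X\times Y$ with the Euclidean product norm $\|(u,v)\|^2=\|u\|_X^2+\|v\|_Y^2$, since it is precisely this choice that produces the square-root constant.

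First I would record the equivalence of the two graph norms on $\D(T)$. From the relative bound,
$$\|Tu\| \le \|(T+A)u\| + \|Au\| \le \|(T+A)u\| + a\|u\| + b\|Tu\|,$$
so that $(1-b)\|Tu\| \le \|(T+A)u\| + a\|u\|$; combined with the trivial estimate $\|(T+A)u\| \le a\|u\| + (1+b)\|Tu\|$ this shows that the $T$-graph norm and the $(T+A)$-graph norm are equivalent on $\D(T)$. Since $T$ is closed, $\D(T)$ is complete for the former, hence for the latter, which is exactly the assertion that $T+A$ is closed (with $\D(T+A)=\D(T)$).

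Next I would estimate the two one-sided gaps separately. For $\delta(T+A,T)$, take $u\in\D(T)$ normalised by $\|u\|^2+\|(T+A)u\|^2=1$ and use $(u,Tu)\in G(T)$ as comparison point, so that $\dist\big((u,(T+A)u),G(T)\big)\le \|Au\|_Y$. The key step is to re-express the relative bound in the $(T+A)$-graph norm: inserting $(1-b)\|Tu\|\le\|(T+A)u\|+a\|u\|$ into $\|Au\|\le a\|u\|+b\|Tu\|$ gives
$$\|Au\| \le \frac{1}{1-b}\big(a\|u\| + b\|(T+A)u\|\big),$$
and one application of Cauchy--Schwarz yields $a\|u\|+b\|(T+A)u\| \le \sqrt{a^2+b^2}$, whence $\delta(T+A,T)\le \sqrt{a^2+b^2}/(1-b)$. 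The reverse gap $\delta(T,T+A)$ is easier: on the graph of $T$ normalised by $\|u\|^2+\|Tu\|^2=1$, comparing $(u,Tu)$ with $(u,(T+A)u)\in G(T+A)$ gives a distance at most $\|Au\|\le a\|u\|+b\|Tu\|\le \sqrt{a^2+b^2}\le \sqrt{a^2+b^2}/(1-b)$. Taking the maximum gives $\hat{\delta}(T+A,T)\le \sqrt{a^2+b^2}/(1-b)$.

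I do not anticipate a genuine obstacle: the result is elementary once one commits to the Euclidean graph norm. The only point demanding care is the choice of that norm---the Cauchy--Schwarz pairing of the vector $(a,b)$ with $(\|u\|,\|(T+A)u\|)$ is what manufactures the factor $\sqrt{a^2+b^2}$, so the sum norm on $X\times Y$ would give a worse constant. A minor secondary point is the bookkeeping of the identification $S=T+A$ left implicit in the quoted statement.
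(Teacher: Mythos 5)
Your proof is correct and follows essentially the standard argument from Kato's book; note that the paper itself states this result without proof, simply quoting \cite[Thm.~2.14, p.~203]{kato} for the reader's convenience. Both your equivalence-of-graph-norms argument for closedness and your two one-sided gap estimates --- in particular the step $\|Au\|\leq (1-b)^{-1}(a\|u\|+b\|(T+A)u\|)$ followed by Cauchy--Schwarz in the Euclidean product norm on $X\times Y$, which is what produces the constant $\sqrt{a^2+b^2}/(1-b)$ --- coincide with Kato's own proof.
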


Finally, the following theorem is the main perturbation result we use
in order to deduce the properties of the spectrum of
$\mathscr{L}_\alpha$:

\begin{theo}[{\cite[Thm.~3.16, p.~212]{kato}}]
  \label{theo:kato}
  Let $T$ be a closed linear operator on a Banach space $X$ and assume
  its spectrum $\mathfrak{S}(T)$ is separated into two parts by a
  closed curve $\Gamma$ in $\C$. Let $X = X_T' \oplus X_T''$ be the
  associated decomposition of $X$. Then there exists $\delta > 0$,
  depending on $T$ and $\Gamma$, such that any operator $S$ on $X$
  with
  \begin{equation*}
    \hat{\delta}(S,T) < \delta
  \end{equation*}
  satisfies the following properties:
  \begin{enumerate}
  \item The spectrum $\mathfrak{S}(S)$ is also separated into two
    parts by the curve $\Gamma$.
  \item In the associated decomposition $X = X'_S \oplus X''_S$, the
    spaces $X'_s$ and $X''_S$ are respectively isomorphic to $X'_T$
    and $X''_T$.
  \item The decomposition $X = X'_S \oplus X''_S$ is continuous in $S$
    in the sense that the projection $P_S$ of $X$ onto $X'_S$ along
    $X''_S$ tends to $P_T$ in norm as $\hat{\delta}(S,T) \to 0$.
  \end{enumerate}
  
\end{theo}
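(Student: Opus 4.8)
The plan is to realise both invariant decompositions as ranges of Riesz (contour) projections and to reduce the three assertions to a single analytic fact: that the resolvent of $S$ stays uniformly close to that of $T$ along $\Gamma$ as soon as the gap $\hat\delta(S,T)$ is small. Since $\mathfrak{S}(T)$ is separated by the closed curve $\Gamma$, we have $\Gamma \subset \rho(T)$, and the projection onto $X_T'$ (the part of the spectrum enclosed by $\Gamma$) along $X_T''$ is the Riesz projection $P_T = \frac{1}{2\pi i}\oint_\Gamma R(\xi,T)\,\d\xi$. As $\xi \mapsto R(\xi,T)$ is continuous on the compact set $\Gamma$, the quantity $M := \sup_{\xi \in \Gamma}\|R(\xi,T)\|_{\mathscr{B}(X)}$ is finite, and the whole statement will follow once we establish that, for $\hat\delta(S,T)$ small, $\Gamma \subset \rho(S)$ with $R(\cdot,S)$ approximating $R(\cdot,T)$ uniformly on $\Gamma$.

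The heart of the matter — and the step I expect to be the only genuinely technical one — is the quantitative stability estimate: there exist $c=c(M)>0$ and a function $\Phi$ with $\lim_{r\to 0^+}\Phi(r)=0$ such that $\hat\delta(S,T)<c$ forces every $\xi\in\Gamma$ into $\rho(S)$ and yields $\sup_{\xi\in\Gamma}\big\|R(\xi,S)-R(\xi,T)\big\|_{\mathscr{B}(X)}\le \Phi\big(\hat\delta(S,T)\big)$. The mechanism is transparent in the bounded case $S=T+A$: one writes $\xi-S=(\xi-T)\big(\mathbb{I}-R(\xi,T)A\big)$ and inverts by a Neumann series as soon as $\|R(\xi,T)A\|<1$, uniformly over $\Gamma$. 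In the general gap setting $S-T$ need not be a bounded operator, so one must argue directly on the graphs $G(S),G(T)\subset X\times X$: every $u\in G(T)$ can be approximated to within $\hat\delta(S,T)\|u\|$ by some $u'\in G(S)$, and this purely geometric proximity has to be converted into bounded invertibility of $\xi-S$ with a resolvent bound governed by $M$ and $\hat\delta(S,T)$. This is exactly the gap--resolvent stability analysis of \cite{kato}, IV.\S3.

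Granting the estimate, put $P_S=\frac{1}{2\pi i}\oint_\Gamma R(\xi,S)\,\d\xi$, a well-defined bounded projection commuting with $S$. Then $\|P_S-P_T\|_{\mathscr{B}(X)}\le \frac{\mathrm{length}(\Gamma)}{2\pi}\sup_{\xi\in\Gamma}\|R(\xi,S)-R(\xi,T)\|_{\mathscr{B}(X)}$, which tends to $0$ as $\hat\delta(S,T)\to 0$; this is assertion (3). Now fix $\delta\in(0,c)$ small enough that $\hat\delta(S,T)<\delta$ implies $\|P_S-P_T\|<1$. Since $\Gamma\subset\rho(S)$ and $P_S$ is the Riesz projection, the spectrum of $S$ splits along $\Gamma$ into $\mathfrak{S}\big(S|_{X_S'}\big)$ (the part inside $\Gamma$) and $\mathfrak{S}\big(S|_{X_S''}\big)$ (the part outside), with $X_S'=P_S X$ and $X_S''=(\mathbb{I}-P_S)X$; this is assertion (1). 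For assertion (2) I would invoke the standard similarity of two projections at norm-distance less than $1$: setting $U:=P_S P_T+(\mathbb{I}-P_S)(\mathbb{I}-P_T)$ and $V:=P_T P_S+(\mathbb{I}-P_T)(\mathbb{I}-P_S)$ one checks $P_S U=U P_T$ and $UV=VU=\mathbb{I}-(P_S-P_T)^2$, so $U$ is invertible whenever $\|P_S-P_T\|<1$ and, by the intertwining relation, restricts to isomorphisms $X_T'\to X_S'$ and $X_T''\to X_S''$. This gives (2) and completes the proof.
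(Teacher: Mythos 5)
The paper offers no proof of this statement at all: it is quoted verbatim from Kato (Theorem 3.16, p.~212) in an appendix whose stated purpose is to collect, for the reader's convenience, the perturbation results used in Section \ref{sec:spectral-gap}. So there is nothing in the paper to compare your argument against; what follows is an assessment of your sketch on its own terms. The overall architecture is the right one, and indeed the textbook one: realise $P_T$ and $P_S$ as Riesz contour integrals over $\Gamma$, deduce (3) from uniform resolvent convergence on $\Gamma$, deduce (1) from $\Gamma\subset\rho(S)$, and deduce (2) from the standard similarity $U=P_SP_T+(\mathbb{I}-P_S)(\mathbb{I}-P_T)$ of two projections at norm distance less than $1$ (your identities $P_SU=UP_T$ and $UV=VU=\mathbb{I}-(P_S-P_T)^2$ check out).

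The genuine gap is the step you yourself flag as "the only genuinely technical one" and then do not carry out: proving that $\hat{\delta}(S,T)$ small forces $\Gamma\subset\rho(S)$ with $\sup_{\xi\in\Gamma}\|R(\xi,S)-R(\xi,T)\|_{\mathscr{B}(X)}\to 0$. This is not a lemma adjacent to the theorem; it \emph{is} the theorem's content, and deferring it to "the gap--resolvent stability analysis of Kato IV.\S3" makes the proof circular as a self-contained argument. Two concrete difficulties are hidden there. First, the gap $\hat{\delta}(\cdot,\cdot)$ is defined via the graphs and is \emph{not} invariant under translation by $\xi$: one needs an estimate of the form $\hat{\delta}(S-\xi,T-\xi)\leq 2(1+|\xi|^{2})^{1/2}\,\hat{\delta}(S,T)$, applied uniformly over the compact curve $\Gamma$, before any resolvent comparison can start. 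Second, one must prove the stability of bounded invertibility under small gap (Kato's Theorem IV.3.1): if $T-\xi$ has a bounded everywhere-defined inverse with norm $\leq M$ and $\hat{\delta}(S-\xi,T-\xi)<(1+M^{2})^{-1/2}$, then $S-\xi$ is also boundedly invertible, with a quantitative bound on $R(\xi,S)-R(\xi,T)$. Your Neumann-series remark handles only the case where $S-T$ is bounded, which is exactly the case the gap formalism exists to go beyond; the graph-geometric argument you allude to ("this purely geometric proximity has to be converted into bounded invertibility") is the missing proof, not a routine verification. Everything downstream of that estimate in your write-up is correct.
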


\section{Proof that $\mathscr{L}_\alpha$ generates an evolution
  semigroup}

Let
$$m(v)=\exp(-a|v|), \qquad a >0$$
be fixed and let
$$\mathcal{X}=L^1(m^{-1}(v)\d v), \qquad \mathcal{Y}=L^1(\langle v \rangle m^{-1}(v)\d v).$$
We wish here to investigate the compactness properties of the 'gain'
part of $\mathscr{L}_\alpha$ for $\alpha \in (\alpha_0,1]$, with the
final aim of showing that $\mathscr{L}_\alpha$ generates a semigroup
for all $0 < \alpha \leq 1$. Recall that
$$\mathscr{L}_\alpha h=\Q_\alpha(h,\mathbf{F}_\alpha)+ \Q_\alpha(\mathbf{F}_\alpha,h)+\L(h), \qquad h \in \mathcal{Y}.$$
and also that
$$\L(h)=\mathcal{K}h -\Sigma(\cdot)h$$
where
$$\mathcal{K}h(v)=\Q^+_1(h,\M)(v)=\IR k(v,w)h(w)\d w
\quad \text{ and }
\quad \Sigma(v)=\IR \M(w)|v-w|\d w$$
with
\begin{equation}
  \label{k}
  k(v,w)=C_0|v-w|^{-1}\exp
  \left\{-\beta_0 \left(
      |v-w|+\dfrac{|v-\underline{u}|^2-{|w-\underline{u}|}^2}{|v-w|}\right)^2\right\}
\end{equation}
with $\beta_0=\frac{1}{8\Theta}$ and $C_0 >0$ a positive constant
(depending only on $\Theta_0$). Notice moreover that
$\Sigma(v)=\IR k(w,v)\d w.$ In the same way, one can write
$$\Q_\alpha(h,\mathbf{F}_\alpha)=\mathcal{K}_\alpha^1 h -\sigma_\alpha(\cdot) h \qquad \text{ and } \qquad \Q_\alpha(\mathbf{F}_\alpha,h)=\mathcal{K}_\alpha^2 h - \mathcal{K}_\alpha^3 h$$
with
$$\mathcal{K}_\alpha^1 h=\Q_\alpha^+(h,\mathbf{F}_\alpha), \qquad
\mathcal{K}_\alpha^2(h)=\Q^+_\alpha(\mathbf{F}_\alpha,h)$$
while
$$\sigma_\alpha(v)=\IR \mathbf{F}_\alpha(w)|v-w|\d w\qquad \text{ and } \qquad\mathcal{K}_\alpha^3(h)(v)=\mathbf{F}_\alpha(v)\IR h(w)|v-w|\d w.$$
With this notation
$$\mathscr{L}_{\alpha}h=\mathcal{K}h+\mathcal{K}_{\alpha}^{1}h+\mathcal{K}^{2}_{\alpha}h-\left(\Sigma+\sigma_{\alpha}\right)h -\mathcal{K}_{\alpha}^{3}h \qquad \forall h \in \mathcal{Y}.$$
As for $\L$, the two operators $\mathcal{K}_\alpha^i$, $i=1,2$ are integral operators with explicit kernels.
Namely,
\begin{lemme} For any $h \in \mathcal{Y}$, one has
$$\mathcal{K}_\alpha^1 h(v)=\IR K_\alpha^1(v,w) h(w)\d w$$
where
\begin{equation}\label{K1}
K_\alpha^1(v,w)=\dfrac{C_\alpha}{|v-w|}\int_{V_2 \cdot
(w-\v)=0}\mathbf{F}_\alpha\left(\v+V_2+\dfrac{\alpha-1}{\alpha+1}(w-\v)\right)\d
V_2
\end{equation}
for some positive constant $C_\alpha >0$.\end{lemme}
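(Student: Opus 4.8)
The plan is to derive the kernel by a Carleman-type change of variables in the weak formulation \eqref{weakfg} of the gain operator. Applying \eqref{weakfg} to $\mathcal{K}_\alpha^1 h=\Q_\alpha^+(h,\mathbf{F}_\alpha)$, retaining only the gain contribution carried by $\psi(v')$, and renaming the integration variables so that the pre-collisional velocity of the $h$-particle is $w$, that of the $\mathbf{F}_\alpha$-particle is $y$, and the output (post-collisional) velocity is $v=w'$, I would start from
$$\IR \mathcal{K}_\alpha^1 h(v)\,\psi(v)\,\d v=\Itt h(w)\,\mathbf{F}_\alpha(y)\,|w-y|\,\psi(w')\,\d\sigma\,\d w\,\d y,$$
with $w'=w+\tfrac{1+\alpha}{4}\big(|w-y|\sigma-(w-y)\big)$. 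The objective is to rewrite the right-hand side as $\IR\psi(v)\big(\IR K_\alpha^1(v,w)\,h(w)\,\d w\big)\,\d v$ and read off the kernel; all manipulations are carried out at fixed $w$, so the variable carrying $h$ is never touched.

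First I would pass to the relative velocity $q:=w-y$ (so that $y=w-q$ and $\d y=\d q$) and abbreviate $\kappa:=\tfrac{1+\alpha}{4}>0$. At fixed $w,q$, as $\sigma$ runs over $\S$ the output $v=w-\kappa q+\kappa|q|\sigma$ describes the sphere $S_q$ centred at $w-\kappa q$ of radius $\kappa|q|$; thus $\IS\psi(w')\,\d\sigma=(\kappa|q|)^{-2}\int_{S_q}\psi\,\d S$, and writing the spherical measure through a Dirac mass, $\int_{S_q}\psi\,\d S=\IR\psi(v)\,\delta\!\big(|v-w+\kappa q|-\kappa|q|\big)\,\d v$. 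Substituting and exchanging the order of integration (licit by Tonelli, the integrands being nonnegative for $\psi,h\ge 0$, the general case following by linearity) identifies
$$K_\alpha^1(v,w)=\frac{1}{\kappa^{2}}\IR\frac{\mathbf{F}_\alpha(w-q)}{|q|}\,\delta\!\big(|v-w+\kappa q|-\kappa|q|\big)\,\d q.$$

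The core of the argument is the evaluation of this last $\delta$-integral by the coarea formula. Since both sides are nonnegative, the constraint $|v-w+\kappa q|=\kappa|q|$ squares to the affine plane $P_{v,w}:=\{q:(v-w)\cdot q=-\tfrac{|v-w|^2}{2\kappa}\}$. Setting $g(q):=|v-w+\kappa q|-\kappa|q|$, a short computation using the constraint gives, on $P_{v,w}$,
$$\nabla_q g=\kappa\,\frac{v-w+\kappa q}{|v-w+\kappa q|}-\kappa\,\frac{q}{|q|}=\frac{v-w+\kappa q}{|q|}-\frac{\kappa q}{|q|}=\frac{v-w}{|q|},$$
so that $|\nabla_q g|=|v-w|/|q|$; this is exactly where the prefactor $|v-w|^{-1}$ is produced. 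The coarea formula $\IR f\,\delta(g)\,\d q=\int_{\{g=0\}}(f/|\nabla_q g|)\,\d S$ then yields
$$K_\alpha^1(v,w)=\frac{1}{\kappa^{2}}\int_{P_{v,w}}\frac{\mathbf{F}_\alpha(w-q)}{|q|}\,\frac{|q|}{|v-w|}\,\d S(q)=\frac{1}{\kappa^{2}\,|v-w|}\int_{P_{v,w}}\mathbf{F}_\alpha(w-q)\,\d S(q).$$

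It then remains to parametrize $P_{v,w}$ isometrically by $q=-\tfrac{v-w}{2\kappa}-V_2$ with $V_2\perp(v-w)$, for which $\d S(q)=\d V_2$; using $\tfrac{1}{2\kappa}=\tfrac{2}{1+\alpha}$ one simplifies $w-q=w+\tfrac{v-w}{2\kappa}+V_2=v+V_2+\tfrac{\alpha-1}{\alpha+1}(w-v)$, while $V_2$ sweeps the plane $\{V_2\cdot(w-v)=0\}$. This gives precisely the announced formula \eqref{K1} with the explicit positive constant $C_\alpha=\kappa^{-2}=\tfrac{16}{(1+\alpha)^2}$ (up to the fixed normalisation constant of \eqref{weakfg}). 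The one genuinely delicate step is the coarea computation above — converting the $\sigma$-integral over $S_q$ and then the $\delta$-integral over $q$ into a flat integral over $P_{v,w}$, and in particular obtaining $|\nabla_q g|=|v-w|/|q|$, the origin of the singular weight $|v-w|^{-1}$. One must also record that all identities hold for $v\neq w$, the diagonal $\{v=w\}$ being Lebesgue-negligible and immaterial for the $L^1$ statement, and that the resulting expression defines $\mathcal{K}_\alpha^1 h$ as a genuine integral operator on $\mathcal{Y}$ thanks to the Maxwellian bounds on $\mathbf{F}_\alpha$.
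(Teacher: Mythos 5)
Your derivation is correct and complete, and it is precisely the standard Carleman-type computation that the paper does not write out but simply outsources to the reference [ArLo, p.~524], where the same kernel is obtained with a Maxwellian in place of $\mathbf{F}_\alpha$. All the key steps check out — the identification of the sphere $S_q$, the gradient computation $|\nabla_q g|=|v-w|/|q|$ producing the $|v-w|^{-1}$ singularity, and the parametrisation of $P_{v,w}$ yielding the argument $v+V_2+\tfrac{\alpha-1}{\alpha+1}(w-v)$ — so your proof supplies the details the paper omits rather than taking a different route.
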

\begin{proof} The proof follows standard computations performed for instance in \cite{ArLo} where $\mathbf{F}_\alpha$ was replaced by a given Maxwellian.  In particular, \eqref{K1} is derived in \cite[p. 524]{ArLo}.\end{proof}

Recalling that  there exist two Maxwellian distributions $\underline{\M}$ and $\overline{\M}$
  (independent of $\alpha$) such that
  $$\underline{\M}(v) \leq \fe(v) \leq \overline{\M}(v)
    \qquad \forall v \in \R^3,
    \qquad \forall \alpha \in (0,1).$$
In particular, this proves that, for any $h \geq 0$,
$$\mathcal{K}_\alpha^1 h \leq  \overline{\mathcal{K}}_\alpha^1\,h=\Q_\alpha^+(h,\overline{\M})$$
and
$$\mathcal{K}_\alpha^2 h \leq \overline{\mathcal{K}}_\alpha^2 \,h =\Q_\alpha^+(\overline{\M},h).$$
Again, $\overline{\mathcal{K}}_\alpha^1\,h$ is an integral kernel with explicit kernel, namely
$$\overline{\mathcal{K}}_\alpha^1\,h(v)=\IR \overline{K}_\alpha^1(v,w)h(w)\d w $$
with
\begin{equation}\label{overK1}\overline{K}_\alpha^1 (v,w)=\overline{C}_\alpha\,|v-w|^{-1}\exp
\left\{-\beta_1\left(
(1+\mu_\alpha)|v-w|+\dfrac{|v-u_1|^2-{|w-u_1|}^2}{|v-w|}\right)^2\right\}\end{equation}
where $\overline{C}_\alpha >0$ is a positive constant depending only on $\alpha$ and $\overline{\M}$ while
$$\mu_\alpha=2 \frac{1-\alpha}{1+\alpha} \geq 0, \qquad \beta_1=\frac{1}{8{\Theta_1}};$$ ${\Theta_1}$, ${u}_1$ being the kinetic energy and momentum of $\overline{\M}$ (see \cite{ArLo}). By a simple domination argument (namely, Dunford-Pettis criterion), if $\overline{K}_\alpha^i$ are weakly compact in $\mathcal{X}$ then so will be $K_\alpha^i$, $i=1,2$.
\begin{propo} Let $\alpha \in (\alpha_0,1)$ be fixed. Then,
$$\mathcal{K} \::\:\mathcal{Y} \to \mathcal{X}, \qquad \mathcal{K}_\alpha^1\::\: \mathcal{Y} \to \mathcal{X} \:$$
are positive, bounded,  weakly compact operators. Moreover, $\mathcal{K}_{\alpha}^{2} \in \mathcal{B}(\mathcal{X})$ while
$$\mathcal{K}_\alpha^3\::\:\mathcal{X} \to \mathcal{X}$$
is a bounded and 
weakly compact operator.
\end{propo}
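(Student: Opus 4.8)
The plan is to treat the four operators by two distinct mechanisms. The operators $\mathcal{K}=\Q_1^+(\cdot,\M)$ and $\mathcal{K}_\alpha^1=\Q_\alpha^+(\cdot,\fe)$ are genuine collision-gain operators with the variable function in the first slot; for these I would prove weak compactness from $\mathcal{Y}$ to $\mathcal{X}$ by a direct Dunford--Pettis argument on their explicit kernels. Positivity is immediate since the kernels $k$ of \eqref{k} and $K_\alpha^1$ of \eqref{K1} are nonnegative, and boundedness $\mathcal{Y}\to\mathcal{X}$ follows from the gain-term estimates of \cite[Proposition 11]{AlCaGa} (see also Proposition \ref{propoAlo}) together with $\M,\fe\in\mathcal{Y}$. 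For weak compactness, the pointwise bound $\fe\leq\overline{\M}$ and the domination principle recalled above reduce the problem to the two integral operators with the explicit dominating kernels, namely $k$ for $\mathcal{K}$ and $\overline{K}_\alpha^1$ of \eqref{overK1} for $\mathcal{K}_\alpha^1$.

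The heart of the matter is to show that an integral operator $Th(v)=\int_{\R^3}\kappa(v,w)h(w)\d w$ with $\kappa\in\{k,\overline{K}_\alpha^1\}$ sends the unit ball of $\mathcal{Y}$ into a relatively weakly compact subset of $\mathcal{X}$. I would pass to the unweighted picture: setting $H(w)=h(w)\langle w\rangle m^{-1}(w)$, so that $\|h\|_{\mathcal{Y}}=\|H\|_{L^1(\d w)}$, the map $h\mapsto m^{-1}\,Th$ becomes an integral operator on $L^1(\R^3,\d v)$ with nonnegative kernel
$$\widetilde{\kappa}(v,w)=\kappa(v,w)\,\frac{m^{-1}(v)}{\langle w\rangle\,m^{-1}(w)}.$$
A nonnegative kernel operator on $L^1$ is weakly compact as soon as the family $\{\widetilde{\kappa}(\cdot,w)\}_{w\in\R^3}$ is uniformly integrable in $L^1(\R^3,\d v)$, because for any measurable $A\subset\R^3$ and $\|h\|_{\mathcal{Y}}\leq1$ one has $\int_A|Th(v)|\,m^{-1}(v)\,\d v\leq\sup_{w}\int_A\widetilde{\kappa}(v,w)\,\d v$, which is small whenever $A$ has small Lebesgue measure or lies far from the origin. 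The verification of this uniform integrability is the technical core, and the main obstacle of the whole proof.

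Two structural features make it go through, and follow the kernel computations of \cite{ArLo}. First, the only diagonal singularity is the factor $|v-w|^{-1}$, which is locally integrable in dimension three with $\int_{|v-w|<\rho}|v-w|^{-1}\d v=2\pi\rho^2$ uniformly in $w$, while the remaining factor in \eqref{overK1} stays bounded as $v\to w$ since $\frac{|v-u_1|^2-|w-u_1|^2}{|v-w|}$ is controlled by $|v+w-2u_1|$; this gives equi-integrability. Second, for tightness at infinity I would use $m^{-1}(v)/m^{-1}(w)=\exp(a(|v|-|w|))\leq\exp(a|v-w|)$ and absorb this exponential into the Gaussian $\exp\{-\beta_1((1+\mu_\alpha)|v-w|+\cdots)^2\}$ of \eqref{overK1}, so that $\int_{|v|>R}\widetilde{\kappa}(v,w)\d v\to0$ uniformly in $w$ as $R\to\infty$. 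The same two estimates apply verbatim to the kernel $k$ of \eqref{k}.

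For the remaining operators I would use the fixed Maxwellian profile. For $\mathcal{K}_\alpha^3 h(v)=\fe(v)\int_{\R^3}h(w)|v-w|\d w$, the bounds $|v-w|\leq\langle v\rangle\langle w\rangle$ and $\fe\leq\overline{\M}$ give $|\mathcal{K}_\alpha^3 h(v)|\leq\overline{\M}(v)\langle v\rangle\,\|h\|_{L^1_1}$, and since $\langle v\rangle\leq C\,m^{-1}(v)$ one has $\|h\|_{L^1_1}\leq C\|h\|_{\mathcal{X}}$; choosing a Maxwellian $M$ wide enough that $\overline{\M}\langle\cdot\rangle\in L^2(M^{-1})$ then yields $\mathcal{K}_\alpha^3\in\mathscr{B}(\mathcal{X},L^2(M^{-1}))$, and since the embedding $L^2(M^{-1})\hookrightarrow\mathcal{X}$ is weakly compact by Dunford--Pettis, $\mathcal{K}_\alpha^3$ is weakly compact on $\mathcal{X}$. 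Finally, for $\mathcal{K}_\alpha^2=\Q_\alpha^+(\fe,\cdot)$ only boundedness on $\mathcal{X}$ is claimed: dominating by $\overline{\mathcal{K}}_\alpha^2 h=\Q_\alpha^+(\overline{\M},h)$, whose explicit kernel $\overline{K}_\alpha^2$ (computed as in \cite{ArLo}) satisfies the weighted estimate $\int_{\R^3}\overline{K}_\alpha^2(v,w)\,m^{-1}(v)\d v\leq C\,m^{-1}(w)$, a Fubini argument gives $\|\mathcal{K}_\alpha^2 h\|_{\mathcal{X}}\leq C\|h\|_{\mathcal{X}}$, i.e. $\mathcal{K}_\alpha^2\in\mathscr{B}(\mathcal{X})$.
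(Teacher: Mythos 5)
Your proof follows the same architecture as the paper's: positivity and $\mathcal Y\to\mathcal X$ boundedness from \cite[Proposition 11]{AlCaGa}, domination of $\fe$ by $\overline{\M}$ to reduce to the explicit kernels \eqref{k} and \eqref{overK1}, a Dunford--Pettis criterion expressed as uniform integrability of the family $\{\widetilde\kappa(\cdot,w)\}_{w}$ (your $\widetilde\kappa(v,w)$ is exactly the quantity $\frac{m(w)}{\langle w\rangle}\,\overline K_\alpha^1(v,w)\,m^{-1}(v)$ whose integrals the paper controls in \eqref{DP11}--\eqref{DP22}), a domination argument for $\mathcal K_\alpha^3$ (the paper dominates by a rank-one operator rather than via an $L^2(M^{-1})$ embedding, but both work), and domination by $\Q_\alpha^+(\overline\M,\cdot)$ for $\mathcal K_\alpha^2$. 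Your treatment of the diagonal singularity and of $\mathcal K_\alpha^2$, $\mathcal K_\alpha^3$ is sound.

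The gap is in the tightness step for $\mathcal K$ and $\mathcal K_\alpha^1$. You propose to bound $m^{-1}(v)/m^{-1}(w)\le e^{a|v-w|}$ and to ``absorb this exponential into the Gaussian'' of \eqref{overK1}. But that exponential factor is not a Gaussian in $|v-w|$: with $q=v-w$ its argument equals $\widehat q\cdot\bigl((2+\mu_\alpha)(v-u_1)-\mu_\alpha(w-u_1)\bigr)$, and for the elastic kernel \eqref{k} (where the analogue of $\mu_\alpha$ is $0$) this is $2\,\widehat q\cdot(v-\underline u)$, which vanishes identically on the sphere of diameter $[\underline u,w]$. On that sphere $k(v,w)=C_0|v-w|^{-1}$ with no decay at all, the sphere escapes to infinity with $w$, and $e^{a|v-w|}k(v,w)$ is unbounded there; so the pointwise absorption fails, and in fact $\sup_w\int_{|v|>R}k(v,w)e^{a(|v|-|w|)}\d v$ does \emph{not} tend to $0$, since for $|w|$ large the mass of $k(\cdot,w)$ genuinely sits at $|v|\sim|w|/2$. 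The uniform-in-$w$ tightness is true only because of the factor $\langle w\rangle^{-1}$ in $\widetilde\kappa$ (i.e.\ because the operators are only taken from $\mathcal Y$, not from $\mathcal X$), and proving it requires splitting the supremum as in \eqref{DP22r}--\eqref{DP22rr}: for $|w-u_1|\le r/2$ one uses $\overline K_\alpha^1(v,w)\le\overline C_\alpha|v-w|^{-1}\exp\bigl(2\beta_1(1+\mu_\alpha)[|w-u_1|^2-|v-u_1|^2]\bigr)$, which does give Gaussian decay in $|v-u_1|$ on $\{|v-u_1|>r\}$; for $|w-u_1|>r/2$ one must instead invoke the integrated Grad-type estimate $\int_{\R^3}\overline K_\alpha^1(v,w)m^{-1}(v)\d v\le K_\alpha m^{-1}(w)$ and let $\langle w\rangle^{-1}\le(1+r/2)^{-1}$ supply the smallness. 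Your sketch never exploits the $\langle w\rangle^{-1}$, so as written the tightness claim is unjustified (and its $w$-uniform version without that factor is false); you need to insert this two-region argument to close the proof.
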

\begin{proof} The fact that $\mathcal{K},\,\mathcal{K}_\alpha^1$  are bounded operator from $\mathcal{Y}$ to $\mathcal{X}$ comes from Prop. \ref{propoAlo}. We divide the proof of the compactness properties into several steps.\\

\noindent {\it First step: weak compactness of $\mathcal{K}_\alpha^1$.} We already notice that it is enough to prove that $\overline{\mathcal{K}}_\alpha^1\::\:\mathcal{Y} \to \mathcal{X}$ is weakly compact. Let $\d\nu(v)=m^{-1}\d v$ and let $\mathcal{B}=B_\mathcal{Y}$ be the unit ball of $\mathcal{Y}$. Since $\mathcal{X}=L^1(\R^{3},\d\nu)$, according to Dunford-Pettis Theorem, this amounts to prove that
\begin{equation}\label{DP1}
\sup_{h \in \mathcal{B}}\int_{A} \left|\overline{\mathcal{K}}_\alpha^1 h(v)\right|\d\nu(v) \longrightarrow 0 \quad \text{ as } \quad  \nu(A) \to 0
\end{equation}
and
\begin{equation}\label{DP2}
\sup_{h \in \mathcal{B}}\int_{|v-u_1| > r} \left|\overline{\mathcal{K}}_\alpha^1 h(v)\right|\d\nu(v) \longrightarrow 0 \quad \text{ as } \quad r \to \infty.
\end{equation}
Using the representation of $\overline{\mathcal{K}}_\alpha^1$ as an integral operator, it is easy to check that \eqref{DP1} and \eqref{DP2} will follow if one is able prove that
\begin{equation}\label{DP11}
\sup_{w \in \mathbb{R}^3}\dfrac{m(w)}{\langle w \rangle}\int_{A} \overline{K}_\alpha^1(v,w)m^{-1}(v)\d v \longrightarrow 0 \quad \text{ as } \quad \nu(A) \to 0
\end{equation}
and
\begin{equation}\label{DP22}
\sup_{w \in \mathbb{R}^3}\dfrac{m(w)}{\langle w \rangle}\int_{|v-u_1| > r} \overline{K}_\alpha^1(v,w)m^{-1}(v)\d v  \longrightarrow 0 \quad \text{ as } \quad r \to \infty.
\end{equation}
Let us prove \eqref{DP11}. Let $A \subset \mathbb{R}^3$ be a given Borel subset and let $w \in \mathbb{R}^3$ be fixed. Set $B_w=\{v \in \mathbb{R}^3\;,\;|v-w| < 1\}.$ Since $\overline{K}_\alpha^1(v,w) \leq \overline{C}_\alpha|v-w|^{-1}$ one has
\begin{multline*}
\int_{A} \overline{K}_\alpha^1(v,w)m^{-1}(v)\d v \leq \overline{C}_\alpha\int_A |v-w|^{-1}\d\nu(v)\\
=\overline{C}_\alpha\left(\int_{A \cap B_w} |v-w|^{-1}\d\nu(v)+\int_{A \cap B_w^c}|v-w|^{-1}\d\nu(v)\right).
\end{multline*}
Clearly
$$\int_{A \cap B_w^c}|v-w|^{-1}\d\nu(v) \leq \nu(A)$$
while, for any $p >1$, $1/q+1/p=1$, one has
\begin{multline*}
\int_{A \cap B_w} |v-w|^{-1}\d\nu(v) \leq \left(\int_{A \cap B_w} \d\nu(v)\right)^{1/q}\,\left(\int_{A \cap B_w} |v-w|^{-p}\exp(a|v|)\d v\right)^{1/p}\\
\leq \exp\left(\frac{a}{p}\left(|w|+1\right)\right)\nu(A)^{1/q}\left(\int_{B_w} |v-w|^{-p}\d v\right)^{1/p}
\end{multline*}
where we used that, $\exp(a|v|) \leq \exp(a|w|) \exp(a|v-w|)$ for any $v,w$. Choosing now $p >3$, one sees that
$$\left(\int_{B_w} |v-w|^{-p}\d v\right)^{1/p} < \infty$$
and is independent of $w$. Thus, there exists $C=C(\alpha,a,p)$ such that
$$\int_{A} \overline{K}_\alpha^1(v,w)m^{-1}(v)\d v \leq C\left(\exp\left(\tfrac{a}{p}|w|\right)\nu(A)^{1/q}+\nu(A)\right) \qquad \forall w \in \mathbb{R}^3.$$
Since $p >1$, this proves that \eqref{DP11} holds true. Let us now prove \eqref{DP22}. One first notice that
\begin{equation}\label{DP22r}
\sup_{|w-u_1| \leq r/2}\dfrac{m(w)}{\langle w \rangle}\int_{|v-u_1| > r} \overline{K}_\alpha^1(v,w)m^{-1}(v)\d v  \longrightarrow 0 \quad \text{ as } \quad r \to \infty.
\end{equation}
Indeed, one notices that
$$\overline{K}_\alpha^1(v,w) \leq \overline{C}_\alpha\,|v-w|^{-1} \exp\left(2\beta_1(1+\mu_\alpha)\left[|w-u_1|^2 -|v-u_1|^2\right]\right).$$
Therefore, if $|w-u_1| \leq r/2$ and $|v-u_1| > r$, one gets
$$\overline{K}_\alpha^1(v,w) \leq \dfrac{2\overline{C}_\alpha}{r} \exp\left(-\tfrac{3}{2}\beta_1(1+\mu_\alpha)|v-u_1|^2\right)$$
and \eqref{DP22r} follows easily since
$$\IR \exp\left(-\tfrac{3}{2}\beta_1(1+\mu_\alpha)|v-u_1|^2\right)\d\nu(v) < \infty.$$
Now, to prove \eqref{DP22}, it is enough to show that
\begin{equation}\label{DP22rr}
\sup_{|w-u_1| > r/2}\dfrac{m(w)}{\langle w \rangle}\int_{|v-u_1| > r} \overline{K}_\alpha^1(v,w)m^{-1}(v)\d v  \longrightarrow 0 \quad \text{ as } \quad r \to \infty.\end{equation}
Arguing as in \cite[Proposition A.1]{BCL} (with $s=1$), there exists $K=K_\alpha >0$ such that
$$\int_{\mathbb{R}^3} \overline{K}_\alpha^1(v,w)m^{-1}(v)\d v \leq K_\alpha m^{-1}(w).$$
Therefore, for any $r >0$,
$$\dfrac{m(w)}{\langle w \rangle}\int_{|v-u_1| > r} \overline{K}_\alpha^1(v,w)m^{-1}(v)\d v \leq   K_\alpha{\langle w \rangle}^{-1}$$
and \eqref{DP22rr} follows since  $ \sup_{|w-u_1| > r/2} {\langle w \rangle}^{-1} \to 0$ as $r \to \infty.$ This achieves to prove that $\overline{K}_\alpha^1\::\:\mathcal{Y} \to \mathcal{X}$ is weakly compact.\\

\noindent {\it Second step: weak compactness of $\mathcal{K}.$} Notice that $\mathcal{K}$ and $\overline{\mathcal{K}}_\alpha^1$ are two integral operators whose kernels, given respectively by \eqref{k} and \eqref{overK1}, are very similar. The above computations can then be reproduced \textit{mutatis mutandis} to get the  weak-compactness of $\mathcal{K}$.\\

\noindent {\it Third step: boundedness of $\mathcal{K}_\alpha^2$}. According to \cite[Theorem 12]{AlCaGa}, and since $m(v)=\exp(-a|v|^{s})$ with $s=1$, one has
$$\overline{\mathcal{K}}_\alpha^2=\Q_\alpha^+(\overline{\M},\cdot)\::\:\mathcal{X} \to \mathcal{X}$$
is bounded for any $0 < \alpha < 1$. 
Consequently, simple domination argument asserts that $\mathcal{K}_{\alpha}^{2} \in \mathcal{B}(\mathcal{X})$.\\

\noindent {\it Final step: weak compactness of $\mathcal{K}_\alpha^3$}. Recall that
$$\mathcal{K}_\alpha^3 h (v)=\mathbf{F}_\alpha(v)\IR h(w)|v-w|\d w.$$
Therefore,
$$|\mathcal{K}_\alpha^3 h (v)| \leq \langle v \rangle \mathbf{F}_\alpha(v) \IR |h(w)|\langle w \rangle \d w.$$
In particular, there exists $C >0$ such that
$$|\mathcal{K}_\alpha^3 h (v)| \leq \langle v \rangle \mathbf{F}_\alpha(v) \IR |h(w)|m^{-1}(w) \d w.$$
Since
$$\int_{\mathbb{R}^3} \mathbf{F}_\alpha(v)\langle v \rangle m^{-1}(v)\d v < \infty$$
this proves that $\mathcal{K}_\alpha^3\::\:\mathcal{X} \to \mathcal{X}$ is bounded and dominated by a one-rank operator. In particular, it is weakly compact.
\end{proof}

As a general consequence, one has the following 
\begin{theo}\label{theo:generation}
For any $\alpha \in (\alpha_{0},1)$, the unbounded operator $\mathscr{L}_{\alpha}$ is the generator of a $C_{0}$-semigroup $\left(\mathcal{S}_\alpha(t)\right)_{t \geq 0}$in $\mathcal{X}$.
\end{theo}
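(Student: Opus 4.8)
The plan is to exhibit $\mathscr{L}_\alpha$ as a bounded perturbation of a multiplication operator and then invoke the classical bounded perturbation theorem for generators. Recall from the computations above that, on the domain $\mathcal{Y}$,
$$\mathscr{L}_{\alpha}h=\mathcal{K}h+\mathcal{K}_{\alpha}^{1}h+\mathcal{K}^{2}_{\alpha}h-\nu_\alpha\, h -\mathcal{K}_{\alpha}^{3}h, \qquad \nu_\alpha:=\Sigma+\sigma_{\alpha}.$$
First I would isolate the multiplication part $\mathcal{Z}_\alpha h := -\nu_\alpha h$. Since $\Sigma$ and $\sigma_\alpha$ are nonnegative and satisfy $c\langle v\rangle \leq \nu_\alpha(v)\leq C\langle v\rangle$ (the lower bound being exactly the $\underline{\Sigma},\underline{\sigma_\alpha}>0$ of Section \ref{sec:spectr-elast}, the upper bound following from $|v-w|\leq\langle v\rangle\langle w\rangle$ and the uniform moment control on $\M$ and $\fe$), the operator $\mathcal{Z}_\alpha$ with domain $\{h\in\mathcal{X}:\nu_\alpha h\in\mathcal{X}\}=\mathcal{Y}$ generates the positive contraction $C_0$-semigroup $h\mapsto e^{-t\nu_\alpha}h$ on $\mathcal{X}=L^1(m^{-1})$; strong continuity and contractivity are immediate since $\nu_\alpha\geq 0$ is measurable and multiplication commutes with the weight $m^{-1}$.

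Second, I would observe that the four remaining operators are all bounded on $\mathcal{X}$, so that $\mathcal{A}_\alpha:=\mathcal{K}+\mathcal{K}_{\alpha}^{1}+\mathcal{K}^{2}_{\alpha}-\mathcal{K}_{\alpha}^{3}\in\mathscr{B}(\mathcal{X})$. Indeed $\mathcal{K}_\alpha^2\in\mathscr{B}(\mathcal{X})$ and $\mathcal{K}_\alpha^3\in\mathscr{B}(\mathcal{X})$ by the preceding Proposition, while for the two genuinely integral gain operators the kernel mass estimate established there, namely $\int_{\R^3}\overline{K}_\alpha^1(v,w)m^{-1}(v)\d v\leq K_\alpha\, m^{-1}(w)$, together with the domination $|\mathcal{K}_\alpha^1 h|\leq \mathcal{K}_\alpha^1|h|\leq\overline{\mathcal{K}}_\alpha^1|h|$, gives by Tonelli
$$\|\mathcal{K}_\alpha^1 h\|_{\mathcal{X}}\leq \int_{\R^3}|h(w)|\Big(\int_{\R^3}\overline{K}_\alpha^1(v,w)m^{-1}(v)\d v\Big)\d w\leq K_\alpha\|h\|_{\mathcal{X}};$$
the same computation applies to $\mathcal{K}$, whose kernel $k$ has the same structure. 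With $\mathcal{A}_\alpha$ bounded and $\mathcal{Z}_\alpha$ generating, the bounded perturbation theorem \cite[Thm.~III.1.3]{engel} yields that $\mathscr{L}_\alpha=\mathcal{Z}_\alpha+\mathcal{A}_\alpha$ generates a $C_0$-semigroup $(\mathcal{S}_\alpha(t))_{t\geq 0}$ on $\mathcal{X}$, with unchanged domain $\D(\mathscr{L}_\alpha)=\D(\mathcal{Z}_\alpha)=\mathcal{Y}$, as required.

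The delicate point—and where the whole difficulty is concentrated—is the $\mathcal{X}$-boundedness of $\mathcal{K}$ and $\mathcal{K}_\alpha^1$: Proposition \ref{propoAlo} only furnishes the weaker bound $\mathcal{Y}\to\mathcal{X}$, the extra velocity moment being needed to absorb the collision frequency $|v-w|$. What rescues the argument is that the weight $m^{-1}=e^{a|v|}$ is exponential with $a$ small, so that the sharp Gaussian factor in $\overline{K}_\alpha^1$ lets the polynomial collision-frequency growth be swallowed by $e^{a|w|}$; this is exactly the content of the kernel estimate quoted above. Should one prefer not to rely on that pointwise kernel bound, an alternative is to keep $\mathcal{K}+\mathcal{K}_\alpha^1$ as a \emph{positive} operator that is only relatively bounded with respect to $\mathcal{Z}_\alpha$ and to invoke the perturbation theory of positive semigroups on $L^1$ (Kato's theorem), the mass identity $\int_{\R^3}\mathscr{L}_\alpha h\,\d v=0$ supplying the required sub-conservativity; in that route the weak compactness of $\mathcal{K}$ and $\mathcal{K}_\alpha^1$ proved above is what guarantees that the generator of the resulting semigroup is exactly $\mathscr{L}_\alpha$ on $\mathcal{Y}$ rather than a proper extension.
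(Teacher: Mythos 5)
Your argument is correct, but it takes a genuinely different (and more elementary) route than the paper. The paper never claims that $\mathcal{K}$ and $\mathcal{K}_\alpha^1$ are bounded on $\mathcal{X}$: it only records that they are bounded and weakly compact as operators $\mathcal{Y}\to\mathcal{X}$, and then adds them to the multiplication operator $A_\alpha h=-(\Sigma+\sigma_\alpha)h$ by invoking Mokhtar-Kharroubi's $L^1$ generation theorem (a Desch--Kato--Voigt type result for positive, relatively bounded, weakly compact perturbations); only $\mathcal{K}_\alpha^2$ and $\mathcal{K}_\alpha^3$ are handled by the classical bounded perturbation theorem. You instead promote the Schur-type kernel bound $\int_{\R^3}\overline{K}_\alpha^1(v,w)m^{-1}(v)\,\d v\le K_\alpha m^{-1}(w)$ --- which the paper establishes only as an auxiliary step towards the Dunford--Pettis condition \eqref{DP22rr} --- into full $\mathcal{X}\to\mathcal{X}$ boundedness of $\mathcal{K}_\alpha^1$ (and, by the same Tonelli computation on the kernel \eqref{k}, of $\mathcal{K}$), so that the entire non-multiplicative part becomes a bounded perturbation. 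This is legitimate: the estimate is uniform in $w$ with no hidden $\langle w\rangle$ loss, because the angular averaging in the gain term compensates the linear growth of the hard-sphere collision kernel when the weight is $\exp(a|v|)$ (equivalently, $|v^\star|^2\le|v|^2+|w|^2$ and the sphere average of $m^{-1}(v^\star)$ gains a factor $|v-w|^{-1}$); this single estimate is the load-bearing point of your proof and is exactly where the difficulty sits, as you correctly identify. What each approach buys: yours is self-contained modulo that one kernel bound and avoids the Desch-type machinery entirely, with the domain $\D(\mathscr{L}_\alpha)=\mathcal{Y}$ coming for free from the bounded perturbation theorem; the paper's route is more robust (it would survive a moment loss in the kernel estimate, needing only the $\mathcal{Y}\to\mathcal{X}$ information), and the weak compactness it relies on is in any case the property exploited later in the semigroup decay argument. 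Your closing alternative --- treating $\mathcal{K}+\mathcal{K}_\alpha^1$ as a positive relatively bounded perturbation and using weak compactness to rule out a proper extension of the generator --- is essentially the paper's actual proof.
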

\begin{proof} One applies the recent version of Desch theorem for positive semigroups in $L^{1}$-spaces, see for instance \cite{MMK}. For any $\alpha \in (\alpha_{0},1)$, define  the multiplication operator:
$$A_\alpha\,h(v)=-(\sigma_\alpha(v) + \Sigma(v)) h(v), \qquad h \in \mathscr{D}(A_\alpha)=\mathcal{Y}$$
then, $A_{\alpha}$ is the generator of a positive $C_{0}$-semigroup $(U_\alpha(t))_{t \geq 0}$ in $\mathcal{X}$. Since $\mathcal{K}$ and $\mathcal{K}_{\alpha}^{1}$ are weakly compact, one has from \cite{MMK} that $B_{\alpha}=A_{\alpha}+\mathcal{K}+\mathcal{K}_{\alpha}^{1}$ is the generator of a positive $C_0$-semigroup $(V_\alpha(t))_{t \geq 0}$. Finally, since $\mathcal{K}_{\alpha}^{2}$ and $\mathcal{K}_{\alpha}^{3}$ are bounded operators, one gets that 
$$\mathscr{L}_{\alpha}=B_{\alpha}+\mathcal{K}_{\alpha}^{2}-\mathcal{K}_{\alpha}^{3}$$
is the generator of a $C_{0}$-semigroup in $\mathcal{X}$.
 \end{proof}

\end{document}